\pdfoutput=1

\documentclass{amsart}
\usepackage[utf8]{inputenc}
\usepackage[T1]{fontenc}

\usepackage{amssymb}
\usepackage{braket}
\usepackage{mathrsfs}
\usepackage{ifthen}
\usepackage{here}
\usepackage{todonotes}
\usepackage{tikz}
\usetikzlibrary{patterns,decorations.pathreplacing,calligraphy}
\usepackage{comment} 
\usepackage{mleftright}
\usepackage[pagebackref,hypertexnames=false]{hyperref} 
\usepackage{cleveref}
 \usepackage[all]{xy}
 \usepackage{amscd}
 \usepackage[alphabetic,backrefs,msc-links]{amsrefs}
 \usepackage{color}
 \usepackage{enumitem}
\newlist{steps}{enumerate}{1}
\setlist[steps, 1]{label = Step \arabic*:}
\usepackage[abs]{overpic}
\usepackage{tikz-cd}
\usepackage{pinlabel}
\usepackage{amsmath, amsthm,verbatim,amsfonts, graphicx}

\usepackage{geometry}
\geometry{verbose,tmargin=3cm,bmargin=3cm,lmargin=2.5cm,rmargin=2.5cm,footskip=1cm}

\makeatletter
\DeclareRobustCommand\widecheck[1]{{\mathpalette\@widecheck{#1}}}
\def\@widecheck#1#2{%
   \setbox\z@\hbox{\m@th$#1#2$}%
   \setbox\tw@\hbox{\m@th$#1%
      {%
         \vrule\@width\z@\@height\ht\z@
         \vrule\@height\z@\@width\wd\z@}$}%
   \dp\tw@-\ht\z@
   \@tempdima\ht\z@ \advance\@tempdima2\ht\tw@ \divide\@tempdima\thr@@
   \setbox\tw@\hbox{%
      \raise\@tempdima\hbox{\scalebox{1}[-1]{\lower\@tempdima\box\tw@}}}%
   {\ooalign{\box\tw@ \cr \box\z@}}}
\makeatother

\theoremstyle{plain}
\newtheorem{thm}{Theorem}[section]
\crefname{thm}{Theorem}{Theorems}
\Crefname{thm}{Theorem}{Theorems}
\newtheorem{prop}[thm]{Proposition}
\crefname{prop}{Proposition}{Propositions}
\Crefname{prop}{Proposition}{Propositions}
\newtheorem{lem}[thm]{Lemma}
\crefname{lem}{Lemma}{Lemmas}
\Crefname{lem}{Lemma}{Lemmas}
\newtheorem{cor}[thm]{Corollary}
\crefname{cor}{Corollary}{Corollaries}
\Crefname{cor}{Corollary}{Corollaries}
\newtheorem{rem}[thm]{Remark}
\crefname{rem}{Remark}{Remarks}
\Crefname{rem}{Remark}{Remarks}

\crefname{claim}{Claim}{Claims}
\Crefname{claim}{Claim}{Claims}

\crefname{property}{Property}{Properties}
\Crefname{property}{Property}{Properties}

\crefname{problem}{Problem}{Problems}
\Crefname{problem}{Problem}{Problems}

\crefname{conjecture}{Conjecture}{Conjecture}
\Crefname{conjecture}{Conjecture}{Conjecture}

\theoremstyle{definition}
\newtheorem{defn}[thm]{Definition}
\crefname{defn}{Definition}{Definitions}
\Crefname{defn}{Definition}{Definitions}

\crefname{notation}{Notation}{Notations}
\Crefname{notation}{Notation}{Notations}

\crefname{convention}{Convention}{Conventions}
\Crefname{convention}{Convention}{Conventions}
\crefname{cond}{Condition}{Conditions}
\Crefname{cond}{Condition}{Conditions}

\crefname{assum}{Assumption}{Assumptions}
\Crefname{assum}{Assumption}{Assumptions}


\theoremstyle{remark}
\crefname{rem}{Remark}{Remarks}
\Crefname{rem}{Remark}{Remarks}

\crefname{section}{Section}{Sections}
\Crefname{section}{Section}{Sections}
\crefname{subsection}{Subsection}{Subsections}
\Crefname{subsection}{Subsection}{Subsections}
\crefname{figure}{Figure}{Figures}
\Crefname{figure}{Figure}{Figures}

\newcommand{\Z}{\mathbb{Z}}

\newcommand{\pt}{\mathrm{pt}}

\newcommand{\fraks}{\mathfrak{s}}
\newcommand{\frakt}{\mathfrak{t}}

\newcommand{\inc}{\hookrightarrow}

\newcommand{\C}{\mathbb{C}}

\newcommand{\pr}{\text{pr}}

\newcommand{\R}{\mathbb R}

\newcommand{\F}{\mathbb{F}_2}

\newcommand{\Th}{\mathrm{Th}}

\def\dim{\mathrm{dim}}
\def\rank{\mathrm{rank}}

\def\id{\mathrm{Id}}

\def\ind{\mathrm{ind}}

\def\grad{\mathrm{grad}}

\newcommand{\mbar}[1]{{\ooalign{\hfil#1\hfil\crcr\raise.167ex\hbox{--}}}}

\def\wt{\widetilde}

\def\Ker{\mathrm{Ker}\,}

     \RequirePackage{rotating}                   
    \def\HMt{%
       \setbox0=\hbox{$\widehat{\mathit{HM}}$}
       \setbox1=\hbox{$\mathit{HM}$}
       \dimen0=1.1\ht0
       \advance\dimen0 by 1.17\ht1
       \smash{\mskip2mu\raise\dimen0\rlap{%
          \begin{turn}{180}
              {$\widehat{\phantom{\mathit{HM}}}$}
           \end{turn}} \mskip-2mu    
                \mathit{HM}
                    }{\vphantom{\widehat{\mathit{HM}}}}{}}

\title{
Exotic Dehn twists and homotopy coherent group actions
}

\author{Sungkyung Kang}
\address{Mathematical Institute, University of Oxford, United Kingdom}
\email{sungkyung38@icloud.com}

\author{JungHwan Park}
\address{Department of Mathematical Sciences, KAIST, Republic of Korea}
\email{jungpark0817@kaist.ac.kr}

\author{Masaki Taniguchi} 
\address{Department of Mathematics, Kyoto University, Japan}
\email{taniguchi.masaki.7m@kyoto-u.ac.jp}

\begin{document}

\maketitle

\begin{abstract}
We consider the question of extending a smooth homotopy coherent finite cyclic group action on the boundary of a smooth 4-manifold to its interior. As a result, we prove that Dehn twists along any Seifert homology sphere, except the 3-sphere, on their simply connected positive-definite fillings are infinite order exotic.
\end{abstract}

\section{Introduction}
Dehn twists are fundamental in the study of mapping class groups of surfaces. In this article, we study their generalization in dimension 4. Let $Y$ be a closed oriented 3-manifold with a nontrivial element $\phi \in \pi_1 (\mathrm{Diff}^+(Y))$, based at the identity. Then $\phi$ induces a self-diffeomorphism of $Y \times [0,1] $,
$$ \Phi \colon  Y \times [0,1] \to Y\times [0,1] ; \qquad (s,t) \mapsto (\phi_t(s),t).$$
Given a smooth embedding of $Y \times [0,1]$ into a compact smooth 4-manifold $X$, the self-diffeomorphism $\Phi$ can be extended to a self-diffeomorphism of $X$. The resulting diffeomorphism, $t_X \colon X \to X$, is called a \emph{4-dimensional Dehn twist on $X$ along $Y$}.  Moreover, if $Y$ is the boundary of $X$, then we have a Dehn twist supported in a collar neighborhood of the boundary, and we call such a diffeomorphism a \emph{4-dimensional boundary Dehn twist}. While Dehn twists can be considered in any dimension, we will henceforth omit the dimension, as this article focuses solely on 4-dimensional Dehn twists.

The first nontrivial Dehn twist can be obtained from the work of Baraglia and Konno~\cite{Baraglia-Konno:2022-1}. Their results imply that the boundary Dehn twist on $\mathrm{K3} \smallsetminus \mathring{B}^4$ is not isotopic to the identity rel.\ boundary (see \cite[Section 2]{Kronheimer-Mrowka:2020-1} for more detailed explanations). Furthermore, Kronheimer and Mrowka~\cite[Theorem 1.1]{Kronheimer-Mrowka:2020-1} proved that the Dehn twist on $\mathrm{K3} \# \mathrm{K3}$ along the separating $S^3$ is not isotopic to the identity. Lin~\cite{Lin:2023-1} extended this result and proved that it is not smoothly isotopic to the identity even after a stabilization with $S^2 \times S^2$. These results are obtained by using a one-parameter family of Bauer-Furuta invariants and also the $\mathrm{Pin(2)}$-equivariant version of it.

While the above results are restricted to the case when $Y = S^3$, the general case when $Y \neq S^3$ has garnered significant attention recently (see, e.g., \cite[Section 1.2]{Orson-Powell:2022-1} and \cite{KMT23}). In \cite{KMT23}, the authors study Dehn twists and boundary Dehn twists along Brieskorn spheres using 2-parameter families of Seiberg-Witten theory. Here, the Dehn twist along a Brieskorn sphere $Y$ is induced by the $S^1$-action, which represents a nontrivial element in $\pi_1(\mathrm{Diff}^+(Y))$ (see \cite[Proposition 8.8]{Orson-Powell:2022-1}). They prove that for each nonnegative integer $n$, the boundary Dehn twists on any smooth, compact, simply connected, positive-definite 4-manifold bounded by $\Sigma(2,3,6n+7)$ are not isotopic to the identity rel.\  boundary~\cite[Theorem 1.1]{KMT23}. In particular, since $\Sigma(2,3,13)$ and $\Sigma(2,3,25)$ bound smooth contractible 4-manifolds~\cite{Akbulut-Kirby:1979, Fickle:1984}, the boundary Dehn twists on these contractible smooth 4-manifolds are also not isotopic to the identity rel.\ boundary~\cite[Theorem 1.3]{KMT23} (they also obtain results related to Dehn twists after a stabilization of $S^2 \times S^2$~\cite[Theorem 1.7 and Theorem 1.8]{KMT23} and Dehn twists on closed 4-manifolds~\cite[Theorem 1.10]{KMT23}).

Here, we recall an important property of Dehn twists along $S^3$, which provides the main motivation for this article. Since $\pi_1(\mathrm{Diff}^+(S^3)) \cong \pi_1(\mathrm{SO}(4)) \cong   \mathbb{Z}_2 $, we see that each Dehn twist along $S^3$ has at most order two in the mapping class group. In contrast, for general 3-manifolds, such as Seifert fibered spaces, we may expect them to induce \emph{infinite order} Dehn twists. In this article, we demonstrate that such infinite order Dehn twists exist and can be found in any positive-definite filling of Brieskorn homology spheres:
\begin{thm}\label{thm:main}
    Let $Y \neq S^3$ be a Brieskorn homology sphere, and let $X$ be a smooth, compact, positive-definite 4-manifold filling of $Y$ with $b_1(X) = 0$. Then, the boundary Dehn twist along $Y$  has infinite order in $\pi_0(\mathrm{Diff}^+(X,\partial X))$.\end{thm}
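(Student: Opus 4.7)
The plan is to argue by contradiction: assume that $t_X^n$ is smoothly isotopic to the identity rel boundary for some $n \geq 1$ and derive an obstruction via equivariant Seiberg--Witten theory. The first step is to rephrase the finite-order hypothesis as an extension problem for a smooth homotopy coherent group action. The boundary Dehn twist $t_X$ is induced by the loop $\phi \colon S^1 \to \Diff^+(Y)$ coming from the Seifert $S^1$-action on the Brieskorn sphere $Y$, so $t_X^n$ corresponds to the $n$-fold concatenation $\phi^n \in \pi_1(\Diff^+(Y))$. Via the fibration $\Diff^+(X,\partial X) \to \Diff^+(X) \to \Diff^+(Y)$, an isotopy $t_X^n \sim \id_X$ rel boundary is exactly the data needed to extend $\phi^n$ to a loop in $\Diff^+(X)$; combined with the Seifert $\Z/n$-action on $Y$, this loop should assemble into a smooth homotopy coherent $\Z/n$-action on $X$ extending the genuine Seifert $\Z/n$-action on $\partial X = Y$. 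This reduction is the bridge from the mapping class group question to the equivariant extension problem that is the subject of this paper.

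Next I would feed such an extension into equivariant gauge theory. Under the hypotheses that $X$ is simply connected, positive-definite, and $b_1(X) = 0$, the ordinary Bauer--Furuta construction on $X$ produces a well-behaved stable cohomotopy class with an essentially unique $\mathrm{spin}^c$ structure up to conjugation. A homotopy coherent $\Z/n$-action on $X$ should then upgrade this class to a $\Z/n$-equivariant stable map whose source is the $\Z/n$-equivariant Seiberg--Witten Floer homotopy type of $Y$ (for the restricted Seifert $\Z/n$-action) and whose target is an equivariant Thom spectrum built from the positive-definite intersection form of $X$. The existence of such a $\Z/n$-equivariant refinement is highly constrained and yields $\Z/n$-equivariant Fr\o yshov-type inequalities on the topology of the filling.

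The contradiction then comes from comparing these equivariant constraints against the actual topology of $X$. For a simply connected, positive-definite filling $X$, the intersection-form data entering the Bauer--Furuta map is a fixed finite object independent of $n$, whereas for the Brieskorn sphere $Y \neq S^3$ the $\Z/n$-equivariant Seiberg--Witten Floer homotopy type carries nontrivial equivariant content reflecting the Seifert structure. One expects this discrepancy to obstruct the equivariant extension for every $n \geq 1$, so that no finite iterate of $t_X$ can be isotopic to the identity rel boundary and $t_X$ has infinite order in $\pi_0(\Diff^+(X,\partial X))$.

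The hardest step, as I see it, is the equivariant gauge-theoretic input itself. A homotopy coherent $\Z/n$-action does not come with honest fixed-point geometry on $X$, so one must set up equivariant Bauer--Furuta invariants and equivariant Seiberg--Witten Floer homotopy types that accept coherent rather than strict actions, and prove that they restrict functorially from the filling $X$ to the genuine Seifert action on $Y$. Equally delicate is extracting from these invariants obstructions that are sharp enough to rule out the extension for every $n$, every simply connected positive-definite filling, and every Seifert $Y \neq S^3$, presumably by leveraging the very concrete Seiberg--Witten Floer structure of Seifert fibered spaces. This homotopy coherent equivariant framework, together with its interaction with the gauge-theoretic invariants of Seifert manifolds, is what I expect to be the technical heart of the paper.
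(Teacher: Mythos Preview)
Your high-level strategy is exactly the paper's: assume finite order, convert the isotopy into a homotopy coherent finite cyclic action on $X$ extending the Seifert action on $Y$, feed this into a families/equivariant Bauer--Furuta construction, and extract an equivariant Fr\o yshov-type inequality that contradicts known computations for Brieskorn spheres. So the architecture is right.

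There is, however, a genuine gap in the choice of group. You propose to work with $\Z/n$, where $n$ is the putative order of $t_X$. The paper does \emph{not} do this, and for good reason. The equivariant Fr\o yshov invariants $\delta^{(p)}_j$ and the Baraglia--Hekmati computations are set up for $\Z_p$ with $p$ an odd prime, and the formula
\[
\delta^{(p)}_\infty(Y)-\delta(Y)=\rank HF^{\mathrm{red}}(Y)-\rank HF^{\mathrm{red}}(Y/\Z_p;\frakt_0)
\]
requires $p$ coprime to $a_1\cdots a_r$ so that the $\Z_p$-action on $Y=\Sigma(a_1,\dots,a_r)$ is free. Your $n$ need not be prime and need not be coprime to the Seifert data, so neither the theory nor the computations are available as stated. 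The paper's fix is a small but essential trick (its Remark after the extension lemma): if $t_X^d\simeq\id$ rel boundary and $p$ is any prime with $\gcd(p,d)=1$, replace $d$ by a multiple $d'\equiv 1\pmod p$ and redo the $p$-th-root construction to obtain a homotopy coherent $\Z_p$-action on $X$ extending the Seifert $\Z_p$-action on $Y$. One then chooses $p$ large enough to also avoid $a_1\cdots a_r$. This freedom to pick $p$ is what makes the Baraglia--Hekmati input applicable.

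Two smaller points. First, the hypothesis is $b_1(X)=0$, not simply connected; your second paragraph silently strengthens this. Second, ``essentially unique $\mathrm{spin}^c$ structure'' is not right when $b_2(X)>0$: the paper needs Elkies' theorem to find a $\mathrm{spin}^c$ structure $\fraks$ with $c_1(\fraks)^2\ge -b_2(X)$, which turns the Fr\o yshov inequality
\[
\delta^{(p)}_j(Y)+\tfrac{c_1(\fraks)^2-\sigma(X)}{8}\le 0
\]
into $\delta^{(p)}_j(Y)\le 0$. The contradiction is then the specific numerical one $\delta^{(p)}_\infty(Y)>0$ (since $\delta(Y)=0$ here, using both the positive-definite filling and the negative-definite resolution), not a vague ``discrepancy''. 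Your last paragraph correctly identifies the analytic heart of the paper, but the missing $\Z/n\to\Z_p$ reduction is the idea you would need to make the argument go through.
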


We note that such phenomena cannot be expected for arbitrary smooth fillings of Brieskorn spheres. Indeed, for each Brieskorn sphere, the boundary Dehn twist on its canonical negative-definite resolution is smoothly isotopic to the identity rel.\ boundary~\cite{Orlik-Wagreich:1971-1, Orlik:1972-1}.

Beyond $\Sigma(2,3,13)$ and $\Sigma(2,3,25)$, there are many other  Brieskorn spheres that bound smooth positive-definite 4-manifolds.  For instance, $\Sigma(p, ps \pm 1, ps \pm 2)$ when $p$ is odd, and $\Sigma(p, ps - 1, ps + 1)$ when $p$ is even and $s$ is odd, bound contractible 4-manifolds (see, e.g., \cite{Akbulut-Kirby:1979, Casson-Harer:1981, Fickle:1984} for more examples). Moreover, there are Brieskorn spheres that bound smooth rational homology balls even when they do not bound any smooth contractible 4-manifold, with $\Sigma(2,3,7)$  being the most well-known such example (see, e.g., \cite{Fintushel-Stern:1984, Akbulut-Larson:2018, Oguz:2020} for more examples). In a similar vein, for each positive integer $n$, $\Sigma(2,3,6n+1)$ also bounds a smooth positive-definite 4-manifold, as it can be realized as a $+1$-Dehn surgery on twist knots.

The topological mapping class group of every compact, simply connected, topological 4-manifold has been computed by Orson and Powell \cite{Orson-Powell:2022-1}. In the special case where the compact, simply connected, topological 4-manifold is bounded by a rational homology $S^3$, they show that the topological isotopy type of an orientation-preserving self-homeomorphism fixing the boundary is determined by its induced action on the intersection form (see \cite[Theorem C]{Orson-Powell:2022-1} for the precise statements). Combined with our main theorem we have the following corollary. Here, given a smooth 4-manifolds $X$, we say a diffeomorphism $f \in \mathrm{Diff}^+(X, \partial X)$ is \emph{exotic} if $f$ is isotopic to the identity rel.\ boundary through elements of $\mathrm{Homeo}^+(X, \partial X)$.

\begin{cor}\label{cor:main}
    Let $Y \neq S^3$ be a Brieskorn homology sphere, and let $X$ be a smooth, compact, positive-definite 4-manifold filling of $Y$ with $\pi_1(X) = 0$. Then, the boundary Dehn twist along $Y$ is exotic and has infinite order in $\pi_0(\mathrm{Diff}^+(X,\partial X))$.\qed
\end{cor}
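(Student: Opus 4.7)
The plan is to combine \Cref{thm:main} with Orson--Powell's topological classification. The hypothesis $\pi_1(X) = 0$ implies $b_1(X) = 0$, so \Cref{thm:main} directly yields that the boundary Dehn twist $t_X$ along $Y$ has infinite order in $\pi_0(\mathrm{Diff}^+(X, \partial X))$. It therefore suffices to show $t_X$ is topologically isotopic to the identity rel boundary.

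For the topological triviality I would invoke Orson--Powell's Theorem C, whose hypotheses are met here: $X$ is compact and simply connected, and $\partial X = Y$ is an integer homology sphere and hence a rational homology sphere. Their result then asserts that the topological isotopy class of any orientation-preserving self-homeomorphism of $X$ fixing the boundary is determined by its induced action on the intersection form of $X$.

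It therefore remains to check that $t_X$ acts as the identity on $H_2(X; \mathbb{Z})$. By the construction recalled in the introduction, $t_X$ is supported in a collar neighborhood $Y \times [0,1]$ of $\partial X$, so it restricts to the identity on $X \smallsetminus (Y \times [0,1))$. The inclusion $X \smallsetminus (Y \times [0,1)) \hookrightarrow X$ is a homotopy equivalence (collar deformation retract), so every class in $H_2(X; \mathbb{Z})$ can be represented by a cycle disjoint from the support of $t_X$. Hence $t_X$ acts trivially on $H_2(X; \mathbb{Z})$, and Orson--Powell's theorem produces a topological isotopy from $t_X$ to $\mathrm{id}_X$ rel $\partial X$. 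Combined with the infinite order statement above, this shows $t_X$ is exotic and has infinite order in $\pi_0(\mathrm{Diff}^+(X, \partial X))$.

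The corollary is a direct assembly of \Cref{thm:main} with the already cited theorem of Orson--Powell, so there is no substantive obstacle; the only content is verifying that the action on the intersection form is trivial, which is immediate from the localization of the Dehn twist in a collar.
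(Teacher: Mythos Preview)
Your proposal is correct and follows essentially the same approach as the paper: the corollary is stated with a \qed immediately after the paragraph invoking Orson--Powell's Theorem~C combined with \Cref{thm:main}, and your argument merely spells out the implicit verification that the boundary Dehn twist acts trivially on the intersection form. There is nothing to add or correct.
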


The study of exotic diffeomorphisms was initiated by Ruberman in \cite{Rumberman:1998-1, Rumberman:1999-1}, where he demonstrated the first examples. Recently, this topic has been studied more extensively, leading to many interesting related results \cite{BK18,  Kronheimer-Mrowka:2020-1, IKMT22, Lin:2023-1, KMT23, KM24, KMPW24}. It turns out that among exotic diffeomorphisms, those induced by Dehn twists have the special property of producing loops of homeomorphisms that cannot be extended to loops of diffeomorphisms. To be more precise, given a closed oriented 3-manifold $Y$ bounding a smooth 4-manifold $X$, there is the following fiber sequence:
$$\mathrm{Diff}^+(X, \partial X) \hookrightarrow \mathrm{Diff}^+(X)\to \mathrm{Diff}^+(Y),$$
as well as an analogous fiber sequence for homeomorphism groups (see \cite[Appendix A]{Orson-Powell:2022-1} and \cite[Section 6]{KMT23}). If a boundary Dehn twist on $X$ along $Y$, induced by a nontrivial element $\phi \in \pi_1 (\mathrm{Diff}^+(Y))$, is exotic, then $\phi$ lifts to $\pi_1 (\mathrm{Homeo}^+(X))$ but not to $\pi_1 (\mathrm{Diff}^+(X))$  (see \cite[Section 6]{KMT23} for details). In particular, this implies that if there exists an exotic boundary Dehn twist on $X$ along $Y$, then the inclusion map $\mathrm{Diff}^+(X) \hookrightarrow \mathrm{Homeo}^+(X)$ induces a \emph{non-surjective} map on their fundamental groups. In particular, the results of \cite{Baraglia-Konno:2022-1, Kronheimer-Mrowka:2020-1, Lin:2023-1, KMT23} imply that the corresponding maps for $\mathrm{K3} \smallsetminus \mathring{B}^4$ and $\left(\mathrm{K3} \# S^2 \times S^2\right) \smallsetminus \mathring{B}^4$ are non-surjective, and the same holds for any smooth contractible 4-manifolds bounded by $\Sigma(2,3,13)$ and $\Sigma(2,3,25)$. For closed 4-manifolds, the only known such example is provided by \cite[Theorem 1.3]{Baraglia-Konno:2023-1}, which proved that the corresponding map is non-surjective for $\mathrm{K3}$. 

In this setting, \Cref{cor:main}  implies that for smooth, compact, simply connected, positive-definite 4-manifolds bounded by any Brieskorn homology sphere, the map induced by the inclusion is not only non-surjective but also has infinitely many elements that are not contained in its image:

\begin{cor}\label{cor:cokernel}
    Let $Y \neq S^3$ be a Brieskorn homology sphere, and let $X$ be a smooth, compact, simply connected, positive-definite filling of $Y$. If $i_*$ is the map induced by the inclusion $\mathrm{Diff}^+(X) \hookrightarrow \mathrm{Homeo}^+(X)$ on their fundamental groups, then the $\mathrm{Coker}\, 
     i_*$ contains a subgroup isomorphic to $\mathbb{Z}$. \qed
\end{cor}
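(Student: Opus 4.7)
The plan is to upgrade the non-surjectivity argument sketched in the introduction, which produces a single element of $\pi_1(\mathrm{Homeo}^+(X))$ outside $\mathrm{Im}(i_*)$, into an infinite family of such elements indexed by $n \in \mathbb{Z}$. The essential inputs are: (i) $t_X$ has infinite order in $\pi_0(\mathrm{Diff}^+(X,\partial X))$ by \Cref{thm:main}; (ii) every power $t_X^n$ is again exotic, since concatenating the topological isotopy from $t_X$ to the identity rel boundary $n$ times yields one from $t_X^n$ to the identity; and (iii) the inclusion induces an injection $j_*\colon \pi_1(\mathrm{Diff}^+(Y)) \hookrightarrow \pi_1(\mathrm{Homeo}^+(Y))$ for Brieskorn homology spheres~$Y$.

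Concretely, let $\delta_D\colon \pi_1(\mathrm{Diff}^+(Y)) \to \pi_0(\mathrm{Diff}^+(X,\partial X))$ and $\delta_H \colon \pi_1(\mathrm{Homeo}^+(Y)) \to \pi_0(\mathrm{Homeo}^+(X,\partial X))$ be the connecting homomorphisms of the two fiber sequences recalled in the introduction; let $q$ and $p$ be the restriction-induced maps on $\pi_1(\mathrm{Diff}^+(X))$ and $\pi_1(\mathrm{Homeo}^+(X))$; and write $j_*$ and $k_*$ for the vertical inclusion-induced maps on $\pi_1(\mathrm{Diff}^+(Y))$ and on $\pi_0(\mathrm{Diff}^+(X,\partial X))$ respectively. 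These assemble into the evident commutative ladder relating the two long exact sequences. By construction $t_X^n = \delta_D(\phi^n)$, where $\phi \in \pi_1(\mathrm{Diff}^+(Y))$ is the class inducing $t_X$. Property~(ii) gives $k_*(t_X^n) = 0$, so by exactness of the topological row $j_*(\phi^n)$ lifts to an element of $\pi_1(\mathrm{Homeo}^+(X))$. Fix once and for all a lift $\tilde\phi$ of $j_*(\phi)$; since $p$ is a homomorphism, $\tilde\phi^n$ is automatically a lift of $j_*(\phi^n)$ for every $n$.

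The heart of the proof is to show that $\tilde\phi^n \notin \mathrm{Im}(i_*)$ for every $n \neq 0$. Suppose for contradiction that $\tilde\phi^n = i_*(\psi)$ for some $\psi \in \pi_1(\mathrm{Diff}^+(X))$. Commutativity of the ladder yields $j_*(q(\psi)) = p(\tilde\phi^n) = j_*(\phi^n)$ in $\pi_1(\mathrm{Homeo}^+(Y))$. Injectivity of $j_*$ (property~(iii)) then forces $q(\psi) = \phi^n$, so $\phi^n \in \mathrm{Im}(q)$, whence exactness of the smooth row gives $t_X^n = \delta_D(\phi^n) = 0$ in $\pi_0(\mathrm{Diff}^+(X,\partial X))$, contradicting \Cref{thm:main} for $n \neq 0$. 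Consequently $\langle \tilde\phi\rangle \cap \mathrm{Im}(i_*) = 0$ and, in particular, $\tilde\phi$ has infinite order in $\pi_1(\mathrm{Homeo}^+(X))$, so the image of $\langle \tilde\phi\rangle$ in $\mathrm{Coker}\, i_*$ is a subgroup isomorphic to $\mathbb{Z}$.

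The main obstacle — really the only ingredient not already established within this paper — is property~(iii). For Brieskorn homology spheres this follows from the Smale conjecture in the Seifert fibered setting: $Y$ is Seifert fibered and geometric, modelled on $S^3$ for $\Sigma(2,3,5)$ and on $\widetilde{\mathrm{SL}}_2(\mathbb{R})$ otherwise, and the inclusion $\mathrm{Diff}^+(Y) \hookrightarrow \mathrm{Homeo}^+(Y)$ is in fact a weak homotopy equivalence by work on Seifert fibered spaces and, in full generality, by Bamler--Kleiner's resolution of the Generalized Smale Conjecture for closed 3-manifolds.
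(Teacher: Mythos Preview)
Your argument is correct and is precisely the way to unpack the paper's \qed: the paper does not give a proof beyond the paragraph preceding the corollary (which handles one element) together with \Cref{cor:main} and the reference to \cite[Section~6]{KMT23}, and your ladder argument with the connecting maps $\delta_D,\delta_H$ is exactly the intended expansion.

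One point worth sharpening is your justification of~(iii). Bamler--Kleiner's Generalized Smale Conjecture concerns the comparison $\mathrm{Isom}(Y)\hookrightarrow \mathrm{Diff}(Y)$, not $\mathrm{Diff}(Y)\hookrightarrow \mathrm{Homeo}(Y)$. The statement you actually need---that the inclusion $\mathrm{Diff}^+(Y)\hookrightarrow \mathrm{Homeo}^+(Y)$ is a weak homotopy equivalence for closed $3$-manifolds---is older and follows from smoothing theory in dimension~$3$ (Cerf's $\Gamma_4=0$ together with Hatcher's work showing the space of smoothings is contractible). Either reference suffices to give the injectivity of $j_\ast$ that your contradiction step requires, so the logic is intact; only the attribution should be adjusted.
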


Before we lay out the whole strategy of our proof,  we briefly highlight the essence of our result. Specifically, our main theorem is proved by showing that for each positive-definite filling with vanishing $b_1$ of a Brieskorn homology sphere, there is a large enough prime $p$ such that there is no homotopy coherent $\mathbb{Z}_p$-action on the smooth positive-definite filling  that extends the $\mathbb{Z}_p$-action on the boundary, where the boundary action is induced by the $S^1$-action  (see \Cref{sec:homotopycoherentactions,Dehn twists and group actions} for a more detailed explanation of the relationship between homotopy coherent actions and Dehn twists). In particular, it implies that for each positive-definite filling with vanishing $b_1$, there is no $\mathbb{Z}_p$-action that extends the $\mathbb{Z}_p$-action on the boundary, as long as $p$ is large enough. We note that this phenomenon, and indeed more general results regarding the extendability of $\mathbb{Z}_p$-actions, was observed earlier. Anvari and Hambleton~\cite{Anvari-Hambleton:2016-1, Anvari-Hambleton:2021-1} proved that the $\mathbb{Z}_p$-action induced by the $S^1$-action on $\Sigma(a_1,a_2,a_3)$ do not extend to any smooth contractible filling (see also \cite{Kwasik-Lawson:1993-1}). Baraglia and Hekmati~\cite{Baraglia-Hekmati:2022-1, Baraglia-Hekmati:2024-1} extended this result to smooth positive-definite fillings with vanishing $b_1$ of $\Sigma(a_1, a_2, \ldots, a_r)$ under mild assumptions, implying that for a fixed Brieskorn homology sphere, the nonextension result holds for all but finitely many primes (see \cite[Corollary 1.7 and Corollary 1.14]{Baraglia-Hekmati:2022-1} for precise statements). In fact, the work of Baraglia and Hekmati is crucially used in our proof, as we will see below.

Here we summarize the strategy of proof: 
\begin{itemize}
    \item Assuming that the (some finite power of) Dehn twist is smoothly isotopic to the identity rel.\ boundary, we prove that for a sufficiently large prime number \(p\), there is a homotopy coherent action extending an honest Seifert \(\mathbb{Z}_p\)-action on the boundary. Moreover, we can get a family 
\[
(X, \fraks)\to {E} \to B \Z_p
\]
of spin$^c$ 4-manifolds with boundary whose restriction to the boundary is $\Z_p$-Borel construction of Seifert action. This is the main geometric construction in this paper, which is done in \cref{sec:homotopycoherentactions} and \cref{Dehn twists and group actions}. 
\item Next, we develop Bauer–Furuta invariants for the family ${E}$ over $B \Z_p$, yielding a map
\[
{\bf BF}_{E}: Th^f(F_0)  \to Th^f(F_1) \wedge_{B\Z_p} SWF(Y)\times_{\Z_p} E \Z_p
\]
for some $S^1$-equivariant vector bundles $F_0$ and $F_1$ over $B\Z_p$, where $\wedge_{B \Z_p}$ denotes the fiberwise wedge product, $Th^f$ represents the fiberwise Thom space over $B \Z_p$, and $SWF(Y)$ is to Baraglia–Hekmati's $(S^1 \times \Z_p)$-equivariant Seiberg–Witten Floer homotopy type, based on Manolescu’s Seiberg–Witten Floer homotopy type \cite{Ma03}. To be more precise, we need to restrict the family $E $ to a finite skeleton of $B\Z_p$ for the compactness of Seiberg--Witten moduli spaces.
This invariant is constructed in \cref{Bauer--Furuta invariant for homotopy coherent finite group actions}. 

\item 
Applying cohomlogical argument to ${\bf BF}_{E}$, we obtain the family Fr\o yshov inequality: 
\[
\frac{1}{8} \left( c_1^2(\fraks) - b_2(X) \right) \geq \delta^{(p)}_\infty
(Y), 
\]
where $X$ is a smooth, compact, positive-definite filling of $Y$ with $b_1(X) = 0$, and  $\delta^{(p)}_\infty$ is the equivariant delta invariant defined by Baraglia and Hekmati in \cite{Baraglia-Hekmati:2024-1}. 
This Fr\o yshov type inequality will be proven in \cref{Fryshov type inequality from homotopy coherent action}. The inequality can be regarded as a homotopy coherent version of the equivariant Fr\o yshov inequality proved in \cite{Baraglia-Hekmati:2024-1}. 

\item The family Fr\o yshov inequality combined with a theorem of Elkies gives us $0 \geq \delta^{(p)}_\infty(Y)$. On the other hand, computations from \cite[Theorem 1.8]{Baraglia-Hekmati:2022-1} imply that for a large enough prime $p$:
\[
0< \delta^{(p)}_\infty
(Y),
\]
which completes the proof. This will be observed in \cref{Proof of main result}. 
\end{itemize}

The gauge theoretic parts in this paper are based on a certain family Bauer--Furuta invariant and a generalization of Donaldson's diagonalization theorem in a family setting. For these backgrounds, see \cite{Ba21, Baraglia-Hekmati:2024-1, Baraglia-Konno:2022-1, KT22, Ba24}. This idea of homotopy coherent group action appears in \cite{HLS16} in Heegaard Floer theory to develop $G$-equivariant Heegaard Floer homology. A part of our strategy can be thought as a Floer homotopy analog of their construction. 

\subsection*{Acknowledgements} 
We would like to thank Patrick Orson, Mark Powell, and Hokuto Konno for constructive conversations. The second author is partially supported by Samsung Science and Technology Foundation (SSTF-BA2102-02) and the POSCO TJ Park Science Fellowship.
The third author was partially supported
by JSPS KAKENHI Grant Number 20K22319, 22K13921.

In the process of writing this article, we have learned that Konno, Lin, Mukherjee, and Mu\~noz--Ech\'aniz \cite{JLME24} independently obtained a result related to \Cref{thm:main}. For instance, they show that boundary Dehn twists on certain symplectic fillings and on a specific family of contractible fillings have infinite order, using a completely different technique. Also, the authors have been informed that Jin Miyazawa also obtained a result related to Dehn twists on the Milnor fibers using a different technique. 


\section{Homotopy coherent finite group actions}\label{sec:homotopycoherentactions}

We begin by recalling the Borel construction for $\mathbb{Z}_p$-actions on a manifold. Let $X$ be a smooth manifold, possibly with boundary, equipped with a smooth $\mathbb{Z}_p$-action, where $p$ is an odd prime. We can then define a smooth $X$-bundle (i.e.\ a $X$-bundle with structure group $\mathrm{Diff}^+(X)$) as follows:
\[
X \rightarrow (X\times E\mathbb{Z}_p)/\mathbb{Z}_p \rightarrow B\mathbb{Z}_p.
\]
While this is probably the simplest definition of Borel construction, we will provide a way to construct it by gluing trivial $X$-bundles over simplices, with a view towards applying it for Dehn twists.

The Eilenberg-MacLane space $B\mathbb{Z}_p$ admits a canonical structure of a CW complex, defined as follows. We choose a generator of $\mathbb{Z}_p$, which we denote by $[1]$. Then the elements of $\mathbb{Z}_p$ are given by $[0],[1],\dots,[p-1]$. Then we build up a CW complex in the following way; note that this is the geometric realization of the nerve of the 1-object category describing the group $\Z_p$.
\begin{itemize}
    \item We have one 0-cell, which we denote by $\ast$;
    \item We have $p$ 1-cells, labelled by elements of $\mathbb{Z}_p$, so $[0],\dots,[p-1]$;
    \item For each triple $([i], [j], [i+j])$, we glue in a 2-simplex;
    \item Inductively, for every $n$ and any sequence $[i_1],\dots,[i_n]$ of elements in $\mathbb{Z}_p$ with $(n-1)$-simplexes spanned by the following length $n-1$ sequences:
    \[
    ([i_1],\cdots,[i_{n-1}]);\quad ([i_1],\cdots,[i_{j-1}],[i_j + i_{j+1}\text{ mod }p],[i_{j+2}],\cdots,[i_n]) \text{ for all } 1 \le j \le n-1;\quad ([i_2],\cdots,[i_n]).
    \]
    We glue in an $n$-simplex so that its $n+1$ facets are given by the $(n-1)$-simplexes given as above.
\end{itemize}
To constructively define the Borel construction for $X$, we proceed as follows. We start with $X$, which is the trivial $X$-bundle over the 0-skeleton $\ast$. For each $[i]\in \mathbb{Z}_p$, we glue the trivial $X$-bundle over the 1-simplex $[i]$ to $X$, by gluing its initial point to $X$ via identity and terminal point to $X$ via the action of $[i]$. Then, for each 2-simplex to be glued in, its boundary triangle looks as follows:
\[
\xymatrix{
& \ast\ar[rd]^{[j]} & \\
\ast\ar[ru]^{[i]}\ar[rr]^{[i+j\text{ mod }p]} && \ast.
}
\]
Observe that the $X$-bundle over the upper two edges is the same as the $X$-bundle over the lower edge; they are both induced by the action of $[i+j\text{ mod }p]$. Therefore the trivial $X$-bundle over a 2-simplex can be canonically glued in. The same argument can then be inductively used to glue trivial $X$-bundles over $n$-simplices.

\begin{rem}
    This CW complex structure on $B\mathbb{Z}_p$ will be used in later sections. In particular, for each $n \geq 0$, we denote the $n$-skeleton with respect to the CW complex structure described above by $(B\mathbb{Z}_p)_n$.
\end{rem}

This Borel construction suggestes that, instead of studying smooth $\mathbb{Z}_p$-actions on $X$, we might instead study smooth $X$-bundles over $B\mathbb{Z}_p$, which corresponds to homotopy classes of maps $B\mathbb{Z}_p\rightarrow B\mathrm{Diff}(X)$. This is precisely the definition of smooth homotopy coherent actions of $\mathbb{Z}_p$ on $X$. For the sake of self-containedness, we will make this a separate definition here.

\begin{defn}
    Let $X$ be a smooth manifold and $G$ be a discrete group. A \emph{smooth homotopy coherent $G$-action} on $X$ is a homotopy class of maps $BG\rightarrow B\mathrm{Diff}(X)$. If $X$ has a boundary, then a smooth homotopy coherent $G$-action $\bf H$ on $X$ restricts to a smooth homotopy coherent $G$-action ${\bf H}\vert_{\partial X}$ on $\partial X$; we refer to it as the restriction of $\bf H$ to the boundary $\partial X$.
\end{defn}

    From now on, we will regard strict $\mathbb{Z}_p$-actions automatically as homotopy coherent $\mathbb{Z}_p$-actions.

\begin{defn}
    Given a homotopy coherent $G$-action ${\bf H} \colon BG\rightarrow B\mathrm{Diff}(X)$, we refer to the induced map
    \[
    G\cong \pi_1(BG) \xrightarrow{{\bf H}_\ast} \pi_1(B\mathrm{Diff}(X)) \cong \pi_0(\mathrm{Diff}(X)) = \mathrm{MCG}(X)
    \]
    the \emph{homotopy-monodromy} of $\bf H$. We sometimes also compose it with actions of $\mathrm{MCG}(X)$ on homology or cohomology of $X$; we denote them as homology monodromy or cohomology monodromy, respectively.
\end{defn}

It is clear that any smooth $\mathbb{Z}_p$-action on $X$ uniquely specifies a smooth homotopy coherent $\mathbb{Z}_p$-action on $X$. However, in some cases, it is possible to construct a smooth homotopy coherent $\mathbb{Z}_p$-action directly, by gluing trivial $X$-bundles over simplices; for those examples, it is unclear whether they should be induced from an honest smooth $\mathbb{Z}_p$-action. This will be discussed again in \Cref{Dehn twists and group actions}.

In order to set the families Seiberg--Witten theory, we need a homotopy coherent spin$^c$ structure. Let us denote the isomorphism class of a spin$^c$ structure $\fraks$ by $[\fraks]$. From now on, we consider compact-oriented $4$-manifold $X$ with boundary $Y$. We further suppose $Y$ is a homology $3$-sphere. 

\begin{lem}\label{S1}
    Let $G$ be a discrete group and ${\bf H}$ be a homotopy coherent smooth $G$-action on $X$ with $b_1(X)=0$. Suppose that its homotopy-monodromy preserves the orientation of $X$ and an isomorphism class  of a $\mathrm{spin}^c$ structure $\mathfrak{s}$ on $X$. Then the corresponding map $BG \to B\mathrm{Diff} (X)$ factors through 
    \[
    BG\to B\mathrm{Diff}^+  (X; [\fraks]), 
    \]
    where $\mathrm{Diff}^+  (X; [\fraks])$ is the group of orientation-preserving diffeomorphisms which preserve $[\fraks]$. 
    Moreover, if there is a lift of the $(i-1)$-skeleton, then the obstruction to extending to the $i$-skeleton of the lifting problem:
    \[
    \xymatrix{
     & B\mathrm{Aut}(X;\fraks) \ar[d] \\
     BG \ar@{.>}[ur]^{\exists ?} \ar[r] & B\mathrm{Diff}^+(X,[\fraks])
    }
    \]
lies in 
\[
o({\bf H})\in  H^i \left(B G; \pi_{i-1} (BS^1)\right), 
\]
where $\mathrm{Aut} (X; \fraks)$ denotes the set of all isomorphisms of the spin$^c$ structure $\fraks$ as the principal bundle. 
The same is true for a family of 3-manifolds. 
\end{lem}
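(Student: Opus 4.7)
The plan is to handle the two claims separately: the factoring statement follows from elementary covering-space theory, while the obstruction statement follows from standard cellular obstruction theory applied to a fibration of classifying spaces whose fiber is identified by computing the gauge group of $\fraks$.

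For the factoring claim, I would observe that $\mathrm{Diff}^+(X;[\fraks]) \subseteq \mathrm{Diff}(X)$ is a union of path components, so the induced map $B\mathrm{Diff}^+(X;[\fraks]) \to B\mathrm{Diff}(X)$ is the covering corresponding to the subgroup $\pi_0(\mathrm{Diff}^+(X;[\fraks])) \leq \mathrm{MCG}(X)$. The hypothesis that the homotopy-monodromy preserves orientation and $[\fraks]$ is precisely the statement that the induced map $G = \pi_1(BG) \to \mathrm{MCG}(X)$ lands in this subgroup, so the classical lifting criterion for covering spaces produces the desired factorization through $B\mathrm{Diff}^+(X;[\fraks])$.

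For the obstruction claim, the forgetful homomorphism $\mathrm{Aut}(X;\fraks) \to \mathrm{Diff}^+(X;[\fraks])$, sending a bundle isomorphism to its underlying diffeomorphism, is surjective with kernel the gauge group $\mathcal{G}_\fraks = \mathrm{Map}(X,S^1)$, forming a short exact sequence of topological groups. Passing to classifying spaces yields a homotopy fiber sequence
\[
B\mathcal{G}_\fraks \to B\mathrm{Aut}(X;\fraks) \to B\mathrm{Diff}^+(X;[\fraks]).
\]
Under the hypothesis $b_1(X) = 0$, a direct computation gives $\pi_0(\mathcal{G}_\fraks) \cong H^1(X;\Z) = 0$, $\pi_1(\mathcal{G}_\fraks) \cong H^0(X;\Z) = \Z$, and all higher homotopy groups vanish, so $\mathcal{G}_\fraks \simeq S^1$ and $B\mathcal{G}_\fraks \simeq BS^1$. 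The standard cellular obstruction theory for lifting a map into the base of a fibration then produces, given a lift on the $(i-1)$-skeleton of $BG$, an obstruction class in $H^i(BG;\pi_{i-1}(BS^1))$ whose vanishing is equivalent to the existence of an extension over the $i$-skeleton.

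The only delicate technical point I anticipate is verifying that the short exact sequence of infinite-dimensional topological groups actually gives rise to a genuine fibration of classifying spaces, i.e.\ that $\mathrm{Aut}(X;\fraks) \to \mathrm{Diff}^+(X;[\fraks])$ is a locally trivial principal $\mathcal{G}_\fraks$-bundle, but this is standard for spaces of smooth sections and principal bundles over compact manifolds. The case of a family of 3-manifolds is formally identical: since the relevant 3-manifolds are homology spheres (as recorded just before the lemma), their first Betti numbers vanish and the fiberwise gauge groups are again equivalent to $S^1$, so the same obstruction-theoretic argument applies verbatim.
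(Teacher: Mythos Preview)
Your proposal is correct and follows essentially the same approach as the paper: both use the fibration $B\mathrm{Map}(X,S^1) \to B\mathrm{Aut}(X;\fraks) \to B\mathrm{Diff}^+(X;[\fraks])$ arising from the gauge-group exact sequence, identify the fiber with $BS^1$ using $b_1(X)=0$, and invoke standard obstruction theory. The only cosmetic difference is that the paper establishes $\mathrm{Map}(X,U(1))\simeq S^1$ by an explicit lifting argument (showing the based-map space $\mathrm{Map}^0(X,U(1))$ is contractible via lifts to $\mathbb{R}$), whereas you compute the homotopy groups of the gauge group directly; your treatment of the factoring claim via covering-space theory is also slightly more explicit than the paper's ``obvious''.
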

\begin{proof}
The first claim is obvious. 
Note that we have an exact sequence of topological groups: 
\[
\mathrm{Map} (X, U(1)) \to \mathrm{Aut} (X; \fraks) \to \mathrm{Diff}^+ (X; [\fraks])
\]
which induces a fibration
\[
B\mathrm{Map} (X, U(1)) \to B\mathrm{Aut} (X; \fraks) \to B\mathrm{Diff}^+ (X; [\fraks]).
\]
In order to compute homotopy type of $\mathrm{Map}(X, U(1))$, we consider the fibration
\[
\mathrm{Map}^0(X, U(1))\to \mathrm{Map}(X, U(1)) \to S^1, 
\]
where $\mathrm{Map}^0(X, U(1))$ denotes the group of gauge transformations with $g(x_0) =\id$ for a some fixed base point $x_0 \in \partial X \subset X$. We claim $\mathrm{Map}^0(X, U(1))$ is contractible hence the evaluation map $\mathrm{Map}(X, U(1)) \to S^1$ is weak homotopy equivalence. 

Consider the covering map $p\colon \mathbb{R}\rightarrow U(1)$ with $p(0)=\mathrm{id}$. Using the assumption that $H^1(X;\mathbb{Z})=0$, we know that every map $f\in \mathrm{Map}^0(X,U(1))$ has a unique lift $\tilde{f}\in \mathrm{Map}^0(X,\mathbb{R})$. Here, $\mathrm{Map}^0(X,\mathbb{R})$ is the space of maps $g:X\rightarrow\mathbb{R}$ with $g(x_0)=0$.

Now we have continuous maps
\[
\begin{split}
& \mathrm{Map}^0(X,U(1)) \xrightarrow{f\mapsto \tilde{f}} \mathrm{Map}^0(X,\mathbb{R}) \\
& \mathrm{Map}^0(X,\mathbb{R}) \xrightarrow{g\mapsto p\circ g} \mathrm{Map}^0(X,U(1))
\end{split}
\]
which are inverses to each other by the uniqueness of lift. Therefore they are homeomorphisms. Since $\mathrm{Map}^0(X,\mathbb{R})$ is clearly contractible, we deduce that $\mathrm{Map}^0(X,U(1))$ is also contractible. Then the desired statement follows from the general obstruction theory. 
\end{proof}

 From the naturality of the obstruction class, we see the following: 
 \begin{cor}\label{existence of lift}
Let $G$ be a discrete group and ${\bf H}$ be a homotopy coherent smooth $G$-action on $X$ with $b_1(X)=0$. Suppose that its homotopy-monodromy preserves the orientation of $X$ and an isomorphism class  of a $\mathrm{spin}^c$ structure $\mathfrak{s}$ on $X$. If the homotopy coherent action ${\bf H}$ comes from a honest $G$-action on $Y$ and the natural inclusion map 
\[
Y \to X
\]
induces isomorphism on their cohomologies, then there is a lift 
 \[
    \xymatrix{
     & B\mathrm{Aut}(X;\fraks) \ar[d] \\
     BG \ar[ur]\ar[r] & B\mathrm{Diff}^+(X,[\fraks]).
    }
    \]
 \end{cor}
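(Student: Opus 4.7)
My plan is to apply the obstruction-theoretic second half of \Cref{S1} and kill the unique non-trivial obstruction by transferring vanishing from the boundary along the inclusion $\iota\colon Y\inc X$.

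First I would invoke \Cref{S1}: the obstructions to inductively extending a lift $BG\to B\mathrm{Aut}(X;\fraks)$ over the skeleta of $BG$ lie in $H^i(BG;\pi_{i-1}(BS^1))$. Since $BS^1\simeq K(\Z,2)$ has only $\pi_2=\Z$ non-trivial, the only potentially non-zero class among them is a single obstruction
\[
o(\mathbf{H})\in H^3(BG;\Z),
\]
so the problem reduces to showing that this class vanishes.

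Next I would set up the naturality needed to transfer vanishing from the boundary. The restriction homomorphisms $\mathrm{Aut}(X;\fraks)\to\mathrm{Aut}(Y;\fraks|_Y)$ and $\mathrm{Diff}^+(X,[\fraks])\to\mathrm{Diff}^+(Y,[\fraks|_Y])$ fit into a commutative square of fibrations
\[
\xymatrix{
B\mathrm{Aut}(X;\fraks)\ar[r]\ar[d] & B\mathrm{Aut}(Y;\fraks|_Y)\ar[d]\\
B\mathrm{Diff}^+(X,[\fraks])\ar[r] & B\mathrm{Diff}^+(Y,[\fraks|_Y]).
}
\]
Both vertical fibers are weakly equivalent to $BS^1$ by the argument of \Cref{S1} (using $b_1(X)=0$ and that $Y$ is a homology sphere), and the induced map of fibers $B\mathrm{Map}(X,U(1))\to B\mathrm{Map}(Y,U(1))$ is compatible with evaluation at a common basepoint $x_0\in Y\subset X$. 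The hypothesis $\iota^*\colon H^*(X)\xrightarrow{\sim} H^*(Y)$ is exactly what guarantees that this map on fibers is a weak equivalence inducing the identity on $\pi_2\cong \Z$, so by the naturality of the obstruction class, $o(\mathbf H)$ is identified with the corresponding obstruction $o(\mathbf{H}|_Y)\in H^3(BG;\Z)$ for the lifting problem on $Y$ under the identity map of coefficients.

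Finally I would observe that $o(\mathbf{H}|_Y)=0$. By assumption $\mathbf{H}|_Y$ is induced by a strict $G$-action on $Y$; since $Y$ is an integer homology sphere, its spin$^c$ structure is unique up to isomorphism, and the strict action can be upgraded to a strict homomorphism $G\to\mathrm{Aut}(Y;\fraks|_Y)$ by choosing lifts and exploiting divisibility of $U(1)$ to arrange the required relations (in the cyclic case of interest, one simply rescales a chosen lift of the generator to have the correct order). This produces a strict classifying map $BG\to B\mathrm{Aut}(Y;\fraks|_Y)$, forcing $o(\mathbf{H}|_Y)=0$; naturality then gives $o(\mathbf{H})=0$ and hence the desired lift on $X$. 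The part I expect to require the most care is verifying that the cohomological hypothesis really does realize the identification of obstruction classes via the identity on $H^3(BG;\Z)$, i.e.\ that the induced self-equivalence of $BS^1$ produced by the restriction is canonically trivialized rather than merely being \emph{some} self-equivalence.
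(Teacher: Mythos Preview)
Your proposal is correct and follows essentially the same approach as the paper: invoke the obstruction theory of \Cref{S1}, use naturality of the obstruction class under restriction to $Y$, observe $o(\mathbf{H}|_Y)=0$ because the strict $G$-action admits a spin$^c$ lift, and conclude $o(\mathbf{H})=0$ since the restriction $B\mathrm{Map}(X,U(1))\to B\mathrm{Map}(Y,U(1))$ is a weak equivalence on fibers. The paper is terser about why $o(\mathbf{H}|_Y)=0$ (simply invoking the Borel construction for $(Y,\fraks|_Y)$), whereas you spell out the splitting in the cyclic case; your closing concern is already handled by your own observation that evaluation at the common basepoint $x_0\in Y\subset X$ makes the identification of fibers canonical.
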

 \begin{proof}
     If we restrict the lifting problem to the boundary $Y$, we have
   \[
    \xymatrix{
     & B\mathrm{Aut}(Y;\fraks|_{Y}) \ar[d] \\
     BG \ar[ur]\ar[r] & B\mathrm{Diff}^+(Y,[\fraks|_{Y}]).
    }
    \]
Since the $G$-action on $Y$ is strict so we can consider the Borel construction for $(Y, \fraks|_{Y})$, and the corresponding obstruction 
\[
o ( {\bf H}|_{Y}
 ) \in  H^i \left(B G; \pi_{i-1} ( B\mathrm{Map} (Y, U(1)))\right)
 \]
 vanishes for each $i$. Moreover, since the obstruction class is natural, we have 
 \[
 i^* o({\bf H} ) = o( {\bf H}|_Y)=0
 \]
 and the natural restriction map 
 \[
 i^* \colon H^i\left(B G; \pi_{i-1} ( B\mathrm{Map} (X, U(1))\right) \to H^i\left(B G; \pi_{i-1} ( B\mathrm{Map} (Y, U(1)))\right)
 \]
 is isomorphism from
 \[
 \pi_{i-1} (BS^1) \cong \pi_{i-1} ( B\mathrm{Map} (X, U(1))) \xrightarrow{\cong} \pi_{i-1} ( B\mathrm{Map} (Y, U(1))) \cong \pi_{i-1} (BS^1) 
 \]
which is observed in \cref{S1}, where the map is induced from the inclusion $Y \hookrightarrow X$.
 This completes the proof of $o({\bf H} ) =0$ for every skeleton.
 \end{proof}
 \begin{rem}
 In the above computation, the only obstruction lies in $H^3 (B G; \pi_{2} ( BS^1)) \cong H^3(BG; \Z) $. 
     In this paper, we do not care about choices of lifts since we will not consider invariants of homotopy coherent actions. 
 \end{rem}

\section{Dehn twists and group actions}\label{Dehn twists and group actions}

Let $Y$ be a closed oriented 3-manifold with an $S^1$-action that induces a nontrivial element $\phi \in \pi_1(\mathrm{Diff}^+(Y))$, based at the identity. Specifically, the smooth $S^1$-action on $Y$ is given by $e^{2\pi it} \cdot s := \phi_t(s)$, where $e^{2\pi it} \in S^1$ and $s \in Y$. Let $\Phi$ be the induced diffeomorphism of $Y \times [0,1]$, defined by
$$ \Phi \colon  Y \times [0,1] \to Y\times [0,1] ; \qquad (s,t) \mapsto (\phi_t(s),t).$$
Thus, if $Y$ bounds a smooth, compact 4-manifold $X$, then there is a boundary Dehn twist on $X$ along $Y$. Note that  they are only well-defined up to isotopy rel.\ boundary. In other words, we treat a boundary Dehn twist as an isotopy class of diffeomorphisms rel.\ boundary  rather than a specific concrete diffeomorphism.

\begin{lem} \label{lem:commutationoncylinder}
    Let $Y$ be a closed, oriented 3-manifold with an $S^1$-action that induces a $\mathbb{Z}_p$-action, and let $X$ be a smooth, compact 4-manifold that bounds $Y$. If the boundary Dehn twist $\Phi$ is smoothly isotopic rel.\ boundary to the identity, then there exists a diffeomorphism $\tau$ of $X$, extending the action of the generator of $\mathbb{Z}_p$ on $Y$, and a smooth isotopy ${H_t}$ from $\Phi$ to the identity, such that $\tau^p=\Phi$ and $H_t \tau = \tau H_t$ for each $t$.
\end{lem}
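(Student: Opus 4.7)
The plan is to produce $\tau$, a specific representative $\Phi$ of the boundary Dehn twist class, and the isotopy $H_t$ simultaneously by extending the $S^1$-action on $Y$ to an abelian $\mathbb{R}$-action on $X$ supported in a collar.

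Fix a collar $c\colon Y\times[0,1]\hookrightarrow X$ with $c(\cdot,0)=\mathrm{id}_{\partial X}$ and a smooth cutoff $\eta\colon[0,1]\to[0,1]$ with $\eta(0)=1$ and $\eta\equiv 0$ near $s=1$. I would define $\widetilde{\phi}\colon\mathbb{R}\times X\to X$ by $\widetilde{\phi}_t(c(y,s)):=c(\phi_{\eta(s)t}(y),s)$ on the collar and $\widetilde{\phi}_t:=\mathrm{id}$ outside. Since $\phi$ is an $S^1$-action and $\eta$ is independent of $t$, one checks $\widetilde{\phi}_s\circ\widetilde{\phi}_t=\widetilde{\phi}_{s+t}$, so $\widetilde{\phi}$ is an abelian $\mathbb{R}$-action by diffeomorphisms. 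Set $\tau:=\widetilde{\phi}_{1/p}$ and $\Phi:=\tau^p=\widetilde{\phi}_1$; then $\tau|_{\partial X}=\phi_{1/p}$, and $\Phi$ is a representative of the boundary Dehn twist class (the orientation of the loop can be absorbed into the choice of $\eta$).

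The candidate $\widetilde{\phi}_{1-t}$ commutes with $\tau$ by abelianness and runs from $\Phi$ to $\mathrm{id}$, but its boundary restriction is the nontrivial loop $\phi_{1-t}$, so it fails to be rel.\ boundary. The hypothesis is then used to correct the boundary. By the long exact sequence of the fibration $\mathrm{Diff}^+(X,\partial X)\hookrightarrow\mathrm{Diff}^+(X)\to\mathrm{Diff}^+(Y)$, the assumption that $[\Phi]=0\in\pi_0\mathrm{Diff}^+(X,\partial X)$ is equivalent to the loop $t\mapsto\phi_t$ lifting to a based loop $\Psi\colon S^1\to\mathrm{Diff}^+(X)$ with $\Psi_t|_{\partial X}=\phi_t$. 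Setting $H_t:=\widetilde{\phi}_{1-t}\circ\Psi_{1-t}^{-1}$ yields $H_0=\Phi$, $H_1=\mathrm{id}$, and $H_t|_{\partial X}=\phi_{1-t}\phi_{1-t}^{-1}=\mathrm{id}$, as required.

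The main obstacle is the commutation $H_t\tau=\tau H_t$: once the abelian $\widetilde{\phi}$-factors cancel, this reduces to requiring $\Psi_t$ to commute with $\tau=\widetilde{\phi}_{1/p}$ for every $t$. My plan is to refine the above lifting argument so as to produce $\Psi$ inside the $\tau$-centralizer. Since the $S^1$-action on $Y$ is abelian, each $\phi_t$ commutes with $\phi_{1/p}=\tau|_{\partial X}$, so the only remaining obstruction lives in $\pi_0$ of the centralizer of $\tau$ in $\mathrm{Diff}^+(X,\partial X)$; this is handled either by Moser-type averaging of the generating time-dependent vector field of an arbitrary lift over the cyclic group $\langle\tau\rangle$, or by a direct equivariant obstruction-theoretic argument using that $\Phi=\tau^p$ tautologically lies in this centralizer.
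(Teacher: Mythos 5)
Your construction of the abelian $\mathbb{R}$-action $\widetilde\phi$ in the collar, of $\tau=\widetilde\phi_{1/p}$ with $\tau^p=\Phi=\widetilde\phi_1$, and of the candidate isotopy $H_t=\widetilde\phi_{1-t}\circ\Psi_{1-t}^{-1}$ (with $\Psi$ a based loop in $\mathrm{Diff}^+(X)$ lifting $\phi$, whose existence is indeed equivalent to $[\Phi]=0$ via the restriction fibration) is fine and close in spirit to the paper's use of the $S^1$-action in the collar. The genuine gap is exactly where you place it: the commutation $H_t\tau=\tau H_t$, which you reduce to choosing $\Psi$ inside the centralizer of $\tau$ in $\mathrm{Diff}^+(X)$, is not established, and neither of your proposed fixes works as stated. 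Averaging the generating vector field ``over the cyclic group $\langle\tau\rangle$'' presupposes that $\tau$ generates a finite group of diffeomorphisms of $X$, but $\tau^p=\Phi$ is only \emph{isotopic} to the identity, not equal to it, so $\langle\tau\rangle$ is not a $\mathbb{Z}_p$-subgroup (and indeed cannot in general be, by the non-extension results the paper relies on); concretely, $\tau_*\bigl(\tfrac1p\sum_{k=0}^{p-1}(\tau^k)_*V_t\bigr)$ differs from the average by the term $\tfrac1p\bigl(\Phi_*V_t-V_t\bigr)$, so the averaged field is not $\tau$-invariant, and in any case the flow of an averaged field need not close up into a based loop restricting to $\phi_t$ on the boundary. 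The ``equivariant obstruction'' variant is circular: identifying the obstruction as a class in $\pi_0$ of the $\tau$-centralizer inside $\mathrm{Diff}^+(X,\partial X)$ requires knowing that $\Phi$ is isotopic to the identity rel boundary \emph{through diffeomorphisms commuting with $\tau$}, which is essentially the conclusion of the lemma; the hypothesis only gives a non-equivariant isotopy, and ``$\Phi=\tau^p$ lies in the centralizer'' says nothing about which component of the centralizer it lies in.

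For comparison, the paper sidesteps the equivariance problem entirely: it first isotopes $\Phi$ to a \emph{shifted} Dehn twist $\Phi'$ supported deeper in the collar, through maps built from the $S^1$-action (these commute with $\tau$ automatically, since the circle action is abelian), and then applies the hypothesis to the inner copy $X\smallsetminus(Y\times(0,1])$, for which $\Phi'$ is the boundary Dehn twist; the resulting isotopy to the identity is rel the outer collar $Y\times[0,1]$, hence has support disjoint from the support of $\tau$, so it commutes with $\tau$ for free. Some such mechanism --- making the isotopy from the hypothesis live away from the support of $\tau$, rather than trying to make an arbitrary isotopy $\tau$-equivariant --- is what your argument is missing.
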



\begin{proof}
    Choose a collar neighborhood $Y \times [-1, 1] \subset X$ such that the boundary of $X$ is identified with $Y \times \{1\}$. Also, choose a smooth monotone function $f\colon \mathbb{R} \rightarrow [0, 2\pi]$ such that $f(x) = 0$ for $x \leq \frac{1}{3}$ and $f(x) = 2\pi$ for $x \geq \frac{2}{3}$. Define a self-diffeomorphism $\tau$ of $Y \times [-1, 1]$ as follows, and extend it by the identity outside $Y \times [-1, 1]$:
    \[
    \tau(y,s) = \left( e^{\frac{if(s)}{p}} \cdot y,s \right).
    \]
    Then $\tau^p$ is the boundary Dehn twist $\Phi$. More precisely, since the Dehn twist is defined up to isotopy rel.\ boundary, we may choose $\tau^p$ as its representative. Thus, we can assume that the diffeomorphism $\Phi$ is defined as: $$\Phi(y,s) = \left( e^{if(s)}\cdot y,s \right).$$ We further consider a shifted Dehn twist $\Phi^\prime$, which is defined on $Y\times [-1,1]$ by 
    \[
    \Phi^\prime(y,s) = \left( e^{if(s+1)} \cdot y,s \right).
    \]
    This is clearly isotopic to $\Phi$ rel.\ boundary; we can choose the isotopy ${H^\prime_t}$ from $\Phi$ to $\Phi^\prime$ as follows.
    \[
    H^\prime_t(x) = 
    \begin{cases}
        \left(e^{i f(s + g(t))} \cdot y, s\right) & \text{if } x=(y,s) \in Y \times [-1,1], \\
        \, x & \text{otherwise.}
    \end{cases}
\]
where $g\colon \mathbb{R}\rightarrow [0,1]$ is a smooth monotone function such that $g(t)=0$ for $t\le \frac{1}{3}$ and $g(t)=1$ for $g\ge \frac{2}{3}$. Then it is clear that $H^\prime_t \tau = \tau H^\prime_t$ for each $t \in [0,1]$, since both sides are the identity outside $Y \times [-1,1]$, and inside $Y \times [-1,1]$, we have:
    \[
    H^\prime_t \tau (y,s) = \tau H^\prime_t (y,s) = \left( e^{i\left(f(s+g(t))+\frac{f(s)}{p}\right)} \cdot y,s \right).
    \]

    Now observe that $X \smallsetminus (Y \times (0,1])$ is diffeomorphic to $X$, and in this `smaller' $X$, $\Phi^\prime$ is the boundary Dehn twist. Hence, by assumption, $\Phi^\prime$ is smoothly isotopic rel.\ boundary (which is $Y \times {0}$ here) to the identity. Choose such an isotopy ${H''_t}$ and extend each $H''_t$ by the identity on $Y \times [0,1]$, so that it becomes an isotopy of diffeomorphisms on the entire $X$ rel.\ $Y \times [0,1]$, from $\Phi^\prime$ to the identity. Since $\tau$ is identity outside $Y\times [0,1]$, we have
    \[
    H''_t \tau = \tau H''_t \quad \text{ for each }t\in [0,1].
    \]
    Therefore, the concatenation of the isotopies $H'_t$ and $H''_t$ yields the desired homotopy $H_t$ from $\Phi$ to the identity.
\end{proof}

\begin{rem}
    Note that, in \Cref{lem:commutationoncylinder} the relation $H_t \tau = \tau H_t$ is to be satisfied strictly. We are not merely claiming that they are isotopic diffeomorphisms. 
\end{rem}

\begin{lem}\label{lem:extensiononcylinder}
    Under the same assumption as in \Cref{lem:commutationoncylinder}, there exists a smooth homotopy coherent $\mathbb{Z}_p$-action on $X$ which extends the given $\mathbb{Z}_p$-action on $Y$.
\end{lem}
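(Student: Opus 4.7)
The plan is to build a smooth $X$-bundle $\mathcal{E} \to B\mathbb{Z}_p$ cell-by-cell over the CW structure on $B\mathbb{Z}_p$ described in \Cref{sec:homotopycoherentactions}, such that its boundary restriction coincides with the Borel construction of the strict $\mathbb{Z}_p$-action on $Y$. The construction data is furnished by \Cref{lem:commutationoncylinder}: the diffeomorphism $\tau \in \mathrm{Diff}^+(X)$ extends the Seifert generator on $Y$ and satisfies $\tau^p = \Phi$, while $H_t$ is an isotopy from $\Phi$ to $\mathrm{id}$ with $H_t \tau = \tau H_t$ for every $t \in [0,1]$. Since $\Phi$ is the identity on $\partial X = Y$ and the isotopy is rel boundary, $H_t|_Y = \mathrm{id}_Y$ for every $t$; this will force the boundary restriction of $\mathcal{E}$ to agree with the strict Borel construction on the nose.

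On the $0$-cell I assign fiber $X$, and on the $1$-cell labeled $[i] \in \mathbb{Z}_p$ the mapping cylinder of $\tau^i$. For each $2$-cell whose boundary edges are labeled $([i], [j], [(i+j)\bmod p])$, extending the bundle to the interior disk amounts to choosing a path in $\mathrm{Diff}^+(X)$ from $\tau^{(i+j)\bmod p}$ to $\tau^i \tau^j = \tau^{i+j}$: when $i+j < p$ these diffeomorphisms agree literally and I use the constant path, while when $i+j \geq p$, writing $\tau^{i+j} = \Phi\,\tau^{i+j-p}$, I use $s \mapsto H_{1-s}\,\tau^{i+j-p}$. Because $H_t|_Y = \mathrm{id}$ and the generator action on $Y$ already satisfies $g^p = \mathrm{id}_Y$, each such path restricts on $Y$ to the constant path demanded by the strict Borel construction on the $2$-skeleton.

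For the extension over $n$-simplices with $n \geq 3$, I proceed inductively. The bundle on $\partial \Delta^n$ for an $n$-simplex labeled $([i_1], \ldots, [i_n])$ is assembled from the $n+1$ facet data, each a concatenation of compositions of $\tau^k$ and $H_t$. The commutation $H_t \tau = \tau H_t$ promotes to $H_t \tau^k = \tau^k H_t$ for every $k$, so any word in these generators can be normalized to a single $\tau$-factor followed by a product of $H$-values; this way the loop in $\mathrm{Diff}^+(X)$ traced around $\partial \Delta^n$ can be rearranged so that its nontrivial content lies in the image of a map from the contractible parameter cube $[0,1]^k$, hence is null-homotopic, and the chosen null-homotopy provides the filling of $\Delta^n$. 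Since $H_t|_Y = \mathrm{id}$, every filling constructed in this way is trivial on $Y$, preserving boundary compatibility at each stage.

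The main obstacle lies in this inductive step for $n \geq 3$, where one must verify carefully that all the $H$- and $\tau^k$-words coming from the $n+1$ facets indeed consolidate into a null-homotopic loop in a contractible image, and that the choices of null-homotopy at each step remain mutually coherent. The commutation relation $H_t \tau = \tau H_t$ is precisely the structural property that makes the required normalization possible. Once this inductive construction is complete, the assembled bundle $\mathcal{E} \to B\mathbb{Z}_p$ classifies a smooth homotopy coherent $\mathbb{Z}_p$-action on $X$ whose restriction to $Y$ is the Borel construction of the given strict $\mathbb{Z}_p$-action, completing the proof.
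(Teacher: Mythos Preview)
Your framework matches the paper's exactly: build the bundle over the nerve CW structure of $B\mathbb{Z}_p$ using $\tau^i$ on $1$-cells and $H_t\,\tau^{i+j-p}$ on $2$-cells when $i+j\ge p$. The divergence is in the inductive step for $n\ge 3$, where you argue that the boundary loop factors through a contractible cube and is therefore null-homotopic, then flag coherence of the chosen null-homotopies as ``the main obstacle.'' That worry is warranted: arbitrary null-homotopies chosen independently at each $3$-simplex need not assemble compatibly over $4$-simplices, and your sketch does not explain why they do.

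The paper sidesteps this entirely with a sharper observation. At a $3$-simplex $([i],[j],[k])$, the two competing isotopies one must compare (e.g.\ $H_t\,\tau^{i+j+k-p}$ versus $\tau^i H_t\,\tau^{j+k-p}$ on the first segment) are not merely homotopic but \emph{literally equal}, because $H_t\tau=\tau H_t$ gives $\tau^i H_t\,\tau^{j+k-p}=H_t\,\tau^{i+j+k-p}$ pointwise in $t$. Hence the \emph{constant} isotopy of isotopies fills every $3$-simplex, and the same equality propagates verbatim to all higher simplices. This makes the filling canonical at every stage, so no coherence issue ever arises. Your normalization idea is pointing at the same phenomenon, but stops short of noting that the normalized paths actually coincide; once you make that observation, the induction becomes trivial and the obstacle you identified dissolves.
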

\begin{proof}
    We use the CW complex structure on $B\mathbb{Z}_p$ as discussed in \Cref{sec:homotopycoherentactions}, and we apply \Cref{lem:commutationoncylinder} to obtain a diffeomorphism $\tau$ and an isotopy ${H_t}$ satisfying $\tau^p = \Phi$ and $\tau H_t = H_t \tau$ for each $t$. We start with the trivial $X$-bundle $B_\ast$ over the 0-cell $\ast$; we glue in trivial bundles $B_{[i]}$ over the 1-cell labelled by $[i]$ using the diffeomorphism $\tau^i$. Now, if $[i],[j]\in \mathbb{Z}_p$ and $i+j\ge p$, filling the ($X$-bundle over the) triangle
    \[
    \xymatrix{
    & \ast\ar[rd]^{[j]} & \\
    \ast\ar[ru]^{[i]}\ar[rr]^{[i+j\text{ mod }p]} && \ast
    }
    \]
    amounts to choosing an isotopy from $\tau^{i+j}$ to $\tau^{i+j-p}$; we choose $\{H_t \tau^{i+j-p}\}$.

    But then, gluing trivial $X$-bundles over 3-simplices becomes a nontrivial problem. Given three elements $[i],[j],[k]\in \mathbb{Z}_p$, we have to find an isotopy (of isotopies) between the concatenation of the following two isotopies (we assume the case $i+j\ge p$, $j+k\ge p$, $i+j+k\ge 2p$, as other cases are similar):
    \[
    \tau^{i+j+k} \xrightarrow{\{ H_t \tau^{i+j+k-p} \}} \tau^{i+j+k-p} \xrightarrow{\{ H_t \tau^{i+j+k-2p} \}} \tau^{i+j+k-2p},
    \]
    and
    \[
    \tau^{i+j+k} \xrightarrow{\{ \tau^i H_t \tau^{j+k-p} \}} \tau^{i+j+k-p} \xrightarrow{\{ H_t \tau^{i+j+k-2p} \}} \tau^{i+j+k-2p}.
    \]
    However, since $H_t \tau = \tau H_t$ for each $t$, these two isotopies are exactly the same, and thus choosing the constant isotopy between them gives a way to glue trivial $X$-bundles over 3-simplices. The higher simplices can now be glued in the same way.
\end{proof}

Now, we observe the monodromy of the homotopy coherent $\Z_p$-action constructed above. 
\begin{lem} \label{lem:monodromy_trivial}
    Let $\mathcal{E}$ be the $Y$-bundle and $\wt{\mathcal{E}}$ the $X$-bundle over $B\mathbb{Z}_p$, induced by the $\mathbb{Z}_p$-action on $Y$ and the homotopy coherent $\mathbb{Z}_p$-action on $X$, respectively, as discussed in \Cref{lem:extensiononcylinder}. Then, their homotopy-monodromies
    \[
    m_\mathcal{E}\colon \Z_p\rightarrow \mathrm{MCG}(Y)\qquad\text{ and }\qquad m_{\wt{\mathcal{E}}}\colon \Z_p\rightarrow \mathrm{MCG}(X)
    \]
    are both trivial.
\end{lem}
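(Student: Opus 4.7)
The plan is as follows. Both $\mathcal{E}$ and $\wt{\mathcal{E}}$ are assembled by gluing trivial fiber bundles over the $1$-cells of the CW structure on $B\mathbb{Z}_p$ introduced in \Cref{sec:homotopycoherentactions}. Under the identification $\pi_1(B\mathbb{Z}_p)\cong \mathbb{Z}_p$, the generator $[1]$ is represented by the loop traversing the $1$-cell so labelled, and the homotopy-monodromy sends $[1]$ to the isotopy class of the diffeomorphism used to attach that cell. So I only need to name the attaching diffeomorphism in each case and produce an explicit isotopy to the identity in $\mathrm{Diff}^+$.

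For $\mathcal{E}$: since $\mathcal{E}$ is the honest Borel construction of the strict $\mathbb{Z}_p$-action on $Y$, the $1$-cell labelled $[1]$ is glued using $\phi_{1/p}$. But the $S^1$-action itself provides the smooth path $t\mapsto \phi_{t/p}$ from the identity to $\phi_{1/p}$ in $\mathrm{Diff}^+(Y)$, so $m_\mathcal{E}([1])=0$ in $\mathrm{MCG}(Y)$.

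For $\wt{\mathcal{E}}$: the construction in \Cref{lem:extensiononcylinder} attaches the $1$-cell labelled $[i]$ via $\tau^i$, where $\tau$ is the diffeomorphism built in the proof of \Cref{lem:commutationoncylinder}, so $m_{\wt{\mathcal{E}}}([1])=[\tau]$. Rescaling the rotation parameter by $u\in[0,1]$ yields
\[
\tau_u(y,s)=\bigl(e^{iuf(s)/p}\cdot y,\, s\bigr)\quad\text{on}\quad Y\times[-1,1],
\]
extended by the identity off the collar; this is a smooth isotopy in $\mathrm{Diff}^+(X)$ from the identity (at $u=0$) to $\tau$ (at $u=1$), so $m_{\wt{\mathcal{E}}}([1])=0$ in $\mathrm{MCG}(X)$ as well.

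There is no real obstacle here — the proof is a direct unwinding of the definitions. The one point worth checking is the identification of the homotopy-monodromy of a cell-by-cell bundle with the attaching diffeomorphism on the $1$-skeleton (up to isotopy), which is immediate from the description of the classifying map $B\mathbb{Z}_p\to B\mathrm{Diff}^+(X)$ on $1$-cells given in \Cref{sec:homotopycoherentactions}. Note crucially that the isotopies $\{\phi_{t/p}\}$ and $\{\tau_u\}$ need not, and do not, fix the boundary — this is why the argument runs in $\mathrm{MCG}(X)$ rather than in $\pi_0(\mathrm{Diff}^+(X,\partial X))$, where the corresponding statement would be false in general.
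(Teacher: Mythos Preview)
Your proof is correct and follows essentially the same approach as the paper: identify the monodromy with the isotopy class of the gluing diffeomorphism on the $1$-cell labelled $[1]$ (namely $\phi_{1/p}$ for $Y$ and $\tau$ for $X$), then isotope to the identity using the ambient $S^1$-action in both cases. The paper merely asserts that $\tau$ is ``clearly isotopic to identity (since we are not considering isotopies rel.\ boundary)'', whereas you supply the explicit isotopy $\tau_u$; your closing remark about why this works in $\mathrm{MCG}(X)$ but not in $\pi_0(\mathrm{Diff}^+(X,\partial X))$ is also exactly the point the paper is making.
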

\begin{proof}
    Since $m_\mathcal{E}$ and $m_{\wt{\mathcal{E}}}$ depend only on the restrictions of $\mathcal{E}$ and $\wt{\mathcal{E}}$ to a simple closed curve $\gamma \subset B\mathbb{Z}_p$ that generates $\pi_1(B\mathbb{Z}_p) \cong \mathbb{Z}_p$, it is straightforward to see that $m_\mathcal{E}$ is determined by the action of $\mathbb{Z}_p$ on $Y$. As this $\mathbb{Z}p$ action on $Y$ extends to the $S^1$-action, it follows that $m_\mathcal{E}$ is trivial.

    For $m_{\wt{\mathcal{E}}}$, we observe that in the proof of \Cref{lem:extensiononcylinder}, the commutation between $\tau$ and the isotopy $\{H_t\}$ was used only to attach $X$-bundles over 2-cells and higher cells to a $X$-bundle over the 1-skeleton of $B\Z_p$. Hence we see that the image of the generator of $\Z_p$ by $m_{\wt{\mathcal{E}}}$ is given by the isotopy class of $\tau$ as defined in the proof of \Cref{lem:commutationoncylinder}. Since $\tau$ is clearly isotopic to identity (since we are not considering isotopies rel.\ boundary), it follows that $m_{\wt{\mathcal{E}}}$ is also trivial.
\end{proof}


 
Summarizing \Cref{lem:commutationoncylinder,lem:extensiononcylinder,lem:monodromy_trivial}, we obtain the following proposition.

\begin{prop}\label{extensiononcoborodism}

    Let $Y$ be a closed, oriented 3-manifold with an $S^1$-action that induces a $\mathbb{Z}_p$-action, and let $X$ be a smooth, compact 4-manifold that bounds $Y$. If the boundary Dehn twist $\Phi$ is smoothly isotopic rel.\ boundary to the identity, then the $\mathbb{Z}_p$-action on $Y$ extends to a smooth homotopy coherent $\mathbb{Z}_p$-action on $X$. 
    Moreover, the homotopy-monodromy of this action is trivial. \qed
    
    
    
\end{prop}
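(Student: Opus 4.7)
The plan is to assemble the three preceding lemmas, with the only subtle point being the verification that the pieces fit together compatibly on the boundary. First I would invoke \Cref{lem:commutationoncylinder} under the hypothesis that the boundary Dehn twist $\Phi$ is smoothly isotopic rel.\ boundary to the identity. This supplies a diffeomorphism $\tau$ of $X$ extending the $\mathbb{Z}_p$-action on $Y$, together with a smooth isotopy $\{H_t\}$ from $\Phi$ to the identity, such that $\tau^p = \Phi$ and the strict commutation $H_t \tau = \tau H_t$ holds for every $t \in [0,1]$.

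Given these data, I would then apply \Cref{lem:extensiononcylinder} to build a smooth homotopy coherent $\mathbb{Z}_p$-action on $X$, using the CW structure on $B\mathbb{Z}_p$ recalled in \Cref{sec:homotopycoherentactions}: the $1$-cells are attached via the powers $\tau^i$; the $2$-cells are filled using isotopies of the form $\{H_t \tau^{i+j-p}\}$; and the strict commutation $H_t \tau = \tau H_t$ guarantees that the two candidate fillings of each $3$-simplex coincide on the nose, so that all higher simplices can be attached via constant isotopies. The key compatibility check, which is not a separate computation but merely an observation from the construction in \Cref{lem:commutationoncylinder}, is that $\tau|_{\partial X}$ is precisely the generator of the $\mathbb{Z}_p$-action on $Y$ and the isotopies $\{H_t\}$ are rel.\ boundary, hence constant on $Y$; therefore the restriction of the resulting $X$-bundle over $B\mathbb{Z}_p$ to the boundary agrees cell-by-cell with the Borel construction of the given $\mathbb{Z}_p$-action on $Y$. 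This is exactly what it means for the homotopy coherent $\mathbb{Z}_p$-action to extend the $\mathbb{Z}_p$-action on $Y$.

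The triviality of the homotopy-monodromy is then the content of \Cref{lem:monodromy_trivial} applied to this bundle, and requires no additional argument. I expect the main, essentially only, obstacle to be keeping the boundary bookkeeping straight, since the homotopy coherent structure on $X$ must restrict to an honest (strict) Borel construction on $Y$ in order for the statement to make sense; this is already ensured by the care taken in \Cref{lem:commutationoncylinder} to make $\{H_t\}$ an isotopy rel.\ boundary and $\tau$ identity outside the collar of $\partial X$. Beyond this, the proof reduces to citing the three lemmas in sequence.
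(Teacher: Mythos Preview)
Your proposal is correct and matches the paper's own approach exactly: the paper treats this proposition as an immediate summary of \Cref{lem:commutationoncylinder,lem:extensiononcylinder,lem:monodromy_trivial}, and your additional check that the construction restricts on $\partial X$ to the strict Borel construction for $Y$ is a welcome clarification implicit in the paper's argument.
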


\begin{rem} \label{rem:higherpower}
    The above lemma continues to hold when the condition that the Dehn twist is smoothly isotopic to rel.\ boundary is replaced by a condition that $d$-th power of the Dehn twist is smoothly isotopic to identity rel.\ boundary for some integer $d$ relatively prime to $p$. To prove this, one simply has to replace $d$ by its multiple $d^\prime$ satisfying $d^\prime \equiv 1 \pmod p$ and replace the $\tau$ by its $d^\prime$th power so that its $p$-th power is $\Phi^{d'}$.
\end{rem}

\section{Bauer--Furuta invariant for homotopy coherent finite group actions}\label{Bauer--Furuta invariant for homotopy coherent finite group actions}

\subsection{Homotopy coherent Bauer--Furuta invariant for 4-manifolds with boundary} \label{subsec:BF_boundary}

Let $p$ be an odd prime and $n$ be a positive integer. We fix the following geometric data: 

\begin{itemize}
    \item A compact oriented spin$^c$ 4-manifold $(X, \mathfrak{s})$ with a spin$^c$ rational homology 3-sphere boundary $(Y, \mathfrak{t})$ such that 
    \[
    b_1(X) =0 \qquad\text{ and }\qquad b_2^+(X)=0, 
    \]
    and moreover, assume that the $\Z_p$-action on $Y$ preserves the isomorphism class of $\mathfrak{t}$.
    \item A map $B\Z_p \to B\mathrm{Diff}^+ (X ; [\fraks])$ defines a family 
    \[
  (X, \mathfrak{s}) \to E \to B\Z_p.   
    \]
    We assume that the induced family of $(Y,\mathfrak{t})$ from $E$ is isomorphic to the Borel construction of a strict $\Z_p$-action.
\end{itemize}
For these input data, we shall construct a Bauer--Furuta type invariant. In order to describe our main result on the Bauer--Furuta type invariant, we first review Baraglia--Hekmati's $\Z_p$-equivariant Seiberg--Witten Floer homotopy type~\cite{Baraglia-Hekmati:2024-1}.

Fix a $\Z_p$-invariant Riemann metric $g_Y$ on $Y$. It defines a fiberwise Riemann metric on the Borel construction $E\Z_p\times_{\Z_p} Y \to B\Z_p$. Since we know the induced bundle of the 3-manifold bundle as the boundary of $E$ is isomorphic to $E\Z_p\times_{\Z_p} Y \to B\Z_p$, we take a fiberwise Riemann metric on $E$ which coincides with $g_Y$ on the boundary. 
Let us denote $I^\lambda_\mu$ the Baraglia--Hekmati's $(S^1\times \Z_p)$-equivariant Conley index defined in \cite{Baraglia-Hekmati:2024-1}, where $\mu, -\lambda$ are taken to be sufficiently large, which will be reviewed below.



Choose a reference spin connection $A_0$ such that the associated connection on the determinant line bundle is flat.
As shown in \cite[Section 3.2]{Baraglia-Hekmati:2024-1}, for $g \in \Z_p$, we can choose a lift $\wt{g}\colon S \to S$ to the spinor bundle preserving $A_0$. 
Here we use the assumption that the $G$ action preserves the isomorphism class of $\mathfrak{t}$ and $b_1(Y)=0$.
Let  $G_\mathfrak{t}$ be the set of unitary automorphisms $u\colon S\to S$ preserving $A_0$ and lifting the $\Z_p$ action on $Y$.
Then we have an extension
\[
1\to S^1\to G_{\mathfrak{t}} \to \Z_p \to 1.
\]
This extension is always trivial, as shown in \cite[Section 5]{Baraglia-Hekmati:2024-1}. Therefore, we take a section $G_{\mathfrak{t}}\cong S^1\times \Z_p$.

From the $\Z_p$-action on $Y$, we have an action of $G_{\mathfrak{t}}$ on the global Coulomb slice
\[
V=\ker d^* \oplus \Gamma(S) \subset i\Omega^1(Y) \oplus \Gamma(S)
\]
and a formally self-adjoint linear operator
\[
l\colon V\to V;  \qquad (a, \phi) \mapsto (* da, D_{A_0}), 
\]
where $*$ means the Hodge star operator with respect to $g$ and $D_{A_0}$ means the spin$^c$ Dirac operator with respect the spin$^c$ connection $A_0$.
We take a finite-dimensional approximation $V^\mu_\lambda(g_Y)$ obtained as the direct sum of all eigenspaces of $l$ in the range $(\lambda, \mu]$, again which as acted by $G_{\mathfrak{t}}$. 
The gradient vector field $\operatorname{grad} CSD$ of the Chern--Simons Dirac functional can be written as $\grad CSD = l + c$, where $c$ is a compact map. 

By finite-dimensional approximation of the gradient flow equation with respect to $\operatorname{grad} CSD$, we obtain a $G_{\mathfrak{t}}$-equivariant Conley index 
\[
I^\mu_\lambda(g_Y)
\]
for sufficiently large real numbers $\mu, -\lambda$.
For the details of the construction, see \cite{Ma03, Baraglia-Hekmati:2024-1}.
Now, a metric-dependent equivariant Floer homotopy type is defined as 
\[
SWF(Y, \mathfrak{t}, g_Y):=\Sigma^{-V^0_\lambda(g_Y)}I^\mu_\lambda(g_Y)
\]
in certain equivariant stable homotopy category.

Now, we state our result on the families Seiberg--Witten theory associated to homotopy coherent $\Z_p$-actions. 

\begin{thm}\label{hc Bf inv}
From the above bundle $E$, we can construct a fiberwise $S^1$-equivariant continuous map
\[
 {\bf BF}_{E} \colon  \begin{CD}
     \mathrm{Th}^f(W_0 )  @>>> \mathrm{Th}^f(W_1) \wedge_{ (B\Z_p)_n}  (E \Z_p)_n\times_{\Z_p} I^\mu_\lambda  \\
  @VVV    @VVV \\
     (B\mathbb{Z}_p)_n     @>{\mathrm{id}}>>  (B\mathbb{Z}_p)_n ,  
  \end{CD}
\] where
\begin{itemize} 
\item[(i)] 
Let $(N,L)$ be an index pair to define the Baraglia--Hekmati's equivariant Conley index $I_{\lambda}^{\mu}$ so that
\[
I_{\lambda}^{\mu} = N/L,
\]
and let $\mathrm{Th}^f$ be the fiberwise Thom space parametrized by $(B\mathbb{Z}_p)_n$. 
\item[(ii)] The vector bundles $W_{0}, W_{1} \to BG$ satisfy that each $W_{i}$ is the direct sum of a real vector bundle $W_{i}(\R)$ and a complex vector bundle $W_{i}(\C)$ over $(B\mathbb{Z}_p)_n$:
\[
W_{i} = W_{i}(\R) \oplus W_{i}(\C).
\]
The $S^1$-actions on $W_{0}(\R)$ and $W_{1}(\R)$ are trivial and the $S^1$-actions on $W_{0}(\C)$ and $W_{1}(\C)$ come from the structure of $\C$-vector spaces. 
\item[(iii)]  The $S^{1}$-invariant part of the map ${\bf BF}_E$ is fiberwise homotopy equivalent. 
\item[(iv)] We have
\begin{align*}
\begin{split}
\rank_{\C}W_{0}(\C) - \rank_{\C}W_{1}(\C) 
&= \ind_{\C}{D^{+}_{\hat{A}_{b}}} + \dim_{\C} V^{0}_{\lambda}(\C)\\
&= \frac{c_{1}(\fraks)^{2} - \sigma(X)}{8} + n(Y,\frakt,g_Y)+ \dim_{\C} V^{0}_{\lambda}(\C),  
\end{split}
\end{align*}
where the notation $n(Y,\frakt,g_Y)$ is used with the same meaning as in \cite[The equation (6)]{Ma03}, $V^\mu_\lambda (\R)$ and $V^\mu_\lambda (\C)$ means the eigenspaces corresponding to $i$-valued $1$-form part and spinor part.  
\end{itemize}

\end{thm}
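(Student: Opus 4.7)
The plan is to imitate Bauer--Furuta's construction fiberwise over the finite skeleton $(B\Z_p)_n$, using that the family $E$ restricted to this skeleton is a compact smooth fiber bundle and then matching the boundary behavior with Baraglia--Hekmati's $(S^1 \times \Z_p)$-equivariant Conley index. The first step is to equip the bundle $E\vert_{(B\Z_p)_n}$ with fiberwise geometric data. We choose a fiberwise Riemannian metric extending the pulled-back metric $g_Y$ on the boundary family $E\Z_p\times_{\Z_p} Y$, a fiberwise reference spin$^c$ connection $A_0$ (whose induced determinant connection is flat on the boundary), and a fiberwise double Coulomb gauge slice. Because the monodromy of $E$ over $B\Z_p$ preserves $[\fraks]$, the spin$^c$ structure extends to a family spin$^c$ structure on $E$; over the closed boundary family this is strictly $\Z_p$-equivariant by hypothesis. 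Choosing everything in a $\Z_p$-equivariant way on the boundary and extending to the interior yields a smooth parameterized Seiberg--Witten map
\[
\mu \colon \cV_0 \to \cV_1
\]
of infinite-rank Hilbert bundles over $(B\Z_p)_n$, whose fiber over $\ast$ is the usual SW map for $(X,\fraks)$ and whose boundary restriction carries a fiberwise $G_\frakt$-action.

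The second step is parametrized finite-dimensional approximation. Over the compact base $(B\Z_p)_n$ (this is where the restriction to a finite skeleton matters, to get uniform estimates and a global spectral cutoff), I would take large enough $\mu,-\lambda$ and consider the fiberwise eigenspaces of the linearized operator in the range $(\lambda,\mu]$; by standard arguments, for $\mu,-\lambda$ away from the spectrum these assemble into finite-rank $S^1$-equivariant vector bundles $W_0,W_1 \to (B\Z_p)_n$, each of which decomposes into a trivially $S^1$-acted form part $W_i(\R)$ and a complex spinor part $W_i(\C)$. The finite-dimensional Seiberg--Witten map is then an $S^1$-equivariant fiberwise proper map $\tilde\mu \colon W_0 \to W_1$ over $(B\Z_p)_n$, defined after composing with cutoffs in the usual Bauer--Furuta way; Manolescu's and Khandhawit's boundary analysis, applied fiberwise, shows that the chunk of $W_0$ outside a large ball maps into a neighborhood of the fiberwise index pair $(N,L)$ pulled back from the boundary Borel construction. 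One-point compactification yields the desired map into $\Th^f(W_1) \wedge_{(B\Z_p)_n} (E\Z_p)_n\times_{\Z_p} N/L$.

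The third step is to verify the asserted properties. Property (iii) is the standard observation that on $S^1$-fixed points the spinor equation decouples, so $\tilde\mu^{S^1}$ is essentially the linear Dirac operator, which one deforms through linear Fredholm maps to an isomorphism of the $\R$-summands, giving fiberwise $S^1$-fixed homotopy equivalence. Property (iv) follows from the families index theorem applied to the fiberwise Dirac operator $D_{\hat A_b}^+$ together with the relation $\rank_\C W_0(\C) - \rank_\C W_1(\C) = \ind_\C D_{\hat A_b}^+ + \dim_\C V^0_\lambda(\C)$ coming from the fact that the difference in spinor dimensions between the two finite-dimensional models equals the complex index of the full Dirac operator plus the boundary correction; the formula $(c_1(\fraks)^2 - \sigma(X))/8 + n(Y,\frakt,g_Y)$ is then just the usual Atiyah--Patodi--Singer computation on each fiber, using $b_1(X)=0$ and $b_2^+(X)=0$ to guarantee the relevant terms vanish and that the index is constant in the family.

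The main obstacle I expect is arranging the fiberwise finite-dimensional approximations to actually form vector bundles $W_0,W_1$ over $(B\Z_p)_n$ in a manner that is compatible, on the boundary piece, with Baraglia--Hekmati's choices, so that the target of ${\bf BF}_E$ is genuinely the fiberwise smash product with $(E\Z_p)_n \times_{\Z_p} I^\mu_\lambda$ rather than some twisted version. The delicate point is the spectral jumping of $l$ as one varies over the base: this is handled by choosing $\mu,-\lambda$ outside the (compact) union of spectra over the base (uniformly achievable since the base is compact) and taking the $(\lambda,\mu]$-eigenspace bundle, exactly as in Baraglia--Konno's parametrized Bauer--Furuta construction \cite{Baraglia-Konno:2022-1}. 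Once this is set up equivariantly on the boundary family by using Baraglia--Hekmati's $G_\frakt$-equivariant finite-dimensional model as the template and extending inward, all remaining verifications are fiberwise versions of the standard Bauer--Furuta and Manolescu constructions.
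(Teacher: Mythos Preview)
Your broad outline is correct, but there are two concrete gaps that would cause the argument to fail as written.

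First, your proposed finite-dimensional approximation on the 4-manifold side is not the right mechanism. You suggest taking the $(\lambda,\mu]$-eigenbundle of the linearized operator over the base and handling spectral jumping by choosing $\mu,-\lambda$ outside the union of spectra. But the fiberwise 4-manifold metric (and hence the fiberwise operator $L_b$) genuinely varies over $(B\Z_p)_n$, and there is no reason to expect a global spectral gap; so eigenspace cutoffs need not assemble into a vector bundle. The paper instead follows the Bauer--Furuta/Khandhawit recipe: one chooses an \emph{arbitrary} increasing sequence of finite-rank subbundles $W_1^m$ of the target Hilbert bundle satisfying a transversality condition with $\mathrm{Im}\,L^0_b$, and then \emph{defines} $W_0^m := (L^{\mu_m})^{-1}(W_1^m \oplus V^{\mu_m}_{\lambda_m})$. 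This is robust under variation of the interior metric. Conversely, your worry about spectral jumping of the boundary operator $l$ is a non-issue: because the boundary family is the Borel construction of a \emph{strict} $\Z_p$-action, the operator $l$ is literally the same on every fiber, so $V^\mu_\lambda$ is already a bundle (indeed the $\Z_p$-Borel construction of a fixed vector space). What does require an argument is identifying the fiberwise Conley index---which one first constructs using Manolescu's index-pair machinery applied to the \emph{fiber-preserving} flow on the total space---with the Borel construction $(E\Z_p)_n\times_{\Z_p} I^\mu_\lambda$; the paper handles this via a separate lemma showing the flow-induced equivalence of Conley indices is fiberwise and $S^1$-equivariant.

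Second, your explanation of property (iii) is incorrect. On the $S^1$-fixed locus the spinor is zero, so the remaining map is not ``essentially the linear Dirac operator'' but rather the form-side operator $L^0_{b,\R} = (d^+,\,p^0_{-\infty}r_b)$ on the double Coulomb slice. The fiberwise homotopy equivalence of ${\bf BF}_E^{S^1}$ comes from showing this operator is an \emph{isomorphism} on each fiber, which in turn uses the hypotheses $b_1(X)=0$ (so $\Ker L^0_{b,\R}\cong H^1(X;\R)=0$) and $b_2^+(X)=0$ (so $\Cok L^0_{b,\R}\cong H^+(X;\R)=0$), via Khandhawit's identification. Your sketch does not invoke $b_2^+(X)=0$ here, but it is essential.
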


The map ${\bf BF}_E$ is called the \emph{homotopy coherent Bauer--Furuta invariant}. We do not need to treat invariance problems on choices of a fiberweise spin$^c$ structures in this paper. Thus, we do not discuss it here although there seems not to be any issue except for a choice of lifting of family spin$^c$ structures.

\begin{rem}
    If the boundary family comes from the trivial action, the corresponding Bauer--Furuta invariant coincides with the family Bauer--Furuta invariant constructed in \cite{KT22}, which is a family version of \cite{BF04, Ma03, Kha15}.  
    
\end{rem}

\subsection{Construction of Bauer--Furuta type invariant}

We start with several basic notations related to 4-manifold family and Seiberg--Witten theory.  

\subsubsection{Notations of spaces of sections}
We construct the map ${\bf BF}_{E}$. We fix the same geometric data as in \Cref{subsec:BF_boundary}, yielding a family
    $$(X, \mathfrak{s}) \to E \to B\Z_p,   $$
which induces a family of spin$^c$ 3-manifolds 
\[
(Y, \mathfrak{t}) \to E_\partial \to B\Z_p
\]
that is isomorphic to a Borel construction $E\Z_p \times_{\Z_p} (Y, \mathfrak{t})$. Moreover, fix a fiberwise metric $g_E$ on $E \to B\Z_p$ such that, near a collar neighborhood $[-\varepsilon, 0 ] \times  (E_\partial)_b$ of $(E_\partial)_b$,
 \[
 g_E|_{[-\varepsilon, 0 ] \times  (E_\partial)_b}  = \pi^{\ast}_b g_{Y,b}+ dt^2,
 \]
 where $g_{Y,b}$ is a fiberwise metric $\partial (E_b) \to B\Z_p$ and $\pi_b \colon E_b \to \partial (E_b)$ is the projection. Moreover, the boundary Riemann metric is assumed to be obtained from the Borel construction with respect to the $\Z_p$-action.
 The existence of such a fiberwise metric is verified by taking a fiberwise metric on the Borel construction and extending it to $X$ fiberwise.






 Let $\{\widehat{A}_b\}_{b \in B\Z_p}$ be a fiberwise reference spin$^c$-connection on $E$ such that $\widehat{A}_b|_{(E_\partial)_b}$ is a fiberwise spin$^c$ connection on $Y_b$ for any $b \in B\Z_p$ comes from a Borel construction of a $\Z_p$-invariant connection. 
Once we fix the data $(E, g_E)$, the following families of vector bundles over $B\Z_p$
\[
S^+_E,\quad  S^-_E,\quad i \Lambda^*_E,\quad i\Lambda^+_E, \quad S_{E_\partial} , \quad i \Lambda^*_{E_\partial}
\]
are associated.
The restrictions of them over $b \in BG$ are the positive and negative spinor bundles with respect to $(g_{E_b}, \fraks)$, and $i\Lambda^*_{X}$ and  $i\Lambda^+_{X}$ with respect to $g_{E_b}$ respectively, where $\Lambda^+_{X}$ denotes the space of self-dual part of $i\Lambda^2_X$. The induced bundles from $ E_\partial$ are similar, $S_{E_\partial}$ is a family spinor bundle, and $i \Lambda^*_{E_\partial}$ is the induced bundle whose fiber is $i \Lambda^*_Y$. 
We shall use the notation
\[
L^2_k (S^+_E),\quad  L^2_k(S^-_E),\quad  L^2_k (i\Lambda^*_E),\quad  L^2_k  (i \Lambda^+_E), \quad L^2_{k-\frac{1}{2}} ( S_{E_\partial}) , \quad L^2_{k-\frac{1}{2}} ( i \Lambda^*_{E_\partial})
\]
to denote the spaces of fiberwise $L^2_k$-sections and $L^2_{k-\frac{1}{2}}$-sections of the bundles above. 
In order to obtain the Fredholm property for a certain operator, we shall use a subspace $L^2_{k} ( i\Lambda^1_{E})_{CC}$ of $L^2_{k} ( i\Lambda^1_{E})$ defined by
\[
L^2_k (i\Lambda^1_E)_{CC} :=\bigsqcup_{b \in BG}  \Set{ a \in L^2_k (i  \Lambda ^1_{E_b})   \mid d^* a =0,\  d^*{\bf t}a=0 }, 
\]
where ${\bf t}$ is the restriction as differential forms along the inclusion $Y= (E_\partial)_b  \inc E_b$. 
This gauge fixing condition is called the {\it double Coulomb condition} and was introduced by Khandhawit~\cite{Kha15}.
We define 
\[
{\bf V} (E_\partial) := \bigsqcup_{b \in BG}  \Set{ a \in L^2_{k-\frac{1}{2}} (i  \Lambda ^1_{(E_\partial)_b})   \mid d^* a =0 } \oplus L^2_{k-\frac{1}{2}} ( S_{E_\partial}) 
\]
which equips a natural fiber bundle structure on $B\Z_p$.

\subsubsection{Seiberg--Witten equation in our setting}
In general, taking a fiberwise finite-dimensional approximation of $V(E_\partial)$ is impossible if there is a non-trivial spectral obstruction to get a spectral section, see \cite[Section 2]{SS21}. So, we need to care about such an obstruction. That is why we are assuming that the boundary action is strict. 
Since we assume there is an isomorphism 
\[
E_{\partial } \cong E\Z_p \times_{\Z_p}  (Y, \frak{t}), 
\]
there is an isomorphism as Hilbert bundles: 
   \[
 \begin{CD}
      {\bf V}(E_\partial)   @>{\Phi}>>  V(Y)\times_{\Z_p}  E\Z_p \\
  @VVV    @VVV \\
     B\Z_p     @>{\mathrm{id}}>> B\Z_p  .  
  \end{CD}
\]

For the compactness of Seiberg--Witten moduli space, from now on, we shall restrict all families to its $n$-skeleton $(B\Z_p)_n$; its definition was given in \Cref{sec:homotopycoherentactions}. We denote its preimage under the covering map $E\Z_p \rightarrow B\Z_p$ by $(E\Z_p)_n$.

Via this identification $\Phi$, we take a fiberwise finite-dimensional approximation of $V(E_\partial)$. For a given real number $\mu$, we define 
\[
{\bf V}^\mu_{-\infty} (E_\partial) :=(E\Z_p)_n \times _{\Z_p}  V^\mu_{-\infty} (Y)  \subset (E\Z_p)\times_{\Z_p}  V(Y). 
\]
 This construction depends on $n$ but we abbreviate it for the simple notations. 

Now for a real number $\mu$, we have the fiberwise Seiberg--Witten map over a slice
\[
\mathcal{F}^\mu : L^2_{k} ( i\Lambda^1)_{CC}  \oplus L^2_k ( S^+_E)  \to     L^2_{k-1} ( i\Lambda^+)\oplus L^2_{k-1} ( S^-_E)  \oplus {\bf V}^\mu_{-\infty} (E_\partial)
\]
defined by 
\[
\mathcal{F}^\mu \left( (A_b, \Phi_b )_{b \in (B\mathbb{Z}_p)_n} \right) = \left(  \rho_b (F^+ (A_b)) -  (\Phi_b, \Phi_b)_0 , {D}_{\widehat{A}_b+ A_b} ( \phi), p^\mu_{-\infty } \Phi r_b ( A_b, \Phi_b ) \right)_{b \in (B\mathbb{Z}_p)_n} , 
\]
where $F^+ (A_b)$ is the self-dual part of the curvature of a fiberwise connection $A_b$, $\rho_b$ is the Clifford multiplication, ${D}_{\widehat{A}_b+ A_b}$ is the fiberwise Dirac operator with respect to a connection $\widehat{A}_b+ A_b$, and 
\[
r_b \colon L^2_{k} ( i\Lambda^1)_{CC}  \oplus L^2_k ( S^+_E) \to  {\bf V}(E_\partial)
\]
 is the fiberwise restriction. Here, $p^\mu_{-\infty}$
 is the projection. 
 
We decompose $\mathcal{F}^{\mu}$ as the sum of a fiberwise linear operator 
\begin{align*}
L^{\mu} = 
\left\{L^\mu_b= (d^+, D_{\widehat{A}_b },  p^\infty_{-\infty} r_b )\right\}_{b \in (B\mathbb{Z}_p)_n}
\end{align*}
 and a fiberwise quadratic part
 \[
c^{\mu}
= \left\{ c^\mu_b  = (- (\Phi_b \otimes \Phi^*_b)_0, \rho (A_b) (\Phi_b), 0) \right\}_{b \in (B\mathbb{Z}_p)_n}.
 \] 
 We often use a decomposition of the operator $L^\mu_b$ for each $b$ as the sum of the real operator 
 \[
 L^\mu_{b, \R}= \left(d^+, 0,  (p^\mu_{-\infty})_{\R}r_b \right) \colon L^2_{k} ( i\Lambda^1_{E_{b}})_{CC} \to     L^2_{k-1} ( i\Lambda^+_{E_{b}})\oplus V^\mu_{-\infty}(\R)_b
 \]
 and the complex operator 
 \[
 L^\mu_{b, \C} =\left(0 , D_{\widehat{A}_b },  (p^\mu_{-\infty})_{\C}r_b \right) \colon  L^2_k ( S^+_{E_{b}})  \to    L^2_{k-1} ( S^-_{E_{b}})  \oplus V^\mu_{-\infty}(\C)_b,
 \]
where $(p^\mu_{-\infty})_{\R}$ and $(p^\mu_{-\infty})_{\C}$ denote the real and complex parts of $p^\mu_{-\infty}$ respectively.
  
It is checked in \cite[Proposition 2]{Kha15} that the fiberwise linear operator ${L}^\mu_b$ is Fredholm on each fiber and the Fredholm index is given by 
\[
2\ind_{\mathbb{C}}^{APS} D^+_{A_b}  - \dim V^\mu_0 , 
\]
 where $\ind_{\mathbb{C}}^{APS} D^+_{A_b}$ is the Fredholm index of $L^0_{b, \C}$ as a complex operator with respect to Atiyah--Patodi--Singer's boundary condition.  

The following lemma provides fundamental properties of the linear map $L^0_{b, \R} \colon  L^2_{k} ( i\Lambda^1_{E_{b}})_{CC}  \to   L^2_{k-1} ( i\Lambda^+_{E_{b}})\oplus V^0_{-\infty} (\R)$. 

\begin{lem}
\label{linear injection}
Under the assumption that $b_1(X)=0$ and  $b^+(X)=0$, 
the operator
\[
L^0_{b, \R} \colon  L^2_{k} ( i\Lambda^1_{E_{b}})_{CC}  \to   L^2_{k-1} ( i\Lambda^+_{E_{b}})\oplus V^0_{-\infty} (\R)
\]
is an isomorphism for any $b \in (B\mathbb{Z}_p)_n$.
\end{lem}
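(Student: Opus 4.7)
The plan is to show that $L^0_{b,\R}$ is Fredholm of index zero and injective; the two together yield the isomorphism.

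For Fredholmness and the index, note that the full linear operator $L^0_b$ has already been recalled to be Fredholm with real index $2\ind^{APS}_\C D^+_{\widehat{A}_b} - \dim V^0_0$, following \cite{Kha15}. Since $d^+$ does not touch spinors and $D_{\widehat{A}_b}$ does not touch 1-forms, and the boundary spectral projection respects the 1-form/spinor decomposition, the operator splits as $L^0_b = L^0_{b,\R} \oplus L^0_{b,\C}$, so that the real part is separately Fredholm. Its index is the APS index of the elliptic boundary value problem $(d^+, d^*)$ on $i$-valued 1-forms with boundary spectral condition defined by $l = *_Y d_Y$, which is the standard signature-type computation yielding $b^+_2(X) - b_1(X) = 0$ under the present hypotheses.

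For injectivity, suppose $a \in \ker L^0_{b,\R}$. Then $d^+ a = 0$ on $E_b$, and the boundary condition $(p^0_{-\infty})_\R r_b(a) = 0$ says that ${\bf t} a \in \ker d^*_Y$ lies in the strictly positive eigenspace of $l = *_Y d_Y$. A Stokes computation, together with orthogonality of self-dual and anti-self-dual 2-forms, gives
\begin{equation*}
\int_{E_b}\bigl(|d^+a|^2 - |d^-a|^2\bigr)\,\dvol \;=\; \int_{\partial E_b} a \wedge da \;=\; \langle {\bf t} a,\, l({\bf t} a)\rangle_{L^2(Y)}.
\end{equation*}
The left-hand side equals $-\|d^- a\|^2_{L^2}\leq 0$ because $d^+a=0$, while the right-hand side is $\geq 0$ by the spectral condition on ${\bf t}a$. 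Hence both vanish: $da = 0$ on $E_b$ and ${\bf t} a = 0$ on $\partial E_b$.

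Finally, since $H^1(E_b;\R) = 0$, we can write $a = df$ for some imaginary-valued function $f$. The Coulomb condition $d^*a = 0$ makes $f$ harmonic, while ${\bf t} a = 0$ forces $f|_{\partial E_b}$ to be locally constant, which after normalization we may take to be zero; a final integration by parts then gives $\|df\|^2_{L^2}=0$, so $a=0$. The main obstacle I anticipate is tracking sign and orientation conventions carefully enough in the boundary identity above that the APS-type condition $(p^0_{-\infty})_\R r_b(a) = 0$ translates into the non-negativity of $\langle {\bf t}a, l({\bf t}a)\rangle$; once that is in place, Fredholmness of index zero together with trivial kernel delivers the isomorphism.
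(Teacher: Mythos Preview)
Your proof is correct and takes a more hands-on route than the paper. The paper's own proof simply invokes \cite[Proposition~2]{Kha15}, which identifies $\mathrm{Ker}\,L^0_{b,\R}\cong H^1(X;\R)$ and $\mathrm{Coker}\,L^0_{b,\R}\cong H^+(X;\R)$; under $b_1=b^+=0$ both vanish and the isomorphism follows in one line. You instead split off the real summand, argue that its index is zero, and then prove injectivity directly via the Stokes identity $\|d^+a\|^2-\|d^-a\|^2=\langle{\bf t}a,\,l({\bf t}a)\rangle$. This is essentially a direct re-derivation of the kernel part of Khandhawit's identification in the special case $b_1=0$, so your approach is more self-contained for injectivity, while the paper's citation is shorter and also dispatches surjectivity without a separate index argument. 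One minor slip: from $\mathrm{Ker}\,\cong H^1$ and $\mathrm{Coker}\,\cong H^+$ the index is $b_1-b^+$, not $b^+-b_1$; of course both are zero here so it does not matter. Your caution about sign conventions is well-placed, but with the paper's conventions $l=*_Y d_Y$ and $V^0_{-\infty}$ denoting the non-positive spectral subspace, the signs do line up exactly as you wrote them.
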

\begin{proof}
In \cite[Proposition 2]{Kha15}, it is confirmed that there are natural isomorphisms: 
\[
\mathrm{Ker}\,  L^0_{b, \R} \cong H^1(X; \R) =\{0\} \qquad\text{ and }\qquad \mathrm{Coker}\, L^0_{b, \R}  \cong H^+ (X ;\R) =\{0\}
\]
on each point $b \in (B\mathbb{Z}_p)_n$.
Thus, we complete the proof. 
 \end{proof}

\subsubsection{Several analytical lemmas}

From now on, we follow the construction of a family version of Bauer--Furuta invariant basically proven in \cite{KT22}. Most of the parts of the construction are similar to the arguments given in \cite[Section 2.3]{KT22}.

In order to carry out finite-dimensional approximation, we take a sequence of finite-dimensional vector subbundles $W_1^m$ of  $L^2_{k-1} ( i\Lambda^+_{E})\oplus L^2_{k-1} ( S^-_E)$. 

\begin{lem} \label{fin dim app1} There exists a sequence of finite-dimensional vector subbundles $W_1^m$ over $(B\mathbb{Z}_p)_n$ in  $L^2_{k-1} ( i\Lambda^+_{E})\oplus L^2_{k-1} ( S^-_E)$
 such that 
 \begin{itemize}
 \item the sequence is an increasing sequence 
 \[
 W_1^0 \subset W_1^1 \subset W_1^2 \subset W_1^3 \subset \cdots \subset L^2_{k-1} ( i\Lambda^+_{E})\oplus L^2_{k-1} ( S^-_E), 
 \]
 \item the equality 
\begin{align}\label{trans}
\mathrm{Im}  \left( \mathrm{pr}_{ L^2_{k-1} ( i\Lambda^+_{E_b})\oplus L^2_{k-1} ( S^-_{E_b})} \circ L^0_{b}  \right) +   (W_1^0)_{b} = L^2_{k-1} ( i\Lambda^+_{E_b})\oplus L^2_{k-1} ( S^-_{E_b})
\end{align}
holds for all $b \in (B\mathbb{Z}_p)_n$, and
\item  the projection $\pr_{(W_1^m)_b } : L^2_{k-1} ( i\Lambda^+_{E_b})\oplus L^2_{k-1} ( S^-_{E_b}) \to (W_1^m)_b$ satisfies 
\[
\| \pr_{(W_1^m)_b } \gamma_b  - \gamma_b \|_{L^2_{k-1}} \to 0 \text{ as } n \to \infty
\]
 for any $\gamma_b \in  L^2_{k-1} ( i\Lambda^+_{E_b})\oplus L^2_{k-1} ( S^-_{E_b})$  and $b\in (B\mathbb{Z}_p)_n$.
\end{itemize}
\end{lem}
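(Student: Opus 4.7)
The plan is to exploit the compactness of the finite skeleton $(B\mathbb{Z}_p)_n$ together with a density argument in the space of fiberwise $L^2_{k-1}$-sections. I will first construct $W_1^0$ to secure the transversality condition \eqref{trans}, and then augment it by an exhausting increasing sequence of finite-rank subbundles in order to obtain the approximation property.

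For the transversality step, I rely on the fact that $L^0_b$ is fiberwise Fredholm, as recorded in \cite[Proposition 2]{Kha15}, so that the composition $\mathrm{pr} \circ L^0_b$ has closed image of finite codimension in $L^2_{k-1}(i\Lambda^+_{E_b}) \oplus L^2_{k-1}(S^-_{E_b})$. By upper semicontinuity of the cokernel dimension and compactness of $(B\mathbb{Z}_p)_n$, one can choose finitely many smooth global sections whose values at every point span a complement of $\mathrm{Im}(\mathrm{pr}\circ L^0_b)$; after a small generic perturbation to secure pointwise linear independence, they span a trivial subbundle $W_1^0$ realising \eqref{trans} simultaneously at every $b$.

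For the approximation property I would fix a countable family $\{s_i\}_{i \geq 1}$ of smooth global sections of $L^2_{k-1}(i\Lambda^+_E) \oplus L^2_{k-1}(S^-_E)$ whose values are dense in every fiber; such a family exists because $(B\mathbb{Z}_p)_n$ is a finite CW complex and the fibers are separable Hilbert spaces, so one can combine local trivializations over a finite good cover with a partition of unity. Then I define $W_1^m$ inductively, declaring $W_1^m$ to be a finite-rank subbundle containing both $W_1^{m-1}$ and the new section $s_m$, again after a generic perturbation that preserves the subbundle structure. Fiberwise density of $\bigcup_m (W_1^m)_b$ in $L^2_{k-1}(i\Lambda^+_{E_b}) \oplus L^2_{k-1}(S^-_{E_b})$ will then force strong convergence $\mathrm{pr}_{(W_1^m)_b} \gamma_b \to \gamma_b$ for every $\gamma_b$, since orthogonal projection onto an increasing dense chain of closed subspaces converges strongly to the identity.

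The main obstacle throughout is maintaining an honest constant-rank subbundle structure at every stage when assembling global sections, because naively their span might drop rank on some locus of the base. Compactness of $(B\mathbb{Z}_p)_n$ resolves this: it permits working in finitely many local trivializations and performing generic perturbations that avoid the codimension-one degeneracy loci, in the spirit of the finite-dimensional approximations used in \cite{KT22, Baraglia-Hekmati:2024-1}.
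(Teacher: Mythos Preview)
Your sketch is correct and is essentially what the paper invokes: the paper's own proof is a one-line citation to \cite[Lemma~2.11]{KT22}, noting only that the triviality hypothesis on the boundary family assumed there is never actually used. Your argument---Fredholmness of $L^0_b$ plus compactness of $(B\Z_p)_n$ to build $W_1^0$, followed by an exhausting sequence of sections with fiberwise dense span---is the standard construction underlying that cited lemma.

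One minor remark: your phrase ``codimension-one degeneracy loci'' is not quite right in this infinite-dimensional setting. The locus where $k$ sections of a Hilbert bundle become linearly dependent has infinite codimension, which is precisely why the generic-perturbation step is unproblematic; you could alternatively bypass perturbations altogether by invoking Kuiper's theorem to trivialize the Hilbert bundle over the compact base and then take the $W_1^m$ to be constant subspaces in that trivialization. Either route works and is consistent with what \cite{KT22} does.
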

\begin{proof}
The proof is exactly the same as in \cite[Lemma 2.11]{KT22}. In the proof of \cite[Lemma 2.11]{KT22}, the triviality of the induced $3$-manifold family from $E$ is assumed but that proof does not rely on the triviality. 
\end{proof}

Take sequences of numbers $\lambda_m$ and $\mu_m$ such that $\lambda_m \to -\infty$ and $\mu_m \to \infty$ as $m \to \infty$, and take a sequence of finite-dimensional vector subbundles $W_1^m$ over $(B\mathbb{Z}_p)_n$ of $L^2_{k-1}(i\Lambda^+_E) \oplus L^2_{k-1}(S^-_E)$ satisfying the conclusions of \cref{fin dim app1}.
Let us define 
\[
W_0^m:= ( {L}^{\mu_m} )^{-1} ( W_1^m \oplus {{\bf V}_{\lambda_m}^{\mu_m}}  ) \to (B\mathbb{Z}_p)_n .
\]
 Then for all sufficiently large $m$, we see that $W_0^m \to (B\mathbb{Z}_p)_n$ are finite dimensional subbundles of $ L^2_{k} ( i\Lambda^1_E)_{CC}  \oplus L^2_k ( S^+_E)$. The following lemma tells us the injectivity of $L^\mu_b$ for a sufficiently large $\mu$: 
\begin{lem} \label{inj} There exists $\mu_0>0$ such that, for any $\mu$ with $\mu>\mu_0$ and for any $b \in (B\Z_p)_n$, 
 $L^\mu_b$ is injective. 
 \end{lem}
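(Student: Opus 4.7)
The plan is to split the operator as $L^\mu_b = L^\mu_{b,\R} \oplus L^\mu_{b,\C}$ and handle the two summands separately, using the compactness of the finite CW complex $(B\Z_p)_n$.

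The real part is the easy case. If $a \in \ker L^\mu_{b,\R}$ then $d^+ a = 0$ and $p^\mu_{-\infty}(r_b a)_{\R} = 0$; since vanishing of the spectral projection onto $(-\infty,\mu]$ automatically implies vanishing of the projection onto $(-\infty,0]$, we also get $L^0_{b,\R}(a) = 0$, and \Cref{linear injection} then forces $a = 0$. Hence $L^\mu_{b,\R}$ is injective for every $\mu \geq 0$ and every $b$, with no need for any largeness assumption on $\mu$.

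For the complex part $L^\mu_{b,\C}$, I would argue by contradiction, aiming to produce a uniform $\mu_0$. Suppose there exist sequences $\mu_m \to \infty$, $b_m \in (B\Z_p)_n$, and $\Phi_m \in L^2_k(S^+_{E_{b_m}})$ with $\|\Phi_m\|_{L^2_k} = 1$, $D_{\widehat{A}_{b_m}}\Phi_m = 0$, and $p^{\mu_m}_{-\infty}(r_{b_m}\Phi_m)_{\C} = 0$. The standard APS-type elliptic estimate for the Dirac operator,
\[
\|\Phi\|_{L^2_k} \leq C\bigl(\|D\Phi\|_{L^2_{k-1}} + \|p^0_{-\infty}(r\Phi)_{\C}\|_{L^2_{k-1/2}} + \|\Phi\|_{L^2}\bigr),
\]
holds with a constant $C$ uniform in $b$ by smoothness of $(g_{E_b}, \widehat{A}_b)$ over the compact base $(B\Z_p)_n$. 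Applied to $\Phi_m$ this yields $\|\Phi_m\|_{L^2} \geq 1/C$. Passing to a subsequence with $b_m \to b_\infty$ and invoking Rellich compactness, I extract a limit $\Phi_\infty$ in $L^2$ with $\|\Phi_\infty\|_{L^2} \geq 1/C > 0$, and elliptic regularity promotes this to $L^2_k$-convergence to a solution of $D_{\widehat{A}_{b_\infty}}\Phi_\infty = 0$.

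It remains to show $r_{b_\infty}\Phi_\infty = 0$, after which Aronszajn-type unique continuation for the Dirac operator (vanishing on $\partial X$ plus $D\Phi = 0$ forces $\Phi \equiv 0$ on a connected $X$) yields $\Phi_\infty \equiv 0$ and the desired contradiction. Each boundary trace $r_{b_m}\Phi_m$ is spectrally supported in eigenvalues above $\mu_m$, and for such spectrally concentrated sections one has $\|f\|_{L^2} \leq \mu_m^{-(k-1/2)}\|f\|_{L^2_{k-1/2}}$; combined with the uniform bound on $\|r_{b_m}\Phi_m\|_{L^2_{k-1/2}}$ supplied by the trace theorem, this gives $r_{b_m}\Phi_m \to 0$ strongly, hence $r_{b_\infty}\Phi_\infty = 0$. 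The main technical obstacle I anticipate is ensuring that the elliptic estimates, trace inequalities, and spectral bounds are genuinely uniform in the parameter $b$; this should follow routinely from the smoothness of the family together with the compactness of $(B\Z_p)_n$, but is where the parametric setup really has to be used.
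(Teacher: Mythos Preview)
Your argument is correct. The paper itself omits the proof of this lemma, relying implicitly on \cite{KT22} (the paragraph opening the subsubsection containing this lemma notes that ``most of the parts of the construction are similar to the arguments given in \cite[Section~2.3]{KT22}''). Your decomposition $L^\mu_b = L^\mu_{b,\R}\oplus L^\mu_{b,\C}$, the reduction of the real summand to \Cref{linear injection}, and the compactness/unique-continuation argument for the complex summand constitute the standard route; you also correctly identify that the only genuinely parametric ingredient is the uniformity in $b$ of the elliptic and trace constants, which is exactly what compactness of $(B\Z_p)_n$ provides.
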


We have an isomorphism 
\[
W_1^m + {V^{\mu_m}_{\lambda_m}} + \mathrm{Ker}\, L^{\mu_m} \cong W_0^m + \mathrm{Coker}\, L^{\mu_m}
\]
between the virtual vector bundles.
As it is mentioned in \cite[page 923]{Ma03}, 
\[
\mathrm{Coker}\, L^{\mu_m} \cong \mathrm{Coker}\, L^{0}  \oplus  \mathrm{Coker}\, (p_{0}^{\mu_m} \circ  \pr_{\Ker d^{\ast}}). 
\]
From \cref{inj}, we know that, for a sufficiently large $m$, the operator $L^{\mu_m}_b= (d^+, D_{\widehat{A}_b },  p^\mu_{-\infty} r_b )$ is injective for every $b\in (B\mathbb{Z}_p)_n$. Thus 
\[
p_{0}^{\mu_m} \circ  \pr_{\Ker d^{\ast}}  \circ i^{\ast} : \Ker L^{0} \to {{\bf V}_{0}^{\mu_m}}
\]
 is fiberwise injective, and we have an identification
\[
\Ker L^0-  \mathrm{Coker}\, L^0 + \mathrm{Coker}\, L^{\mu_m} \cong {{\bf V}^{\mu_m}_0},
\]
and thus have
\begin{align}\label{decomp}
W_1^m + {\bf V}^{\mu_m}_{\lambda_m} + \Ker L^0- \mathrm{Coker}\, L^0 \cong   W^m_0 + {{\bf V}^{\mu_m}_0}
\end{align}
as virtual vector bundles over $(B\Z_p)_n$.

Applying the projection, we obtain a family of maps
\[
\pr_{W_1^m\times {\bf V}^{\mu_m}_{\lambda_m}}  \circ \mathcal{F}^{\mu_{m}} |_{W_0^m} :W_0^m \to W_1^m \times {{\bf V}_{\lambda_m}^{\mu_m}}
\]
whose $S^1$-invariant part is given by 
\[
(\pr_{W_1^m\times {\bf V}_{\lambda_m}^{\mu_m}}  \circ \mathcal{F}^{\mu_{m}} |_{W_0^m})^{S^1}  :W_0^m(\R)  \to W_1^m(\R) \times  {\bf V}^{\mu_m}_{\lambda_m}(\R).
\]
This induces a map 
\begin{align}\label{fin app}
\pr_{W_1^m\times {\bf V}_{\lambda_m}^{\mu_m} }  \circ \mathcal{F}^{\mu_{m}} |_{W_0^m} :B(R,W_0^m) \to (W_1^m \times  {\bf V}_{\lambda_m}^{\mu_m} )^{+_{(B\mathbb{Z}_p)_n}} , 
\end{align}
where $+_{B}$ denotes the fiberwise one-point compactification. 

We endow the vector bundle ${\bf V}_{\lambda_m}^{\mu_m}$ with a $\R$-action obtained as the flow equation 
\[
\frac{\partial}{\partial t} y (t) = - (l + p_{\lambda_m}^{\mu_m} c) (y(t)). 
\]
Note that, for a large $R\gg 0$, the disk bundle $B(R; {\bf V}^{\mu_m}_{\lambda_m} )$ is an isolating neighborhood of this dynamical system. 
The following lemma gives a sufficient condition to get a finite-dimensional approximation of the Seiberg-Witten map in our situation. 
To obtain a suitable index pair used for a Bauer--Furuta-type invariant from \eqref{fin app}, we need a certain compactness type result, described as follows.

\begin{lem}
\label{lem: ensure the existence of Conley index}
The following statements hold. 
\begin{itemize}
    \item
    Set
\begin{align*}
\wt{K}_{1 }  := B(R,W_0^m) \cap ( (\pr_{W_1^m}  \circ \mathcal{F}^{\mu_m})^{-1}    B(\epsilon_m, W_1^m) ) 
\end{align*}
and 
\begin{align*}
\wt{K}_{2} := S(R,W_0^m) \cap ( (\pr_{W_1^m }  \circ \mathcal{F}^{\mu_m})^{-1}    B(\epsilon_m, W_1^m) )  
\end{align*}
for a sequence of positive real numbers $\{ \epsilon_m \}_{ m \in \Z_{>0}} $ with $ \epsilon_m \to 0$ as $m\to \infty$. 
For sufficiently large $R, R'$ and $m$, the compact sets
\[
K_{1 }  := p_{{\bf V}_{\lambda_m}^{\mu_m} }  \circ \mathcal{F}^{\mu_m}(\wt{K}_{1})
\qquad \text{ and } \qquad
K_{2} :=  p_{{\bf V}_{\lambda_m}^{\mu_m} }  \circ \mathcal{F}^{\mu_m}(\wt{K}_{2})
\]
satisfy the assumption of \cite[Theorem~4]{Ma03}, \cite[Lemma~A.4]{Kha15} for $A:=B(R' ; {\bf V}^{\mu_m}_{\lambda_m} )$, i.e.\ the following conditions hold: 
\begin{itemize}
\item[(i)]  if $x \in K_{1} \cap A^+ $, then $([0,\infty) \cdot x) \cap \partial A=\emptyset$, and 
\item[(ii)] $K_{2} \cap A^+  = \emptyset $, where 
$$A^{+} := \Set{x \in A \mid \forall t>0,\  t \cdot x \in A}$$
for a subset $A$ in ${\bf V}_{\lambda_m}^{\mu_m}$. 
\end{itemize}


\item  Let $\{ x_m\}_{  n\in \Z_{>0}} $ be a bounded sequence in $L^2_{k} ( i\Lambda^1_E)_{CC}  \oplus L^2_k (S^+_{E} ) $ such that 
\[
L^{\mu_m}_{b_m} x_m \in    W_1^m ,\qquad  p^{\mu_m}_{-\infty} r_{b_m} x_m\in   {\bf V}_{\lambda_m}^{\mu_m},  \qquad \text{ and } \qquad (L^{\mu_m}_{b_m}  + p^{\mu_m}_{-\infty}  C_{b_m}  ) (x_m) \to 0 
\]
 in $L^2_k$-norm, where $b_m$ is the corresponding base point of $x_m$ in $(B\Z_p)_n$.
  We also suppose that there exists a sequence of approximated half trajectories $y_m : [0,\infty) \to {\bf V}_{\lambda_m}^{\mu_m} $ satisfying
\[
\frac{\partial}{\partial t} y_m (t) = - (l + p_{\lambda_m}^{\mu_m} c) (y(t))  \qquad \text{ and }\qquad y_m(0) =p^{\mu_m}_{-\infty} r_{b_m}  x_m. 
\]
Then, after taking a subsequence, the sequence $\{ b_m \}$ converges a point $b_\infty \in (B\Z_p)_n$, the sequence $\{x_m\}$ converges to a solution $x_\infty$ to the Seiberg--Witten equations for $E_{b_\infty}$ and there exists a Seiberg--Witten half trajectory $y_\infty$ satisfying $\frac{\partial}{\partial t} y (t) = - (l +  c) (y(t)) $,  $y_\infty(0)=r_{b_\infty}  x_\infty$ and $y_m(t)$ converges $y_\infty(t)$ for all $t$ in $L^2_{k- \frac{1}{2}}$. 
\end{itemize}

\end{lem}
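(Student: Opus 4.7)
\medskip

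\noindent\textbf{Proof plan.} The argument will mirror the strategy of \cite[Section 2.3]{KT22} (itself a family version of \cite{Ma03, Kha15}), with one crucial simplification at the boundary: because $E_\partial \cong E\Z_p \times_{\Z_p} (Y,\mathfrak t)$ is honestly Borel-constructed, the identification $\Phi$ identifies ${\bf V}(E_\partial)$ with the trivial Hilbert bundle $(E\Z_p)_n \times_{\Z_p} V(Y)$, so the fiberwise gradient flow on ${\bf V}^{\mu_m}_{\lambda_m}$ is simply the flow of Manolescu in each fiber. The base $(B\Z_p)_n$ is a finite CW complex and hence compact, so all uniform estimates in the $X$-direction will be obtained by local trivialization combined with compactness of the base.

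I will prove the second bullet first, since it powers the first. Given a bounded sequence $\{x_m\}$ with base points $b_m \in (B\Z_p)_n$, pass to a subsequence so that $b_m \to b_\infty$. Choose a small contractible neighborhood $U \ni b_\infty$ over which the family $E$ and all the associated bundles trivialize; then the sequence $\{x_m\}$ becomes a bounded sequence in $L^2_k(i\Lambda^1_{E_{b_\infty}})_{CC} \oplus L^2_k(S^+_{E_{b_\infty}})$ of approximate solutions to the Seiberg--Witten equations on $E_{b_\infty}$. Apply Khandhawit's convergence argument (\cite[Proof of Theorem 1]{Kha15}) to the restrictions $r_{b_m}x_m$ to extract strong $L^2_{k-\frac12}$-convergence on the boundary, and then use the Seiberg--Witten equations to bootstrap to $L^2_k$-convergence of $x_m$ itself, yielding the limit solution $x_\infty$. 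The convergence of the approximate half-trajectories $y_m$ to an honest Seiberg--Witten half-trajectory $y_\infty$ with $y_\infty(0) = r_{b_\infty}x_\infty$ follows verbatim from Manolescu's argument \cite[Proof of Theorem 1]{Ma03}, since $\Phi$ has already reduced the dynamics to the fiberwise Manolescu flow on $V(Y)$.

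For the first bullet, I will argue by contradiction: suppose condition (i) fails for every $(R,R',m)$ in a suitable sequence going to $(\infty,\infty,\infty)$. Then there exist $m_j \to \infty$, points $\wt{x}_{m_j} \in \wt{K}_1$ with image $x_{m_j} = \pr_{{\bf V}^{\mu_{m_j}}_{\lambda_{m_j}}}\mathcal F^{\mu_{m_j}}(\wt{x}_{m_j}) \in A^+$, and times $t_{m_j} \ge 0$ with $t_{m_j}\cdot x_{m_j} \in \partial A$. Apply the convergence statement (proved above) to $\{\wt{x}_{m_j}\}$ and to the translated half-trajectories $y_{m_j}(t) = (t+t_{m_j})\cdot x_{m_j}$; the hypothesis $\wt{x}_{m_j}\in \wt{K}_1$ together with $\epsilon_m \to 0$ supplies the required asymptotic vanishing of $(L^{\mu_{m_j}}_{b_{m_j}} + p^{\mu_{m_j}}_{-\infty} C_{b_{m_j}})(\wt{x}_{m_j})$. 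One obtains a genuine Seiberg--Witten solution $\wt{x}_\infty$ on $E_{b_\infty}$ and a genuine half-trajectory $y_\infty$ reaching $\partial B(R')$. But Khandhawit's a priori bound \cite[Proposition 3]{Kha15} for Seiberg--Witten half-trajectories on a compact $\mathrm{spin}^c$ $4$-manifold with boundary provides a uniform $L^2_{k-\frac12}$ bound depending only on the geometry of $(X_{b_\infty}, \mathfrak s)$; by compactness of $(B\Z_p)_n$ this bound can be taken uniform in $b \in (B\Z_p)_n$. Choosing $R'$ larger than this uniform bound gives the contradiction. The verification of (ii) is entirely analogous, using the fact that any point of $\wt{K}_2 \subset S(R,W_0^m)$ whose image lies in $A^+$ would, upon taking limits, produce a genuine Seiberg--Witten solution of $L^2_k$-norm exactly $R$, contradicting a compactness-based a priori bound for sufficiently large $R$.

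The chief technical obstacle is the base-direction uniformity of the two a priori bounds mentioned above; however, these estimates (on Seiberg--Witten solutions over $X_b$ and on finite-energy Manolescu trajectories on $Y$) depend continuously on the metric $g_{E_b}$ and on $\widehat A_b$, and $(B\Z_p)_n$ is compact, so a single uniform constant suffices. Everything else is a fiberwise repetition of \cite{Ma03, Kha15} made possible by the Borel-type identification $\Phi$.
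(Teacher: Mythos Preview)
Your proposal is correct and follows essentially the same approach as the paper: the paper's proof simply observes that the compactness arguments in \cite[Lemma 2.13 and Lemma 2.14]{KT22} did not actually use the triviality of the boundary $3$-manifold family, and so carry over unchanged to the present setting. Your outline is a faithful expansion of that cited argument, with the same logical structure (prove the convergence statement, then derive (i) and (ii) by contradiction using uniform a priori bounds over the compact base $(B\Z_p)_n$).
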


\begin{proof}
Again, we did not use the triviality of the $3$-manifold bundle to ensure compactness in~\cite[Lemma 2.13 and Lemma 2.14]{KT22}; thus, that proof does work in our situation as well.
\end{proof}

\subsubsection{Construction of ${\bf BF}_E$}
By using the above lemma and \cite[Theorem~4]{Ma03} to the $S^1$-equivariant dynamical system ${\bf V}^{\mu_m}_{\lambda_m} (E_\partial)$, we may take an $S^1$-invariant Conley index $(N_m, L_m)$ of ${\bf V}^{\mu_m}_{\lambda_m} (E_\partial)$ such that 
\[
(K_{1 }, K_2 ) \subset (N_m, L_m) , 
\]
 although Manolescu assumes a manifold structure to take such a Conley index in \cite[Theorem~4]{Ma03}, his argument works for ${\bf V}^{\mu}_\lambda (E_\partial)$, which is a manifold on each fiber and $\R$-action preserves the fiber. 

Since $\R$-action preserves the fiber of $\pi: {\bf V}^{\mu_m}_{\lambda_m} (E_\partial) \to (B\Z_p)_n$, the restriction $(N_m \cap \pi^{-1} (b) , L_m\cap \pi^{-1} (b) )$ to each fiber is also an index pair.

\begin{lem}\label{choice of CI}
   One can see 
   $
   \bigcup_{b\in (B\Z_p)_n} N_m \cap \pi^{-1} (b) /  L_m\cap \pi^{-1} (b) 
   $ is fiberwisely and $S^1$-equivariantly homotopy equvalent to
   \[
   (E\Z_p)_n \times_{\Z_p } (N^*_m/  L_m^*), 
   \]
where a pair $(N^*_m,L_m^*)$ is an $S^1\times \Z_p$-equivariant index pair of the dynamical system on a fiber ${ V}^{\mu_m}_{\lambda_m} (Y)$.
\end{lem}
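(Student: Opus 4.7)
The plan is to exhibit two fiberwise $S^1$-equivariant index pairs for the fiberwise flow on $\mathbf{V}^{\mu_m}_{\lambda_m}(E_\partial)$ and invoke the uniqueness of the Conley index in a parametrized setting. The first index pair will be $(N_m, L_m)$ restricted fiberwise, and the second will be a Borel-type construction produced from a $\Z_p \times S^1$-equivariant index pair on the model fiber $V^{\mu_m}_{\lambda_m}(Y)$.

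First, I would unpack the bundle structure. By the standing assumption, the boundary family $E_\partial$ is isomorphic to the Borel construction $E\Z_p \times_{\Z_p}(Y,\frakt)$, and the chosen fiberwise metric restricts to the Borel construction of the $\Z_p$-invariant metric $g_Y$. Therefore the finite-dimensional approximation satisfies
\[
\mathbf{V}^{\mu_m}_{\lambda_m}(E_\partial)\bigr|_{(B\Z_p)_n} \;\cong\; (E\Z_p)_n \times_{\Z_p} V^{\mu_m}_{\lambda_m}(Y),
\]
and the flow generated by $-(l+p^{\mu_m}_{\lambda_m}c)$ is defined fiberwise. Since $g_Y$ and the reference spin$^c$ connection are chosen $\Z_p$-invariantly, the model flow on $V^{\mu_m}_{\lambda_m}(Y)$ commutes with the $S^1\times\Z_p$-action, and the fiberwise flow on the total space is exactly the one induced by descent through the Borel construction.

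Next, I would apply Manolescu's equivariant Conley index construction to the $S^1\times\Z_p$-equivariant dynamical system on $V^{\mu_m}_{\lambda_m}(Y)$ to obtain an $S^1\times\Z_p$-equivariant index pair $(N^*_m, L^*_m)$ for the isolated invariant set coming from bounded trajectories. Forming the associated fiber bundle pair
\[
\bigl((E\Z_p)_n \times_{\Z_p} N^*_m,\; (E\Z_p)_n \times_{\Z_p} L^*_m\bigr) \;\longrightarrow\; (B\Z_p)_n
\]
produces a fiberwise $S^1$-equivariant index pair for the fiberwise flow, whose fiberwise quotient is exactly $(E\Z_p)_n\times_{\Z_p}(N^*_m/L^*_m)$.

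Finally, I would compare this with the given pair $(N_m, L_m)$ using the uniqueness of the Conley index. Both $(N_m\cap\pi^{-1}(b),\,L_m\cap\pi^{-1}(b))$ and the fiber of the Borel pair over $b$ are $S^1$-equivariant index pairs for the same isolated invariant set inside $V^{\mu_m}_{\lambda_m}(Y)$ (up to the twisting that is already built into both descriptions equally), so on each fiber they are $S^1$-equivariantly homotopy equivalent. The standard proof of uniqueness proceeds by enlarging both pairs inside a common isolating neighborhood and flowing to produce explicit homotopy equivalences; since the enlargements and the flow depend continuously on $b\in (B\Z_p)_n$ and respect the $\Z_p$-structure, these fiberwise equivalences assemble into a fiberwise $S^1$-equivariant homotopy equivalence between the bundle $\bigcup_b N_m\cap\pi^{-1}(b)/L_m\cap\pi^{-1}(b)$ and $(E\Z_p)_n\times_{\Z_p}(N^*_m/L^*_m)$. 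The main obstacle I anticipate is verifying that Manolescu's uniqueness argument can be promoted to a parametrized, $\Z_p$-equivariant statement over the CW complex $(B\Z_p)_n$; this is handled skeleton by skeleton, gluing fiberwise flow equivalences over the standard cell decomposition of $B\Z_p$ introduced in \Cref{sec:homotopycoherentactions}, using that the trivializations over each cell are compatible with the group action.
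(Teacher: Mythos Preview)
Your approach is correct and matches the paper's strategy of invoking Conley index uniqueness via flow maps. There is, however, one simplification you missed that makes your anticipated ``main obstacle'' disappear. You work fiberwise and then worry about assembling the individual fiber homotopy equivalences into a global one, proposing a skeleton-by-skeleton gluing over $(B\Z_p)_n$. The paper instead works globally from the start: both $(N_m,L_m)$ and $\bigl((E\Z_p)_n\times_{\Z_p} N^*_m,\ (E\Z_p)_n\times_{\Z_p} L^*_m\bigr)$ are index pairs for the dynamical system on the \emph{total space} $\mathbf{V}^{\mu_m}_{\lambda_m}(E_\partial)$, so the standard flow-based homotopy equivalence (as in Salamon, Conley, or Rybakowski--Zehnder) already gives a global map $f\colon N_m/L_m \to (E\Z_p)_n\times_{\Z_p}(N^*_m/L^*_m)$ together with a homotopy inverse and the requisite homotopies. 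The key observation is then that the $\R$-action preserves each fiber of $\pi$, so $f$, its inverse, and the homotopies---all of which are built from the flow with shifted time parameters---are automatically fiberwise and $S^1$-equivariant. This removes any need for cell-by-cell gluing.
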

\begin{proof}
    This is a fiberwise version of the independence of choices of Conley indices. A homotopy equivalence between two Conley indices is given by using the $ \R$-action. 
Since $(N_m, L_m)$ and $((E\Z_p)_n \times_{\Z_p } N^*_m,   (E\Z_p)_n \times_{\Z_p } L_m^*)$ are Conley indices of the dynamical system on ${\bf V}^{\mu_m}_{\lambda_m} (E_\partial)$, we have a homotopy equivalence 
\[
f: N_m/L_m \to (E\Z_p)_n \times_{\Z_p } N^*_m/  (E\Z_p)_n \times_{\Z_p } L_m^*
\]
which is given by the $\R$-action with shifted time parameters. This map has a representative $f: N_m \to (E\Z_p)_n \times_{\Z_p } N^*_m$ satisfying 
\[
f ( L_m ) \subset (E\Z_p)_n \times_{\Z_p } L_m^*. 
\]
For the definition of the maps, $f$, and a homotopy inverse, see \cite[Definition 6.1, Theorem 6.3, Theorem 8.1]{RS88}. Such invariance is also observed in \cite{Co78} and \cite[Section 6]{Sa85}. 
Since $\R$-action preserves each fiber, one can see that this flow map lifts to 
\[
f': \bigcup_{b\in (B\Z_p)_n} N_m \cap \pi^{-1} (b) /  L_m\cap \pi^{-1} (b) \to (E\Z_p)_n \times_{\Z_p } N^*_m/  (E\Z_p)_n \times_{\Z_p } L_m^*
\]
Note that this map $f'$ is an $S^1$-equivariant fiberwise map since $\R$-action is $S^1$-equivariant and preseves each fiber. Moreover, a homotopy inverse $g$ and homotopies between $f'\circ g$ and $\id$ ( and $g \circ f'$ and $\id$ ) are given by $\R$-actions with shifted time parameters, they are also $S^1$-equivariant fiberwise maps. This completes the proof.  
 \end{proof}
Then $\pr_{W_1^{m}}  \circ  \mathcal{F}^{\mu_{m}}|_{W_{0}^{m}}$ combined with the $S^1$ equivariant fiberwise homotopy obtained in \cref{choice of CI} induces an $S^1$-equivariant continuous map
\begin{align} \label{simplify}
{\bf BF}_E :  (W_0^m(R))^{+_{(B\mathbb{Z}_p)_n}}   \to  (W_1^m / (W_1^m \smallsetminus B(\epsilon_m, W_1^m ))) \wedge_{(B\mathbb{Z}_p)_n} (E\Z_p)_n \times _{\Z_p} {I_{\lambda_m}^{\mu_m} }
 \end{align}
 as in \cite[Section~9]{Ma03},
 where $\wedge_{(B\mathbb{Z}_p)_n}$ denotes the fiberwise smash product. 
We call this map \eqref{simplify} the {\it families relative Bauer--Furuta invariant}.

 The decomposition \eqref{decomp} implies that this map stably can be written so that 
 \begin{align*}
 \begin{split}
{\bf BF}_E :  \left(\Set{\ind D^{+}_{\widehat{A}_{b}}}_{b \in (B\Z_p)_n}\right)^{+_{(B\mathbb{Z}_p)_n}}
 \to (E\Z_p)_n \times _{\mathbb{Z}_p} {SWF(Y, \frakt, g_Y)},
  \end{split}
 \end{align*}
 where $\left\{\ind D^{+}_{\widehat{A}_{b}}\right\}_{b \in (B\Z_p)_n}$ denotes the virtual index bundle derived from the family Dirac index with Atiyah--Patodi--Singer's boundary condition. Here we consider stabilizations by real vector bundles with the trivial $S^1$ actions or complex vector bundles with the complex multiplications and used $b_2^+(X)=0$.

\begin{proof}[Proof of \cref{hc Bf inv}]
We check the conditions in \cref{hc Bf inv} of ${\bf BF}_{E}$: 
\begin{itemize}
    \item[(i)] \eqref{simplify} implies the desired property. 
    \item[(ii)] This follows from the choices of finite dimensional approximations $W_0$ and $W_1$. 
    \item[(iii)] The property of ${\bf BF}_{E}^{S^1}$ follows from \cref{linear injection}. 
    \item[(iv)] The computation of the Dirac index can be checked fiberwisely. So, it follows from \cite[Proposition 2]{Kha15}. 
\end{itemize}
This completes the proof. 
\end{proof}
In our situation, we will use a negative-definite 4-manifold cobordism from $-Y$ to the empty set and a certain family of it. We shall consider $Y = -\Sigma(a_1, \dots, a_n)$ in this notation for the proof of the main theorem. Therefore, we need to regard the map \eqref{simplify} as a map from $(E\Z_p)_n \times SWF(-Y, \mathfrak{t})$ to a fiberwise Thom space of a bundle. So, we shall take the fiberwise wedge product with $SWF(-Y, \mathfrak{t}, g)$ and compose the Borel construction of the equivariant duality map 
\[
\epsilon : SWF(Y, \mathfrak{t}, g) \wedge SWF(-Y, \mathfrak{t}, g)  \to S^{-k(D)\C}, 
\]
where $k(D)$ means the dimension of the kernel of the Dirac operator and obtain 
\begin{align}\label{dagger version}
    \left(\Set{\ind D^{+}_{\widehat{A}_{b}}}_{b \in B}\right)^{+_{(B\mathbb{Z}_p)_n}} \wedge_{(B\mathbb{Z}_p)_n} (E\Z_p)_n \times _{\Z_p} {SWF(-Y, \frakt, g_Y)}
 \to {\bf S^0} 
\end{align}
which we will write by ${\bf BF}^\dagger_{E}$, where ${\bf S^0} $ denotes the trivial $S^0$ bundle over $(B\mathbb{Z}_p)_n$.
Here we consider the up-side-down cobordism from $-Y$ to the empty set to define the family of Dirac indices. 
For the construction of $\epsilon$, see \cite{Baraglia-Hekmati:2024-1}. 
Since $\epsilon^{S^1}$ is degree one map,  ${\bf BF}^\dagger_{E}$ is a local map in the sense of \cref{def:local}. 


\section{Local maps from homotopy coherent finite group action }

\subsection{Review of Baraglia--Hekmati's theory}
Since our main theorem gives an inequality for the equivariant Fr\o yshov invariant, we review its construction due to Baraglia--Hekmati~\cite{Baraglia-Hekmati:2024-1}.  As before, assume that $Y$ is a rational homology $3$-sphere with a spin$^c$ structure $\mathfrak{t}$, and that there is a smooth $\Z_p$-action preserving both the orientation and the isomorphism class of $\mathfrak{t}$, where $p$ is prime. Lastly, let $g_Y$ be a $\Z_p$-invariant Rieman metric on $Y$.

As in \Cref{Bauer--Furuta invariant for homotopy coherent finite group actions} we have
\[
SWF(Y, \mathfrak{t}, g_Y)=\Sigma^{-V^0_\lambda(g_Y)}I^\mu_\lambda(g_Y). 
\]
for sufficiently large real numbers $\mu, -\lambda$.
There is a version of an equivariant Floer homology in  \cite{Baraglia-Hekmati:2024-1}:  
\[
H_{*}^{S^1\times \Z_p} (SWF(Y, \mathfrak{t})) := H_{*+2n(Y, \mathfrak{t} , g_Y)}^{S^1\times \Z_p} (SWF(Y, \mathfrak{t}, g_Y)) = H_{*+2n(Y,\mathfrak{t}, g_Y)}^{G_\mathfrak{t}} (SWF(Y, \mathfrak{t}, g_Y)). 
\]
 Then 
 $H_{*}^{S^1\times \Z_p} (SWF(Y, \mathfrak{t}))$
 is a module over the ring $H^*_{S^1\times \Z_p} := H^*_{S^1\times \Z_p}(pt; \Z_p)$.
Let us recall that 
\[
H^*_{S^1\times \Z_p} =
\Z_p[U, R, S] / (R^2)
\]
where $\deg(U) = 2$, $\deg(R) = 1$, and $\deg(S) = 2$.

\subsection{Equivariant Fr\o yshov invariant $\delta^{(p)}_i (Y)$}
We summarize definitions and properties of equivariant Fr\o yshov invariants $\delta^{(p)}_i (Y)$, which are introduced in \cite{Baraglia-Hekmati:2024-1}.

Since the equivariant Seiberg--Witten Floer homotopy type has advantageous properties from the perspective of equivariant cohomology, we summarize these properties and refer to spaces that satisfy them as $\Z_p$-SWF type spaces.
\begin{defn}[{\cite[Definition 3.6]{Baraglia-Hekmati:2024-1}}] \label{def:BH}
A space $X$ is called a \emph{space of type $\Z_p$-SWF} if $X$ is a pointed finite $(S^1 \times \Z_p)$-CW complex, such that the following holds:
\begin{itemize}
    \item[(i)] The fixed point set $X^{S^1}$ is $\Z_p$-homotopy equivalent to a sphere $V^+$, where $V$ is a real representation of $\Z_p$.
    \item[(ii)]  The action of $S^1$ is free on $X \smallsetminus X^{S^1}$. 
\end{itemize}
\noindent We call the dimension of $V$ the \emph{level} of $X$.
\end{defn}

It is confirmed that the Seiberg--Witten Floer homotopy type $SWF(Y, \mathfrak{t}, g_Y)$ (with a certain stabilization by $S^1\times \Z_p$ vector spaces) is a space of type $\Z_p$-SWF for a given rational homology 3-sphere $Y$ with a $\Z_p$-action, and with an invariant spin$^c$ structure and Riemannian metric $g_Y$ under this action. 

The inclusion of the fixed points $\iota: X^{S^1} \to X$ induces a map  
\[
\iota^*: U^{-1} \wt{H}^*_{\Z_p} (X;\mathbb{F}_p) \to U^{-1} \wt{H}^*_{\Z_p} (X^{S^1} )\cong U^{-1}  H^{\ast+\text{level of }X}_{S^1\times \Z_p}(\ast;\mathbb{F}_p).
\]
The localization theorem implies $U^{-1} \wt{H}^*_{\Z_p} (X;\mathbb{F}_p)$ is a free rank-$1$ $U^{-1}  H^*_{S^1\times \Z_p} $-module. 
 Now we recall a sequence of invariants $ \{d_{j} (X)\}$.
Suppose $p$ is an odd prime. \footnote{For $p=2$, there is a similar formula, but we focus on odd prime cases in this paper. }
The equivariant Fr\o yshov type invariant is defined by 
\[
d_{j} (X) := \min \{ i \mid \exists x \in \wt{H}^i_{\Z_p}(X;\mathbb{F}_p) ,\ \iota^*  x \equiv  S^j U^k \; \mathrm{ mod } \; \langle S^{j+1}, RS^{j+1} \rangle  \text{  for some }  k \geq 0 \}-2j .
\] 


Now, we give definitions of the invariants. 
\begin{defn}
Let $\Z_p$ act on $Y$, and let $\frakt$ be a $\Z_p$-invariant spin$^c$ structure. The invariant $d_{j}(Y, \frakt)$ is defined as
\[
d_{j} (Y, \frakt) := d_{j} (SWF(Y, \frakt, g_Y) ) - 2n(Y, \frakt, g_Y), 
\]
and the invariant $\delta^{(p)}_j(Y, \frakt)$ is is defined as
\[
\delta^{(p)}_j (Y, \frakt) := \frac{1}{2}d_{j} (Y, \frakt).
\]
\end{defn}
\noindent It is proven that $\delta^{(p)}_{j} (Y, \frakt)$ becomes eventually constant as $j$ increases~\cite[Theorem 5.2]{Baraglia-Hekmati:2024-1}.

\subsection{Family Fr\o yshov invariants }
It is convenient to formulate a family version of Fr\o yshov invariants for spaces including the Borel constructions of Seiberg--Witten homotopy types.

\begin{defn} Let $B$ be a compact connected Hausdorff space. A \emph{space of family $SWF$ type over $B$} is a finite $S^1$-CW complex $X$ with an $S^1$-invariant projection 
\[
\pi\colon X \to B
\]
along with a section $B \to X$ whose image is $S^1$-invariant. The restriction
\[
\pi^{S^1}\colon  X^{S^1} \to B
\]
is a sphere bundle obtained from the fiberwise compactification of a vector bundle over $B$ and $X \smallsetminus X^{S^1}$ has a free $S^1$-action. We call the dimension of the fiber $\pi^{S^1}$ the \emph{level} of $(X, \pi, s)$.
\end{defn}

If we are given a space of type $\Z_p$-SWF $X$, we define 
\[
X' := E\Z_p \times_{\Z_p} X 
\]
which is naturally equipped with a projection $\pi \colon X' \to B\Z_p$ and a section $s \colon B\Z_p \to X'$. Note that ${H}^i_{S^1}(X, s(B)) $ has the action of $H^*(B)$ from the projection $\pi$.

\begin{rem}
    In our setting, we see $\pi$ is actually a fibration whose fiber is $S^1$-pointed space and the section $s$ comes from the base points.  
\end{rem}

Suppose $B=(B \Z_p) ^n$. We redefine Fr\o yshov invariant for a space of family $SWF$ in this setting. 
Note that the sphere bundle 
\[
\pi^{S^1} \colon X^{S^1} \to B
\]
must come from an oriented vector bundle over $B$ since $p$ is odd.

\begin{defn}
Fix an odd prime number $p$ and let $(X, \pi, s)$ be a space of $SWF$ type over $B = (B\mathbb{Z}_p)_n$. We then define the \emph{family Fr\o yshov invariant} by
    \[
    d_{j,n } (X) := \min \{ i \mid \exists x \in {H}^i_{S^1}(X, s(B)) ,\ \iota^*  x \equiv  S^jU^k    \;\mathrm{mod} \; \langle S^{j+1}, RS^j \rangle  \text{  for some }  k \geq 0 \}  -2j, 
    \] 
    where 
    $\iota \colon (X^{S^1},  s(B)) \to (X ,  s(B))$ 
    and 
    \[
    \iota^* \colon H^*_{S^1}(X,  s(B)) \to H^*_{S^1} (X^{S^1},  s(B)) \cong \F [U, R, S]/(R^2) [Th_{X^{S^1}}],  
    \]
    where $[Th_{X^{S^1}}]$ denotes the Thom class corresponding to a real oriented vector bundle. 
Here we have used the Thom isomorphism theorem for the Thom space of an oriented bundle 
    \[
    X^{S^1}/s(B) \to B. 
    \]
\end{defn}

\begin{defn}\label{def:local} Let $(X,  \pi, s)$ and $(X',  \pi', s')$ be spaces of family $SWF$  over $B$. A \emph{local map} between them is a $S^1$-continuous map $f\colon X \to X'$ satisfying 
\begin{itemize}
    \item $\pi' \circ f = \pi$, 
    \item $f \circ s = s'$, and
    \item $f^{S^1}$ is fiberwise homotopy equivalence.  
\end{itemize}
    
\end{defn}

While we will mainly deal with spaces of family SWF, we also need a setting where its assumptions are weakened.

\begin{defn}
    A \emph{space of family pre-SWF over a base $B$} is a topological space $X$, a continuous map $p_X\colon X\rightarrow B$, and its section $s_X\colon B\rightarrow X$.
\end{defn}

\begin{defn}
   Given two spaces $X=(X,p_X,s_X)$ and $Y=(Y,p_Y,s_Y)$ of family pre-SWF over $B$, we consider the fiber product $Z=X\times_B Y$, which is the subspace of $X\times Y$ consisting of pairs $(x,y)$ satisfying $p_X(x)=p_Y(y)$; it admits a canonical map $p\colon Z\rightarrow B$ induced by $p_X$ and $p_Y$. The \emph{fiberwise wedge product} $X\wedge_B Y$ is then defined as the quotient of $Z$ by the following relations:
    \begin{itemize}
        \item $(x,y)\sim (x^\prime,y)$ whenever $p_X(x)=p_X(x^\prime)=p_Y(y)$ and $y$ is contained in the image of $s_Y$;
        \item $(x,y)\sim (x,y^\prime)$ whenever $p_X(x)=p_Y(y)=p_Y(y^\prime)$ and $x$ is contained in the image of $s_X$.
    \end{itemize}
    This space is then endowed with the structure of a family pre-SWF over $B$ with the projection map $p\colon X\wedge_B Y\rightarrow B$ induced by the map $p\colon X\times_B Y\rightarrow B$, and its section $r\colon B\rightarrow X\wedge_B Y$ defined by $s(b)=(s_X(b),s_Y(b))$.
\end{defn}
\begin{rem}
    If $X,Y$ are sphere bundles (of fiber dimension $n$ and $m$, respectively) over $B$, then $X\wedge_B Y$ is also a sphere bundle (of fiber dimension $n+m$) over $B$. 
\end{rem}

\begin{rem}
    If $X$ is a space of family pre-SWF over $B$ and $S\rightarrow B$ is a trivial $S^1$-bundle over $B$, regarded as a sapce of family pre-SWF, their fiber wedge product $X\wedge_B S$ is the `fiber (unreduced) suspension' of $X$ over $B$, i.e.\ the quotient space of $X\times I$ by the relation $(x,t)\sim (y,t)$ if $t=0,1$ and $p(x)=p(y)$.
\end{rem}

\begin{rem}
If $X,Y$ are spaces of family SWF over B, so that they are now endowed with compatible $S^1$-actions, the fiber wedge product $X\wedge_B Y$ admits a canonical $S^1$-action compatible with $p\colon X\wedge_B Y\rightarrow B$ and $r\colon B\rightarrow X\wedge_B Y$. Furthermore, the $S^1$-fixed point locus of $X\wedge_B Y$ is given by 
\[
(X \wedge_B Y)^{S^1} = X^{S^1} \wedge_B Y^{S^1},
\]
which is a sphere bundle. Hence $X\wedge_B Y$ is a space of family SWF over $B$.
\end{rem}

\begin{defn}
    Given a space $X\rightarrow B$ of family SWF over $B$, we consider its $S^1$-Borel construction, which induces a map from $B(X)=(X\times \mathbb{S}^\infty)/S^1$ to $B\times \mathbb{CP}^\infty$. The given $S^1$-invariant section $B\rightarrow X$ induces a section $B\times \mathbb{CP}^\infty \rightarrow B(X)$. So we can consider $B(X)$ as a space of family pre-SWF over $B\times \mathbb{CP}^\infty$.
\end{defn}

\begin{rem}
\label{rem:product_borel}
    Given two spaces $X,Y\rightarrow B$ of family SWF, we have
    \[
    B(X)\wedge_{B\times \mathbb{CP}^\infty} B(Y) \cong B(X\wedge_B Y)
    \]
    as spaces of family pre-SWF over $B\times \mathbb{CP}^\infty$.
\end{rem}

Given an $S^1$-equivariant orientable rank $n$ vector bundle $E\rightarrow B$, the induced bundle $B(E^+)\rightarrow B\times \mathbb{CP}^\infty$ is an orientable $n$-sphere bundle. Also, for any space $X\rightarrow B$ of family SWF, we have a canonical identification
\[
H^\ast(B(X);\mathbb{F}_p)\cong H^\ast_{S^1}(X;\mathbb{F}_p),
\]
and thus the map
\[
H^\ast(B\times \mathbb{CP}^\infty;\mathbb{F}_p)\rightarrow H^\ast(B(X);\mathbb{F}_p)
\]
induces the $H^\ast_{S^1}(B;\mathbb{F}_p)$-module structure of $H^\ast_{S^1}(X;\mathbb{F}_p)$ given by the $S^1$-invariant map $X\rightarrow B$.

\begin{thm}
\label{thm:pre-SWF_thom}
    Let $p_X\colon X\rightarrow B$ (with section $s_X$) be a space of family pre-SWF over a base $B$, where $B$ is a CW complex with finitely many cells in each dimension. Given any orientable sphere bundle $p_S\colon S\rightarrow B$ with a section $s_S\colon B\rightarrow S$, we consider the fiber wedge product $X\wedge_B S$, which is again a space of family pre-SWF. Let $p_\wedge\colon X\wedge_B S\rightarrow B$ be the map induced by $p_X$ and $s_\wedge$ be its section induced by $s_X$ and $s_S$. Furthermore, suppose that $p_X$ is a fiber bundle whose fiber is a finite CW complex. Then we have 
    \[
    H^\ast(X,s_X(B);\mathbb{F}_p)\cong H^{\ast+\dim(S)}(X\wedge_B S,s_\wedge(B);\mathbb{F}_p)
    \]
    as $H^\ast(B;\mathbb{F}_p)$-modules.
\end{thm}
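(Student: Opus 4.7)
The plan is to construct an explicit cup-product map of Thom type and verify it is an isomorphism by comparing the (relative) Serre spectral sequences of the two fiber bundles $X \to B$ and $X \wedge_B S \to B$.

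Write $n = \dim(S)$. Since $p_S \colon S \to B$ is an orientable sphere bundle with section, there is a Thom class $\tau_S \in H^n(S, s_S(B); \mathbb{F}_p)$ whose restriction to each fiber $S_b \cong S^n$ generates $\tilde H^n(S^n; \mathbb{F}_p)$. Write $\pi_X, \pi_S$ for the two projections from $E := X \times_B S$ to $X$ and $S$, and let $A_X := s_X(B) \times_B S$ and $A_S := X \times_B s_S(B)$, so that the defining quotient $E \to X \wedge_B S$ collapses each fiber of $A := A_X \cup A_S$ to a single point, giving the section image $s_\wedge(B)$. Since this quotient is a relative homeomorphism (with $A$ a suitably nice subspace), $H^*(X\wedge_B S, s_\wedge(B); \mathbb{F}_p) \cong H^*(E, A; \mathbb{F}_p)$. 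Setting $\tilde\tau := \pi_S^* \tau_S \in H^n(E, A_S; \mathbb{F}_p)$, the relative cup product defines
\[
\Phi \colon H^*(X, s_X(B); \mathbb{F}_p) \longrightarrow H^{*+n}(X\wedge_B S, s_\wedge(B); \mathbb{F}_p), \qquad \alpha \longmapsto \pi_X^*\alpha \smile \tilde\tau,
\]
which is $H^*(B; \mathbb{F}_p)$-linear by naturality of pullback and cup product.

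To prove $\Phi$ is an isomorphism, consider the relative Serre spectral sequences of $(X, s_X(B)) \to B$ and $(X \wedge_B S, s_\wedge(B)) \to B$. The hypothesis that $p_X$ is a fiber bundle with finite CW fiber $F$, combined with the sphere-bundle structure on $S$, implies that $p_\wedge$ is also a fiber bundle with finite CW fiber $F \wedge S^n$ (local trivializations of $X$ and $S$ over an open $U \subset B$ combine fiberwise and descend to the quotient). Their $E_2$-pages are
\[
E_2^{p,q}(X) = H^p\bigl(B;\, \tilde H^q(F; \mathbb{F}_p)\bigr), \qquad E_2^{p,q}(X \wedge_B S) = H^p\bigl(B;\, \tilde H^q(F \wedge S^n; \mathbb{F}_p)\bigr).
\]
By the orientability of $S$, the monodromy representation on $\tilde H^*(S^n; \mathbb{F}_p)$ is trivial, so the Künneth formula applied to the pair $(F\times S^n, F\vee S^n)$ yields an isomorphism of local systems $\tilde H^q(F\wedge S^n; \mathbb{F}_p) \cong \tilde H^{q-n}(F; \mathbb{F}_p)$. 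Under this identification, the map of $E_2$-pages induced by $\Phi$ is the identity, because fiberwise $\Phi$ reduces to cup product with the generator of $\tilde H^n(S^n; \mathbb{F}_p)$, which is the suspension isomorphism $\tilde H^q(F) \xrightarrow{\cong} \tilde H^{q+n}(F\wedge S^n)$. Since $B$ has finitely many cells in each dimension and $F$ is a finite CW complex, the $E_2$-pages are finite-dimensional in each bidegree and both spectral sequences converge strongly; the comparison theorem then yields that $\Phi$ is an isomorphism on abutments.

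The main obstacle is carefully verifying the identification of local coefficient systems, namely that orientability of $S$ combined with the Künneth formula produces a $\pi_1(B)$-equivariant isomorphism $\tilde H^q(F \wedge S^n; \mathbb{F}_p) \cong \tilde H^{q-n}(F; \mathbb{F}_p)$, and that $\Phi$ really does restrict to fiberwise suspension under this identification. Once this is in place, the remaining ingredients — that the two Serre spectral sequences converge strongly under our finiteness hypotheses and that the comparison theorem applies — are standard, and the argument closes.
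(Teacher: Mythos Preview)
Your proof is correct, but it takes a different and more elaborate route than the paper's. Both arguments set up the same map $\Phi$: cup product with the pulled-back Thom class $\tilde\tau = \pi_S^*\tau_S$, landing in $H^{*+n}(E,A) \cong H^{*+n}(X\wedge_B S, s_\wedge(B))$. The difference lies in how the isomorphism is verified.

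The paper simply observes that the projection $p\colon E = X\times_B S \to X$ is itself an orientable sphere bundle (the pullback of $S\to B$ along $p_X$) with section $s_p$ induced by $s_S$; hence the relative Thom isomorphism for this sphere bundle over $X$ gives
\[
H^*(X, s_X(B);\mathbb{F}_p) \xrightarrow{\ \cong\ } H^{*+n}\bigl(E,\ s_p(X)\cup p^{-1}(s_X(B));\mathbb{F}_p\bigr) = H^{*+n}(E,A;\mathbb{F}_p)
\]
in one stroke, and $H^*(B;\mathbb{F}_p)$-linearity is immediate from naturality. Your approach instead compares the Serre spectral sequences of the bundles $X\to B$ and $X\wedge_B S \to B$ and invokes the comparison theorem. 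This works, and your verification that $\Phi$ respects the Serre filtration and induces the fiberwise suspension on $E_2$ is sound; but it is effectively a re-proof of the Thom isomorphism by spectral-sequence means, routed through $B$ rather than through $X$. The paper's argument is shorter and does not actually need the hypothesis that $p_X$ is a fiber bundle with finite CW fiber (that hypothesis enters only in a footnoted technicality about good pairs), whereas your argument genuinely uses it to run the two Serre spectral sequences.
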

\begin{proof}
    Consider the canonical projection map $p\colon X\times_B S\rightarrow X$. Since $S$ is a sphere bundle over $B$, it follows that $p$ gives $X\times_B S$ a structure of a sphere bundle over $X$. Also, the section $s_S$ of $S$ induces a section $s_p$ of $p$. Thus we can apply relative Thom isomorphism theorem for the space pair $(X,s_X(B))$ to get an isomorphism
    \[
    H^\ast(X,s_X(B);\mathbb{F}_p) \cong H^{\ast+\dim(S)} (X\times_B S,s_p(X) \cup p^{-1}(s_X(B));\mathbb{F}_p),
    \]
    which is clearly $H^\ast(B;\mathbb{F}_p)$-linear.

    Now observe that the quotient map $X\times_B S\rightarrow X\wedge_B S$ identifies points $(x,s) \sim (x^\prime,s)$ whenever $p_X(x)=p_X(x^\prime)=p_S(y)$ and $(x,s)\sim (x,s^\prime)$ whenever $p_X(x)=p_S(s)=p_S(s^\prime)$. The subset of those points in $X\times_B S$ is entirely contained in $s_p(X) \cup p^{-1}(s_X(B))$, and its image under the quotient map is $s_\wedge(B)$. Therefore this quotient map induces an isomorphism \footnote{Technically, we need an additional step, as pairs of spaces involved are not assumed to be good pairs. But since we assumed $B$ to have finitely many cells in each dimension, we can simply prove that this map is an isomorphism in each degree by replacing $B$ with its very high dimensional skeleton.}
    \[
    H^\ast(X\wedge_B S,s_\wedge(B);\mathbb{F}_p) \xrightarrow{\cong} H^\ast(X\times_B S,s_p(X) \cup p^{-1}(s_X(B));\mathbb{F}_p),
    \]
    and thus the theorem follows.
\end{proof}

\begin{lem}
\label{lem:suspension_thom}
Let $E$ be a $S^1$-equivariant vector bundle over $B = (B\Z_p)_n$, which is the sum of a real vector bundle $E_\R$ with the trivial $S^1$-action and a complex vector bundle $E_\C$ with $S^1$ action as the complex vector bundle.  
For a space $(X, \pi, s)$ of family SWF over $B$, such that $\pi$ is a fiber bundle whose fiber is a finite CW complex, we have 
\[
d_{j , n} ( \Sigma^E X ) =  \dim_\R E_\C + d_{j,n}(X)
\]
for any $j$ and $n$. 
\end{lem}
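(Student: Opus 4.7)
The proof will use two ingredients: the Thom isomorphism from \Cref{thm:pre-SWF_thom} applied to the Borel construction of $\Sigma^E X = X \wedge_B E^+$, and a computation of the $S^1$-equivariant Euler class of $E_\C$ modulo the ideal $\langle S \rangle$. The plan is first to identify the fixed point locus: since $E^{S^1} = E_\R$, one has $(\Sigma^E X)^{S^1} = \Sigma^{E_\R} X^{S^1}$, so the level increases by $\dim_\R E_\R$. Next, applying \Cref{thm:pre-SWF_thom} to the sphere bundle $B(E^+) \to B \times \mathbb{CP}^\infty$ yields an isomorphism $H^*_{S^1}(X, s(B); \mathbb{F}_p) \cong H^{*+\dim_\R E}_{S^1}(\Sigma^E X, s(B); \mathbb{F}_p)$ given by multiplication with the $S^1$-equivariant Thom class $\tau_E$.

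The key calculation is $\iota^*\tau_E$. The splitting $E = E_\R \oplus E_\C$ gives $\iota^*\tau_E = \tau_{E_\R} \cdot e_{S^1}(E_\C)$, where $e_{S^1}(E_\C) \in H^{\dim_\R E_\C}_{S^1}(B; \mathbb{F}_p)$ is the equivariant Euler class. Using the splitting principle,
\[
e_{S^1}(E_\C) = \prod_i (U + x_i) = U^{\dim_\C E_\C} + U^{\dim_\C E_\C - 1} c_1(E_\C) + \cdots + c_{\dim_\C E_\C}(E_\C).
\]
Since $H^{\mathrm{even}}((B\Z_p)_n;\mathbb{F}_p)$ is generated by powers of $S$, every Chern class $c_i(E_\C) \in H^{2i}((B\Z_p)_n;\mathbb{F}_p)$ with $i \geq 1$ lies in $\langle S\rangle$; hence $e_{S^1}(E_\C) \equiv U^{\dim_\C E_\C} \pmod{\langle S\rangle}$.

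The forward inequality $d_{j,n}(\Sigma^E X) \leq d_{j,n}(X) + \dim_\R E_\C$ then follows. Given $x$ realizing $d_{j,n}(X)$ with $\iota^* x \equiv S^j U^k$ modulo $\langle S^{j+1}, RS^j\rangle$, the Thom-shifted class $y = \tau_E \cdot x$ satisfies
\[
\iota^* y \;=\; \tau_{E_\R}\cdot e_{S^1}(E_\C) \cdot \iota^* x \;\equiv\; S^j U^{\,k+\dim_\C E_\C} \pmod{\langle S^{j+1}, RS^j\rangle},
\]
because multiplying the $\langle S\rangle$-terms of $e_{S^1}(E_\C)$ by $S^j$ lands in $\langle S^{j+1}\rangle$. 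After subtracting the level shift $\dim_\R E_\R$ from the total degree shift $\dim_\R E$ in $y$, the net change in $d_{j,n}$ is exactly $\dim_\R E_\C$. The main obstacle is the reverse inequality, which requires running this argument backwards through the inverse Thom isomorphism: any $y$ realizing $d_{j,n}(\Sigma^E X)$ is $\tau_E \cdot x$ for some $x$, and one must show that $\iota^* x$ has the required form modulo $\langle S^{j+1}, RS^j\rangle$. This reduces to inverting $e_{S^1}(E_\C)$ in $\mathbb{F}_p[U,R,S]/(R^2, S^{j+1}, RS^j)$ after localizing at $U$. The geometric series $\sum_{\ell \geq 0} \bigl(U^{-\dim_\C E_\C}(U^{\dim_\C E_\C} - e_{S^1}(E_\C))\bigr)^\ell$ terminates since the nilpotent correction vanishes modulo $\langle S^{j+1}\rangle$, yielding an inverse and completing the argument.
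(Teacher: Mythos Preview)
Your proposal follows the same approach as the paper's proof—both invoke the Thom isomorphism of \Cref{thm:pre-SWF_thom} applied to the Borel construction $B(X\wedge_B E^+)$ via \Cref{rem:product_borel}—but you supply considerably more detail. The paper simply asserts that the resulting $H^*(B\times\mathbb{CP}^\infty;\mathbb{F}_p)$-linear isomorphism ``gives the desired equality,'' whereas you explicitly track the interaction with the restriction map $\iota^*$ through the factorization $\iota^*\tau_E=\tau_{E_\R}\cdot e_{S^1}(E_\C)$, establish the congruence $e_{S^1}(E_\C)\equiv U^{\dim_\C E_\C}\pmod{\langle S\rangle}$ using that the positive-degree Chern classes of $E_\C$ lie in $\langle S\rangle$, and handle the reverse inequality by inverting the Euler class in $\mathbb{F}_p[U,R,S]/(R^2,S^{j+1},RS^j)$ after localizing at $U$. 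These computations are exactly what is needed to pass from the bare module isomorphism to the claimed shift in $d_{j,n}$, since the Thom isomorphism on the fixed locus $(\Sigma^E X)^{S^1}=\Sigma^{E_\R}X^{S^1}$ shifts by $\dim_\R E_\R$ rather than $\dim_\R E$, and the discrepancy is absorbed by the Euler-class factor. So your argument is correct and makes explicit the step the paper leaves to the reader.
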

\begin{proof}
    It suffices to prove the existence of an $H^\ast_{S^1}(B;\mathbb{F}_p)$-linear isomorphism
    \[
    H^\ast_{S^1}(X,s(B);\mathbb{F}_p)\cong H^{\ast+\dim(E)}_{S^1}(\Sigma^E X,s(B);\mathbb{F}_p).
    \]
    To see this, we note that $\Sigma^E X = X \wedge_B E^+$, and $E^+$ is an $S^1$-equivariant orientable sphere bundle over $B$. We then consider the induced spaces, $B(X)$ and $B(X\wedge_B E^+)$, which are spaces of family pre-SWF over $B\times \mathbb{CP}^\infty$. Since $E^+$ is orientable, $B(E^+)$ is also an orientable sphere bundle over $B\times \mathbb{CP}^\infty$. Hence we know from \Cref{rem:product_borel} and \Cref{thm:pre-SWF_thom} that we have an $H^\ast(B\times \mathbb{CP}^\infty;\mathbb{F}_p)$-linear isomorphism
    \[
    H^\ast(B(X),s(B);\mathbb{F}_p)\cong H^{\ast+\dim(E)}(B(X\wedge_B E^+),s(B);\mathbb{F}_p).
    \]
   This gives the desired equality. 
\end{proof}

The following is immediate from the definition. 
\begin{lem}
    Let $X,Y\rightarrow B$ be a space of family SWF of level $m$ and $n$, respectively. Suppose that there exists a local map $X\rightarrow Y$, then $m=n$. \qed
\end{lem}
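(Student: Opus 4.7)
The plan is to exploit the third condition in the definition of a local map, namely that $f^{S^1}\colon X^{S^1}\to Y^{S^1}$ is a fiberwise homotopy equivalence, and then reduce the question to the classical fact that two spheres of different dimensions are not homotopy equivalent.

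First, by the definition of a space of family SWF, the maps $\pi^{S^1}\colon X^{S^1}\to B$ and $(\pi')^{S^1}\colon Y^{S^1}\to B$ are sphere bundles, obtained respectively as the fiberwise one-point compactifications of vector bundles of rank $m$ and $n$. In particular, for every point $b\in B$, the fibers $(X^{S^1})_b$ and $(Y^{S^1})_b$ are homeomorphic to $S^m$ and $S^n$ respectively. Next, because $f$ is a local map, the restriction $f^{S^1}$ commutes with the projections to $B$, so for any $b\in B$ it restricts to a map
\[
f_b\colon (X^{S^1})_b\longrightarrow (Y^{S^1})_b.
\]
A fiberwise homotopy equivalence restricts on each fiber to an ordinary homotopy equivalence (the fiberwise homotopy inverse and fiberwise homotopies themselves restrict over $\{b\}$), so $f_b$ is a homotopy equivalence from $S^m$ to $S^n$.

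Finally, taking reduced homology with $\mathbb{F}_p$-coefficients of this homotopy equivalence forces $m=n$, since $\widetilde{H}_\ast(S^k;\mathbb{F}_p)$ is concentrated in degree $k$. This completes the proof.

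I do not anticipate any substantive obstacle: the only content of the statement is that the fiber data of the $S^1$-fixed sphere bundle is preserved, and this is built into the definition of a local map via the fiberwise homotopy equivalence clause. The one small point worth noting is that $B$ should be nonempty in order to pick such a $b$, which is implicit in the earlier conventions (e.g.\ $B=(B\mathbb{Z}_p)_n$).
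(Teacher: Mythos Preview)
Your proposal is correct and matches the paper's approach: the paper simply marks the lemma as immediate from the definition (with a \qed{} and no written argument), and what you have done is spell out precisely why---using the fiberwise homotopy equivalence $f^{S^1}$ between the fixed-point sphere bundles and comparing fiber dimensions. There is nothing to add.
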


\begin{lem}
\label{lem:ineq}
If there is a local map $f\colon X \to X'$ between spaces of family $SWF$, $(X, \pi, s)$ and $(X', \pi', s')$, over $(B\mathbb{Z}_p)_n$, then for each $j$ we have
\[
d_{j,n} (X) \leq d_{j,n} (X').
\]
\end{lem}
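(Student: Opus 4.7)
The plan is to produce a witness element for $d_{j,n}(X)$ by pulling back a witness element for $d_{j,n}(X')$ along $f^*$, using that $f$ commutes with the projections and sections, and that $f^{S^1}$ is a fiberwise homotopy equivalence of oriented sphere bundles of the same rank. The setup is the obvious commutative square of pairs
\[
\begin{CD}
(X^{S^1}, s(B)) @>{\iota}>> (X, s(B)) \\
@V{f^{S^1}}VV @VV{f}V \\
((X')^{S^1}, s'(B)) @>{\iota'}>> (X', s'(B)),
\end{CD}
\]
which yields $\iota^* \circ f^* = (f^{S^1})^* \circ (\iota')^*$ on relative $S^1$-equivariant cohomology. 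Furthermore, since $\pi' \circ f = \pi$, the pullback $f^*$ is $H^*_{S^1}(B;\mathbb{F}_p)$-linear, and in particular commutes with multiplication by the classes $U$, $R$, and $S$.

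The key technical step is to understand $(f^{S^1})^*$ on Thom classes. By assumption $X$ and $X'$ have the same level, say $m$, so $X^{S^1}/s(B)$ and $(X')^{S^1}/s'(B)$ are Thom spaces of oriented rank-$m$ real vector bundles over $B = (B\mathbb{Z}_p)_n$; since $p$ is odd, these orientations are canonical. Because $f^{S^1}$ is a fiberwise homotopy equivalence between these sphere bundles, on every fiber it restricts to a self-homotopy-equivalence of $S^m$, which necessarily has degree $\pm 1$. Consequently, under the Thom isomorphism,
\[
(f^{S^1})^* [Th_{(X')^{S^1}}] \;=\; \lambda \, [Th_{X^{S^1}}]
\]
for some unit $\lambda \in \mathbb{F}_p^\times$ (the common fiberwise degree, which is constant as $B$ is connected).

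Now suppose $d_{j,n}(X') = i - 2j$, and choose $x' \in H^i_{S^1}(X', s'(B))$ such that $(\iota')^* x' \equiv S^j U^k [Th_{(X')^{S^1}}] \pmod{\langle S^{j+1}, RS^j\rangle}$ for some $k \geq 0$. Setting $x := \lambda^{-1} f^* x' \in H^i_{S^1}(X, s(B))$ and using the $H^*_{S^1}(B;\mathbb{F}_p)$-linearity of $(f^{S^1})^*$, we compute
\[
\iota^* x \;=\; \lambda^{-1} (f^{S^1})^* (\iota')^* x' \;\equiv\; \lambda^{-1} S^j U^k \, (f^{S^1})^* [Th_{(X')^{S^1}}] \;\equiv\; S^j U^k [Th_{X^{S^1}}] \pmod{\langle S^{j+1}, RS^j\rangle}.
\]
Thus $x$ is a witness establishing $d_{j,n}(X) \leq i - 2j = d_{j,n}(X')$, as required. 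The only nontrivial ingredient is the identification of $(f^{S^1})^*$ on Thom classes as multiplication by a unit in $\mathbb{F}_p$, which is a standard consequence of orientability (from $p$ odd) and the fiberwise homotopy equivalence hypothesis.
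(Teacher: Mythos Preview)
Your proof is correct and follows the same approach as the paper's; the paper's proof is a single sentence that asserts the inequality follows from the existence of the $H^\ast(B;\mathbb{F}_p)$-linear pullback $f^\ast$ together with the levels being equal, while you have spelled out the witness-pulling argument and the key fact that $(f^{S^1})^\ast$ sends the Thom class to a unit multiple of the Thom class. The only cosmetic difference is that you make explicit the unit $\lambda$ arising from the fiberwise degree, which the paper absorbs into the identification $H^\ast_{S^1}(X^{S^1},s(B))\cong \mathbb{F}_p[U,R,S]/(R^2)[Th_{X^{S^1}}]$.
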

\begin{proof}
    Since $X$ and $X^\prime$ have the same level, the existence of an $H^\ast(B; \mathbb{F}_p)$-linear map from $H^\ast(X', s^\prime(B); \mathbb{F}_p)$ to $H^\ast(X, s(B); \mathbb{F}_p)$ implies the given inequality.
\end{proof}

\subsection{Comparison between the equivariant and the family Fr\o yshov invariants}
Now we consider the following setting. Let $X$ be a finite $(S^1 \times \Z_p)$-CW complex satisfying the following conditions.
\begin{itemize}
    \item The $S^1$-fixed point locus $X^{S^1}$ of $X$ is homeomorphic to a sphere.
    \item There exists a preferred basepoint $x\in X$ which is invariant under the $S^1 \times \Z_p$ action.
\end{itemize}
Then we get an $X$-bundle $\wt{X}$ over $B\Z_p$, via Borel construction; it also carries a fiberwise $S^1$-action, whose fixed point locus forms a sphere bundle over $B\Z_p$. It also admits a preferred $S^1$-invariant section $s:B\Z_p \rightarrow \wt{X}$ induced by the $S^1 \times \Z_p$-invariant basepoint $x$. We further assume that the pullback map
\[
U^{-1}H^\ast_{S^1 \times \Z_p}(X,\{x\})\rightarrow U^{-1}H^\ast_{S^1 \times \Z_p}(X^{S^1},\{x\}),
\]
induced by the inclusion $(X^{S^1},\{x\})\subset (X,\{x\})$, is an isomorphism; note that $H^\ast_{S^1 \times \Z_p}(X^{S^1},\{x\})$ is free of rank 1 over the ring $H^\ast(B\Z_p)$ by Thom isomorphism theorem. Then its equivariant Fr\o yshov invariants $d_{j}(X)$ (for $j\ge 0$), as in \cite{Baraglia-Hekmati:2022-1}, are well-defined, as discussed after \Cref{def:BH}. We will denote it as $d_j(\wt{X})$, as it depends only on the family $\wt{X}$ induced from the Borel construction for the $\Z_p$-action on  $X$.

For any integer $n\ge 0$, we denote by $\wt{X}_n$ the restriction of the bundle $\wt{X}$ to the subspace $(B\Z_p)_n\subset B\Z_p$. Then it admits a canonical structure of a space of family SWF over the base $(B\Z_p)_n$. Then its family Fr\o yshov invariants are well-defined; we denote them as $d_{j,n}(\wt{X}_n)$.
\begin{prop} \label{prop: stability}
    For each $j \geq 0$, there exists a constant $C_j$ such that for any $n \geq C_j$, we have $d_{j,n}(\wt{X}_n) = d_j({X})$.
\end{prop}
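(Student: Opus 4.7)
The plan is to reduce everything to a stabilization statement for equivariant cohomology, after which the equality $d_{j,n}(\wt{X}_n) = d_j(X)$ becomes a matter of comparing finitely many cohomology groups in a fixed range of degrees.

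First, I would identify the cohomological data on both sides. Since $\Z_p$ acts freely on $(E\Z_p)_n$, there is a natural isomorphism
\[
H^*_{S^1}(\wt{X}_n, s((B\Z_p)_n); \mathbb{F}_p) \cong H^*_{S^1 \times \Z_p}((E\Z_p)_n \times X, (E\Z_p)_n \times \{x\}; \mathbb{F}_p),
\]
with an analogous identification for the $S^1$-fixed locus $\wt{X}_n^{S^1} = (E\Z_p)_n \times_{\Z_p} X^{S^1}$. Under these identifications, the family restriction map $\iota^*$ coincides with the $(S^1 \times \Z_p)$-equivariant restriction, and the $\mathbb{F}_p[U, R, S]/(R^2)$-module structures on both sides agree because they are induced by the same classifying maps.

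Second, I would establish the stabilization. The $n$-skeleton $(B\Z_p)_n$ defined in Section~\ref{sec:homotopycoherentactions} has the property that the inclusion $(B\Z_p)_n \hookrightarrow B\Z_p$ induces an isomorphism on cohomology with $\mathbb{F}_p$ coefficients in degrees strictly less than $n$. Comparing the Serre spectral sequence of $X \to \wt{X}_n \to (B\Z_p)_n$ with that of $X \to \wt{X} \to B\Z_p$, one obtains that for each fixed degree $i$ the relative group $H^i_{S^1 \times \Z_p}((E\Z_p)_n \times X, (E\Z_p)_n \times \{x\}; \mathbb{F}_p)$ agrees with $H^i_{S^1 \times \Z_p}(X, \{x\}; \mathbb{F}_p)$ once $n \geq N(i)$ for a threshold $N(i)$ depending only on $i$. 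The analogous statement holds for the fixed point locus, and these isomorphisms are compatible with $\iota^*$ and with the module structure.

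Finally, I would reduce the proposition to a finite check. Set $m := d_j(X) + 2j$, the minimal degree witnessing the equivariant invariant, and let $C_j := \max_{0 \leq i \leq m} N(i)$. For $n \geq C_j$, the equivariant witness in degree $m$ transports to a family witness, giving $d_{j,n}(\wt{X}_n) \leq d_j(X)$; conversely, any family witness in a degree $i \leq m$ pulls back to an equivariant witness in the same degree, so minimality of $m$ forces $d_{j,n}(\wt{X}_n) \geq d_j(X)$. The main point requiring care is the bookkeeping of the module structure, in particular verifying that the classes $U$, $R$, $S$ at finite level are the restrictions of their stable counterparts; once this is done, the cosmetic discrepancy between the ideals $\langle S^{j+1}, RS^{j+1}\rangle$ and $\langle S^{j+1}, RS^j\rangle$ appearing in the two definitions causes no trouble, since $S^jU^k$ is even-degree while $RS^\ell$ is odd-degree, so on the relevant parity the relations reduce to congruence modulo $S^{j+1}$.
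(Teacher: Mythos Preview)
Your proposal is correct and follows essentially the same strategy as the paper: establish that the restriction maps $H^k_{S^1}(\wt{X},\mathrm{Im}(s)) \to H^k_{S^1}(\wt{X}_n,\mathrm{Im}(s))$ (and their fixed-point analogues) are isomorphisms in a fixed range of degrees, then transport witnesses in both directions via the commuting square of $\iota^\ast$ and $(i_n)^\ast$. The paper obtains the stabilization more directly from the observation that $B\Z_p$ is built from $(B\Z_p)_n$ by attaching cells of dimension $\ge n+1$ (and a Mayer--Vietoris argument), whereas you invoke a Serre spectral sequence comparison; both routes work. Your parity remark resolving the cosmetic mismatch between the ideals $\langle S^{j+1},RS^{j+1}\rangle$ and $\langle S^{j+1},RS^j\rangle$ is a point the paper leaves implicit.
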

\begin{proof}
    Observe that $B\Z_p$ is formed from $(B\Z_p)_n$ by attaching cells of dimension at least $n+1$. Thus, it follows from Mayer-Vietoris sequence (for relative $S^1$-equivariant cohomology) that the map
    \[
    (i_n)^\ast\colon H^k_{S^1}(\wt{X},\mathrm{Im}(s))\rightarrow H^k_{S^1}(\wt{X}_n,\mathrm{Im}(s))
    \]
    induced by the inclusion $i_n\colon \wt{X}_n \subset \wt{X}$ is an isomorphism for any $k<n$. Following the same logic, the following map would also be an isomorphism:
    \[
    (i_n)^\ast \colon H^k_{S^1}(\wt{X}^{S^1},\mathrm{Im}(s))\rightarrow H^k_{S^1}(\wt{X}^{S^1}_n,\mathrm{Im}(s)).
    \]
    
    Now let $k=d_j(X)$ and fix a very large integer $n>0$ such that the above two maps become isomorphisms. Then there exists a cohomology class $x\in H^k(\wt{X},\mathrm{Im}(s))$ such that $i^\ast(x)=U^i S^j \pmod{\langle S^{j+1},RS^j \rangle}$. Then we consider following commutative square.
    \[
    \xymatrix{
    H^k_{S^1} (\wt{X},\mathrm{Im}(s)) \ar[r]^{(i_n)^\ast} \ar[d]^{i^\ast} & H^k_{S^1}(\wt{X}_n,\mathrm{Im}(s)) \ar[d]^{i^\ast} \\
    H^k_{S^1}(\wt{X}^{S^1},\mathrm{Im}(s)) \ar[r]^{(i_n)^\ast} & H^k_{S^1}(\wt{X}^{S^1}_n ,\mathrm{Im}(s))
    }
    \]
    Since we assumed $n$ to be sufficently large, we may assume that $k<n$, so that the maps $(i_n)^\ast$ on the top and bottom of the diagram are isomorphisms. Then we see that the cohomology class $(i_n)^\ast(x)$ also satisfies $i^\ast(x)=U^i S^j \pmod{\langle S^{j+1},RS^j \rangle}$, and thus we get
    \[
    d_{j,n}(\wt{X}_n) \le k = d_j(X).
    \]
    Now denote $k^\prime = d_{j,n}(\wt{X}_n)$; we know that $k^\prime \le k < n$, so we also have the following commutative square.
    \[
    \xymatrix{
    H^{k^\prime}_{S^1} (\wt{X},\mathrm{Im}(s)) \ar[r]^{(i_n)^\ast} \ar[d]^{i^\ast} & H^{k^\prime}_{S^1}(\wt{X}_n,\mathrm{Im}(s)) \ar[d]^{i^\ast} \\
    H^{k^\prime}_{S^1}(\wt{X}^{S^1},\mathrm{Im}(s)) \ar[r]^{(i_n)^\ast} & H^{k^\prime}_{S^1}(\wt{X}^{S^1}_n ,\mathrm{Im}(s))
    }
    \]
    By assumption, there exist a cohomology class $x^\prime \in H^{k^\prime}_{S^1}(\wt{X}_n,\mathrm{Im}(s))$ satisfying $i^\ast(x^\prime)=U^i S^j \pmod{\langle S^{j+1},RS^j \rangle}$. Since $(i_n)^\ast$ is an isomorphism, we can consider the cohomology class $x^\prime_0((i_n)^\ast)^{-1}(x^\prime)$; it also satisfies $i^\ast(x^\prime_0)=U^i S^j \pmod{\langle S^{j+1},RS^j \rangle}$, and thus we get
    \[
    d_j(X) \le k^\prime = d_{j,n}(\wt{X}_n).
    \]
    Therefore we deduce that $d_{j,n}(\wt{X}_n)=d_j(X)$ for all sufficiently large $n$, as desired.
\end{proof}

It follows from \Cref{prop: stability} that for any $j\ge 0$, we have 
\[
\lim_{n\rightarrow\infty} d_{j,n}(\wt{X}_n) =d_j(\wt{X})=d_j(X).
\]
Now, we consider a $\Z_p$-action on a rational homology 3-sphere $Y$ that preserves a spin$^c$ structure $\mathfrak{t}$ on $Y$. Take $\Z_p$-invariant Riemann metric $g_Y$.  Then we have a space of family SWF defined by 
\[
{\bf I}^\nu _\lambda (Y) := E\Z_p \times_{\Z_p} I^\nu _\lambda (Y) \to B\Z_p. 
\]
Then, we define a family Fr\o yshov invariant by 
\[
{\bf d}_{j,n} ( Y, \mathfrak{t} ) := d_{j,n}  ({\bf I}^\nu _\lambda (Y)_n) - 2 n (Y, \mathfrak{t} , g_Y)+  \dim V^0_\lambda(g_Y), 
\]
where ${\bf I}^\nu _\lambda (Y)_n$ is the restriction of  ${\bf I}^\nu _\lambda (Y)$ to the subspace $(B\Z_p)_n$.
One can see ${\bf d}_{j,n} ( Y, \mathfrak{t} )$ is independent of the choices of $\nu$, $\lambda$, and $g_Y $ from \cref{lem:suspension_thom} of $|\nu|$, $|\lambda|$ are sufficiently large as it is checked in \cite{Baraglia-Hekmati:2024-1}. \cref{prop: stability}  immediately gives the following corollary:
\begin{cor}\label{equality} Let $Y$ be a rational homology sphere with a $\mathbb{Z}_p$-action, and let $\frakt$ be a $\Z_p$-invariant spin$^c$ structure.  For any $j\in \Z_{\geq 0}$, there exists $N_j>0$ such that for any $n \geq N_j$,  we have 
    \[\pushQED{\qed}
   \frac{1}{2}{\bf d}_{j,n} ( Y, \mathfrak{t} ) = \frac{1}{2}{ d}_{j} ( Y, \mathfrak{t} ) =  \delta_j^{(p)} (Y, \frakt).  \qedhere
    \]
\end{cor}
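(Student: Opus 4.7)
The plan is to deduce this corollary immediately from \Cref{prop: stability}.

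First I would take $X = I^\mu_\lambda(Y)$ for sufficiently large $|\mu|$ and $|\lambda|$, equipped with its natural $(S^1\times \Z_p)$-action from the Baraglia--Hekmati construction, and verify that $X$ satisfies the hypotheses of \Cref{prop: stability}: it is a finite $(S^1\times\Z_p)$-CW complex of type $\Z_p$-SWF (so its $S^1$-fixed locus is a representation sphere and a canonical $(S^1 \times \Z_p)$-invariant basepoint is given by the collapsing point of the Conley index), and the localization map
\[
U^{-1}H^*_{S^1\times \Z_p}(X,\{x\}) \longrightarrow U^{-1}H^*_{S^1\times \Z_p}(X^{S^1},\{x\})
\]
is an isomorphism by the standard Borel localization theorem. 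Moreover, the Borel construction $\wt X$ of this $X$ over $B\Z_p$ is, by definition, the family ${\bf I}^\mu_\lambda(Y)\to B\Z_p$ used to define ${\bf d}_{j,n}$.

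Applying \Cref{prop: stability} to this $X$ then produces an integer $N_j$ such that for every $n\geq N_j$,
\[
d_{j,n}\bigl({\bf I}^\mu_\lambda(Y)_n\bigr) \;=\; d_j\bigl(I^\mu_\lambda(Y)\bigr).
\]

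To finish, I would substitute this equality into the defining formula
\[
{\bf d}_{j,n}(Y,\frakt) = d_{j,n}({\bf I}^\mu_\lambda(Y)_n) - 2n(Y,\frakt,g_Y) + \dim V^0_\lambda(g_Y)
\]
and compare with $d_j(Y,\frakt) = d_j(SWF(Y,\frakt,g_Y)) - 2n(Y,\frakt,g_Y)$, using the identification $SWF(Y,\frakt,g_Y) = \Sigma^{-V^0_\lambda(g_Y)}I^\mu_\lambda(g_Y)$ together with the non-family analog of \Cref{lem:suspension_thom} to convert $d_j(SWF(Y,\frakt,g_Y))$ into $d_j(I^\mu_\lambda(Y)) - \dim V^0_\lambda(g_Y)$. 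The two correction terms then cancel, yielding ${\bf d}_{j,n}(Y,\frakt) = d_j(Y,\frakt)$ for all $n\geq N_j$. Halving and recalling the definition $\delta_j^{(p)}(Y,\frakt) := \tfrac{1}{2}d_j(Y,\frakt)$ gives the desired chain of equalities. The only nontrivial ingredient is \Cref{prop: stability} itself; the verification of its hypotheses for $I^\mu_\lambda(Y)$ and the bookkeeping of correction terms are routine, so I do not anticipate any real obstacle here.
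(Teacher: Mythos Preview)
Your proposal is correct and follows exactly the route the paper takes: the paper simply states that \Cref{prop: stability} ``immediately gives the following corollary'' and marks it with a \qed, and you have filled in the routine verification of the hypotheses for $X = I^\mu_\lambda(Y)$ together with the suspension bookkeeping that the paper leaves implicit. The only thing to watch is that the sign of the $\dim V^0_\lambda(g_Y)$ correction in the paper's definition of ${\bf d}_{j,n}$ be matched consistently with the desuspension convention for $SWF(Y,\frakt,g_Y)=\Sigma^{-V^0_\lambda}I^\mu_\lambda$, but this is a matter of tracking the paper's conventions rather than a gap in your argument.
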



\subsection{Fr\o yshov type inequality from homotopy coherent action}\label{Fryshov type inequality from homotopy coherent action}

The following is the general theorem in this paper. 

\begin{thm}\label{Froyshov ineq}
Let $(X, \mathfrak{s})$ be a $4$-dimensional negative-definite spin$^c$ cobordism from a rational homology $3$-sphere $(Y,\mathfrak{t})$ to the empty set. 
If we have a homotopy coherent $\Z_p$ action on $X$ whose restriction to the boundaries are strict action and its homology monodromy is trivial, then we have 
    \[
    \delta_j^{(p)} (Y, \mathfrak{t}) + \frac{c_1(\mathfrak{s} )^2 - \sigma (X)}{8} \leq 0 
    \]
    for any $j\in \Z_{\geq 0}$. 
\end{thm}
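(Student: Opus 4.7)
The plan is to apply the local map ${\bf BF}^\dagger_E$ constructed at the end of \cref{Bauer--Furuta invariant for homotopy coherent finite group actions}, together with the Fr\o yshov-type monotonicity of \cref{lem:ineq}. Since $X$ is negative-definite, $b_2^+(X) = 0$, which is the input hypothesis of the construction in \cref{subsec:BF_boundary}. The assumption that the boundary action is strict, together with the triviality of the homology monodromy, guarantees through \cref{S1,existence of lift} that the homotopy coherent family spin$^c$ structure admits a lift, so the invariant of \cref{hc Bf inv} and hence ${\bf BF}^\dagger_E$ are well defined.

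Fix a sufficiently large $n$. Restricting ${\bf BF}^\dagger_E$ to the $n$-skeleton $(B\Z_p)_n$ gives a local map
\[
{\bf BF}^\dagger_E \colon \left(\{\ind D^{+}_{\widehat{A}_{b}}\}_{b\in (B\Z_p)_n}\right)^{+_{(B\Z_p)_n}} \wedge_{(B\Z_p)_n} (E\Z_p)_n \times_{\Z_p} SWF(Y,\frakt,g_Y) \longrightarrow {\bf S^0}
\]
of spaces of family SWF over $(B\Z_p)_n$. Since the trivial bundle ${\bf S^0}$ satisfies $d_{j,n}({\bf S^0}) = 0$ directly from the definition (the minimal $i$ admitting a class mapping to $U^0 S^j$ is $i=2j$), \cref{lem:ineq} yields $d_{j,n}(\text{source of }{\bf BF}^\dagger_E) \leq 0$.

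To identify the left-hand side with the desired quantity, I apply \cref{lem:suspension_thom} to strip off the complex Dirac-index Thom factor, which contributes $2\,\ind_\C D^{+}_{\widehat{A}_b}$. By condition (iv) of \cref{hc Bf inv} and the APS index formula, this real shift equals $(c_1(\fraks)^2-\sigma(X))/4$ plus a correction involving $n(Y,\frakt,g_Y)$ and $\dim_{\C}V^{0}_{\lambda}(\C)$. Applying \cref{lem:suspension_thom} once more to handle the desuspension in $SWF(Y,\frakt,g_Y)=\Sigma^{-V^0_\lambda(g_Y)}I^\mu_\lambda(g_Y)$, and combining with the definition
\[
{\bf d}_{j,n}(Y,\frakt) = d_{j,n}\bigl({\bf I}^\mu_\lambda(Y)_n\bigr) - 2n(Y,\frakt,g_Y) + \dim V^0_\lambda(g_Y),
\]
I find that the metric-dependent contributions cancel, so the bound $d_{j,n}(\text{source}) \le 0$ becomes
\[
\frac{c_1(\fraks)^2 - \sigma(X)}{4} + {\bf d}_{j,n}(Y,\frakt) \leq 0.
\]
Dividing by $2$ and passing to the limit $n \to \infty$ via \cref{equality}, which identifies $\tfrac{1}{2}{\bf d}_{j,n}(Y,\frakt)$ with $\delta^{(p)}_j(Y,\frakt)$ for all sufficiently large $n$, yields the desired inequality.

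The main obstacle is not conceptual but the bookkeeping: the APS correction $n(Y,\frakt,g_Y)$ and the spectral-section dimension $\dim V^0_\lambda(g_Y)$ each depend on the auxiliary Riemannian metric, yet must cancel exactly when one combines the Dirac-index shift (condition (iv) of \cref{hc Bf inv}), the desuspension shift in $SWF$, and the definitions of ${\bf d}_{j,n}$ and $\delta_j^{(p)}$. This cancellation parallels the non-equivariant relative Bauer--Furuta inequalities of \cite{Ma03,Kha15}, now carried out in the $\Z_p$-family setting.
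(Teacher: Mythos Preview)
Your proposal is correct and follows essentially the same approach as the paper: lift the family $\mathrm{spin}^c$ structure via \cref{existence of lift}, apply \cref{lem:ineq} and \cref{lem:suspension_thom} to the local map ${\bf BF}^\dagger_E$ of \eqref{dagger version}, and then pass to the limit using \cref{equality}. The paper's proof is more terse about the index bookkeeping you spell out, but the logical structure is identical.
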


\begin{proof}

Since the monodromy of the coherent $\Z_p$-action is trivial, we have a lifting map 
\[
B\Z_p \to B \mathrm{Aut} (X, \mathfrak{s})
\]
from \cref{existence of lift}. 
We apply \cref{lem:ineq} and \cref{lem:suspension_thom} to the fiberwise continuous map obtained in \cref{hc Bf inv} with a dual description \eqref{dagger version} and obtain 
\[
   \frac{1}{2} {\bf d}_{j,n} ( Y, \mathfrak{t} )    + \frac{c_1(\mathfrak{s} )^2 - \sigma (X)}{8} \leq 0
\]
Now, we take the limit $n\rightarrow\infty$ and use \Cref{equality} to get $    \frac{1}{2}  {\bf d}_{j,n} ( Y, \mathfrak{t} )   =    \delta_j^{(p)} (Y, \mathfrak{t})  $. The theorem follows.
\end{proof}

\section{Proof of main result}\label{Proof of main result}

The main result follows as an application of \cref{Froyshov ineq} combined with the computations of equivariant Fr\o yshov invariants done in \cite{Baraglia-Hekmati:2022-1}. We begin with the following lemma: 
\begin{lem}\label{lem:positive}
    If $M$ is a homology 3-sphere that bounds a smooth, positive-definite 4-manifold $W_+$ with $b_1(W_+) = 0$, then $\delta(M) \leq 0$. Furthermore, for any prime $p$, if $M$ is endowed with a free smooth $\mathbb{Z}_p$-action  which extends to a smooth homotopy coherent $\mathbb{Z}_p$-action on $W_+$, which has trivial homology monodromy, then $\delta^{(p)}_j(M)\le 0$ for all $j\ge 0$.
\end{lem}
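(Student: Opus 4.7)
The plan is to combine \Cref{Froyshov ineq} with a classical theorem of Elkies on characteristic vectors in positive-definite unimodular lattices. Elkies' theorem guarantees that every positive-definite unimodular lattice of rank $n$ admits a characteristic vector of norm at most $n$. Applied to the intersection form on $H^2(W_+;\Z)$, which is positive-definite and unimodular of rank $b_2(W_+)$ since $M$ is a $\Z$-homology sphere and $b_1(W_+)=0$, this produces a spin$^c$ structure $\mathfrak{s}$ on $W_+$ satisfying $c_1(\mathfrak{s})^2 \leq b_2(W_+)$. Reversing orientation, $X := -W_+$ is a smooth negative-definite spin$^c$ cobordism from $-M$ to the empty set with $\sigma(X)=-b_2(W_+)$, and the restriction of $\mathfrak{s}$ to $X$ satisfies
\[
c_1(\mathfrak{s})^2 - \sigma(X) \;=\; b_2(W_+) - c_1(\mathfrak{s})^2|_{W_+} \;\geq\; 0.
\]

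For the first (non-equivariant) statement, the classical Fr\o yshov inequality, which is the trivial-group case of \Cref{Froyshov ineq} applied to $(X,\mathfrak{s})$, yields
\[
\delta(-M) \;\leq\; -\frac{c_1(\mathfrak{s})^2 - \sigma(X)}{8} \;\leq\; 0,
\]
and the standard orientation-reversal duality $\delta(M)=-\delta(-M)$ for Manolescu's $\delta$-invariant then gives $\delta(M)\leq 0$.

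For the second statement, the assumed homotopy coherent $\Z_p$-action on $W_+$ transfers verbatim to one on $X=-W_+$, since $\mathrm{Diff}^+(W_+)$ and $\mathrm{Diff}^+(-W_+)$ coincide as topological groups. Its restriction to $\partial X = -M$ is the original strict (and free) $\Z_p$-action on $M$, and its homology monodromy on $X$ remains trivial, so by \Cref{existence of lift} the action lifts to preserve $\mathfrak{s}$. Then \Cref{Froyshov ineq} applied to $(X,\mathfrak{s})$ combined with the Elkies bound gives $\delta^{(p)}_j(-M)\leq 0$ for every $j\geq 0$ exactly as above, and the orientation-reversal duality for the Baraglia--Hekmati invariants yields $\delta^{(p)}_j(M)\leq 0$.

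The main obstacle is not the analytic content, which is a direct application of \Cref{Froyshov ineq} together with Elkies' theorem, but rather verifying that the orientation-reversal duality used in the last step is available for the equivariant invariants $\delta^{(p)}_j$ in Baraglia--Hekmati's framework; this is either a formal consequence of their setup or has to be checked at the level of Seiberg--Witten Floer homotopy types and their $S^1\times \Z_p$-equivariant dualities.
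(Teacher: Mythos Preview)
Your core strategy—combine \Cref{Froyshov ineq} with Elkies' theorem—is exactly the paper's. The difference is that the paper applies the inequality directly to obtain $\delta^{(p)}_j(M)\le 0$, whereas you pass through $-M$ and then invoke an orientation-reversal duality. That detour introduces genuine errors.

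First, a sign mistake in the non-equivariant step: from $\delta(-M)\le 0$ together with $\delta(M)=-\delta(-M)$ one concludes $\delta(M)\ge 0$, not $\delta(M)\le 0$. Second, and more seriously, the equivariant identity you would need, namely $\delta^{(p)}_j(-M)=-\delta^{(p)}_j(M)$, does not hold. In Baraglia--Hekmati's setup the $\delta^{(p)}_j$ are defined by a minimum over cohomology classes; orientation reversal (Spanier--Whitehead duality of $SWF$) replaces this by a maximum-type condition and produces a \emph{different} sequence of invariants, not simply the negatives of the $\delta^{(p)}_j$. The obstacle you flagged in your last paragraph is real and cannot be circumvented along these lines.

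The fix is to drop the detour entirely. In the cobordism convention underlying \Cref{Froyshov ineq} (a cobordism from $Y$ to the empty set has $\partial X=-Y$), the manifold $-W_+$, with $\partial(-W_+)=-M$, is a negative-definite cobordism from $M$ to $\emptyset$, not from $-M$. Applying \Cref{Froyshov ineq} with $Y=M$ then gives
\[
\delta^{(p)}_j(M)+\frac{c_1(\mathfrak{s})^2-\sigma(-W_+)}{8}\le 0,
\]
and your Elkies computation (which is correct) shows the second term is nonnegative for a suitable $\mathfrak{s}$. This is precisely how the paper argues, and no duality statement for the $\delta$-invariants is ever needed.
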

\begin{proof}
    The $4$-manifold $W_+$ can be seen as a negative-definite cobordism from $M$ to the empty set, and any $\mathrm{spin}^c$ structure $\mathfrak{s}$ on $W_+$ restricts to the unique $\mathrm{spin}^c$ structure on $M$. Hence we get
    \[
   \delta(M) + \frac{c_1(\mathfrak{s})^2+b_2(W_+)}{8} \le    0
    \]
    which is the original Fr\o yshov inequality (see \cite{Fr96, Ma03} for example).  Since $H_1(M;\mathbb{Z})=0$, the intersection form of $W_+$ is unimodular, so it follows from a result of Elkies \cite{elkies1995characterization} that there exists a $\mathrm{spin}^c$ structure $\mathfrak{s}$ on $W_+$ satisfying $c_1(\mathfrak{s})^2 \ge -b_2(W_+)$. Thus we deduce that $\delta(M)\le 0$.

    In the presence of $\mathbb{Z}_p$-actions, we again view $W_+$ as a negative-definite cobordism from $M$ to the empty set, and apply \Cref{Froyshov ineq} to see that $\delta^{(p)}_j(M)\leq 0$ for all $j\ge 0$.
\end{proof}

By dualizing the arguments in the proof of \Cref{lem:positive}, we get the following lemma.

\begin{lem} \label{lem:negative}
If $M$ is a homology 3-sphere that bounds a smooth, negative-definite 4-manifold $W_-$ with $b_1(W_-) = 0$, then $\delta(M) \geq 0$.\qed
\end{lem}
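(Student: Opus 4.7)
The plan is to mirror the proof of \Cref{lem:positive} by exchanging the roles of positivity and negativity; since no equivariant refinement is asserted in the statement, no new gauge-theoretic input is needed. The quickest route is to reduce directly to \Cref{lem:positive}: the orientation reversal $-W_-$ is a smooth, compact, positive-definite $4$-manifold with $b_1(-W_-) = 0$ bounding $-M$ (which is again a homology $3$-sphere). Applying the non-equivariant half of \Cref{lem:positive} to $-W_-$ yields $\delta(-M) \le 0$, and combining this with the duality $\delta(-M) = -\delta(M)$ for the Fr\o yshov invariant gives $\delta(M) \ge 0$.

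A self-contained argument parallel to that of \Cref{lem:positive} works equally well. I would apply the original Fr\o yshov inequality \cite{Fr96, Ma03} to $W_-$ as a negative-definite filling of $M$: for any $\mathrm{spin}^c$ structure $\mathfrak{s}$ on $W_-$,
\[
\frac{c_1(\mathfrak{s})^2 + b_2(W_-)}{8} \le \delta(M).
\]
Since $H_1(M;\mathbb{Z}) = 0$, the intersection form of $W_-$ is unimodular, so Elkies's theorem \cite{elkies1995characterization} applied to this negative-definite unimodular lattice produces a characteristic vector $c$ with $c \cdot c \ge -b_2(W_-)$. Realizing $c$ as $c_1(\mathfrak{s})$ for a suitable $\mathrm{spin}^c$ structure on $W_-$ and substituting yields $\delta(M) \ge 0$.

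No substantive obstacle is expected; the argument is a purely formal dualization of the proof of \Cref{lem:positive}, and the only point requiring care is keeping sign conventions straight between the two statements, which is handled automatically by the orientation-reversal approach.
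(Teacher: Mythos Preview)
Your proposal is correct and matches the paper's approach exactly: the paper proves this lemma with nothing more than the sentence ``By dualizing the arguments in the proof of \Cref{lem:positive}'' and a \qed, and both of your arguments (orientation reversal plus $\delta(-M)=-\delta(M)$, or the direct Fr\o yshov--Elkies argument) are precisely such dualizations.
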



Now, we give a proof of \cref{thm:main}. 

\begin{proof}[Proof of \cref{thm:main}] Let $Y \neq S^3$ be a Brieskorn homology sphere, written by $Y= \Sigma(a_1, \ldots, a_n)$ with the orientation which naturally has a negative-definite resolution. 

Fix a large enough odd prime $p$ so that  $p$ does not divide $a_1 a_2 \cdots a_n$. Baraglia--Hekmati's equivariant Fr\o yshov invariants $\delta^{(p)}_j (Y)$ for the subgroup $\Z_p$ of the Seifert $S^1$-action are computed in \cite[Theorem~1.4]{Baraglia-Hekmati:2022-1} as
\begin{align}\label{eq Fr calculation}
\delta^{(p)}_\infty (Y) -\delta(Y)   = \rank HF^{red}(Y) - \rank HF^{red}(Y/\Z_p; \frakt_0)
\end{align}
for any spin$^c$ structure $ \frakt_0$ on $Y/\Z_p$, where $HF^{red}(Y)$ denotes the reduced Heegaard Floer homology.
Since we are assuming that $Y$ bounds a smooth, compact, positive-definite 4-manifold $X$ with $b_1(X) = 0$, it follows from \Cref{lem:positive} and \Cref{lem:negative} that $\delta(Y)=0$. Moreover, Baraglia--Hekmati~\cite[Theorem~1.5]{Baraglia-Hekmati:2022-1} proved that the right-hand side of \eqref{eq Fr calculation} is positive except for $\Sigma(2,3,5)$ and $\Sigma(2,3,11)$. In both cases, we have $\delta=1$, so they do not bound a positive-definite 4-manifold and hence can be ignored. Thus, we see 
\[
\delta^{(p)}_\infty (Y) = \delta^{(p)}_j(Y) >0 
\]
for a sufficiently large $j$. 

Now we will prove that the boundary Dehn twist along $Y$  has infinite order in $\pi_0(\mathrm{Diff}^+(X,\partial X))$. Suppose on the contrary that it has finite order. Then, from \Cref{extensiononcoborodism} and \Cref{rem:higherpower}, we conclude that for a sufficiently large prime $p$, there exists a homotopy coherent $\Z_p$-action on $X$ that extends the $\Z_p$-action on $Y$, with trivial homotopy monodromy. In particular, its homology monodromy is also trivial. Hence it follows from \Cref{lem:positive} that $\delta^{(p)}_j (Y) \le 0$, a contradiction.
\end{proof}


\bibliographystyle{alpha}
\bibliography{tex}

\end{document}